
\documentclass[%
	DIV=10, 
	english, 
	titlepage=false, 
	fontsize=11pt,
]{scrartcl}

\usepackage{todonotes}

\usepackage[utf8]{inputenc}
\usepackage[T1]{fontenc}
\usepackage{amsmath}
\usepackage{amssymb}
\usepackage{amsthm}
\usepackage{amsfonts}
\usepackage{dsfont}
\usepackage{babel}
\usepackage{lmodern}
\usepackage{csquotes}
\usepackage{enumitem}
\usepackage{array}
\usepackage{amsfonts}
\usepackage{mathtools}
\usepackage{esint}
\usepackage{array}
\usepackage{authblk}
\usepackage{nccmath}
\usepackage{bold-extra}
\usepackage{gensymb}
\usepackage{cite}
\usepackage{overpic}
\usepackage{placeins}

\usepackage{color}
\definecolor{LinkColor}{rgb}{0,0,1}
\definecolor{LinkColor2}{rgb}{0,0.5,0}
\definecolor{lg}{rgb}{.5,.5,.5}
\definecolor{rosso}{rgb}{0.8,0,0}

\newcommand{\tl}{\textcolor{blue}}

\usepackage[%
left=1.25in,
right=1.25in,
bottom=1.2in,
top=1in,
footskip=0.5in,
]{geometry}

\usepackage[%
pdftitle={Titel},%
pdfauthor={Autor},%
pdfcreator={LaTeX, LaTeX with hyperref and KOMA-Script},
pdfsubject={Betreff}, 
pdfkeywords={Keywords}
]{hyperref} 
\hypersetup{%
	colorlinks	=true,
	linkcolor	=LinkColor,%
	citecolor	=LinkColor2,%
	urlcolor	=LinkColor,%
}

\setkomafont{section}{\centering\Large\rmfamily\normalfont\scshape}
\setkomafont{subsection}{\large\rmfamily\normalfont\bfseries}
\setkomafont{subsubsection}{\rmfamily\normalfont\bfseries}
\rmfamily
\numberwithin{equation}{section}

\makeatletter
\renewcommand{\@seccntformat}[1]{\csname the#1\endcsname.\hspace{1ex}}
\makeatother

\newtheorem*{Abs}{Abstract}
\newtheorem{Thm}{Theorem}[section]
\newtheorem{Lem}[Thm]{Lemma}
\newtheorem{Pro}[Thm]{Proposition}
\newtheorem{Cor}[Thm]{Corollary}
\newtheorem{Def}[Thm]{Definition}

\theoremstyle{definition}
\newtheorem{Rem}[Thm]{Remark}

\makeatletter
\renewenvironment{proof}[1][\proofname]{%
	\par\pushQED{\qed}\normalfont%
	\topsep6\p@\@plus6\p@\relax
	\trivlist\item[\hskip\labelsep\bfseries#1\@addpunct{.}]%
	\ignorespaces
}{%
	\popQED\endtrivlist\@endpefalse
}
\makeatother

\makeatletter
\renewcommand\paragraph{\@startsection{paragraph}{4}{\z@}%
	{1ex \@plus1ex \@minus.2ex}%
	{-1em}%
	{\normalfont\normalsize\bfseries}}
\renewcommand\subparagraph{\@startsection{paragraph}{4}{\z@}%
	{1ex \@plus1ex \@minus.2ex}%
	{-1em}%
	{\normalfont\normalsize\itshape}}
\makeatother


\newcommand{\R}{\mathbb{R}}
\newcommand{\N}{\mathbb{N}}
\newcommand{\LL}{\mathcal{L}}
\newcommand{\HH}{\mathcal{H}}
\newcommand{\EE}{E}

\newcommand{\eps}{\varepsilon}

\newcommand{\dt}{\;\mathrm dt}
\newcommand{\dLL}{\;\mathrm d\mathcal{L}}
\newcommand{\dHH}{\;\mathrm d\mathcal{H}}

\newcommand{\intO}{\int_\Omega}

\newcommand{\del}{\partial}
\newcommand{\Grad}{\nabla}
\newcommand{\Lap}{\Delta}

\newcommand{\mres}{\mathbin{\vrule height 1.6ex depth 0pt width
0.13ex\vrule height 0.13ex depth 0pt width 1.3ex}}

\newcommand{\ov}{\overline}

\newcommand{\suchthat}{\;\ifnum\currentgrouptype=16 \middle\fi|\;}

\newcommand{\abs}[1]{\ensuremath\left|#1 \right|}
\newcommand{\bigabs}[1]{\ensuremath\big|#1 \big|}
\newcommand{\norm}[1]{\ensuremath\left\|#1 \right\|}
\newcommand{\bignorm}[1]{\ensuremath\big\|#1 \big\|}

\newcommand{\blk}{\color{black}}

\newcommand{\limse}{\underset{\varepsilon\searrow 0}{\lim\,}}
\newcommand{\limsupse}{\underset{\varepsilon\searrow 0}{\limsup\,}}
\newcommand{\liminfse}{\underset{\varepsilon\searrow 0}{\liminf\,}}

\newcommand{\ie}[1]{#1_{\varepsilon}}

\newcommand{\B}[1]{\boldsymbol{#1}}

\begin{document} 

\begin{titlepage}
	\begin{addmargin}{0.5 cm}
		\centering
		\LARGE{\scshape A phase-field version \\ of the Faber--Krahn theorem}\\
		\rmfamily\mdseries
		\vspace{0.02\paperheight} 
		\normalsize
		\textsc{Paul H\"uttl%
			\footnote{Fakul\"at f\"ur Mathematik, Universit\"at Regensburg, 93053 Regensburg, Germany. 
        	\href{mailto:paul.huettl@ur.de}{paul.huettl@ur.de},
        	\href{mailto:patrik.knopf@ur.de}{patrik.knopf@ur.de}}, 
    	Patrik Knopf%
    		\footnotemark[1] 
    	and
    	Tim Laux%
    		\footnote{Hausdorff Center for Mathematics, University of Bonn, 53115 Bonn, Germany. \href{mailto:tim.laux@hcm.uni-bonn.de}{tim.laux@hcm.uni-bonn.de}}}\\[1ex]
		\smallskip 
		\begin{center}
			\small
			\color{white}
			{
				\textit{This is a preprint version of the paper. Please cite as:} \\  
				P.~H\"uttl, P.~Knopf, and T.~Laux [Journal] (2021) \\ 
				\texttt{https://doi.org/...}
			}
		\end{center}
		\smallskip
		\small
		\begin{Abs}
		\normalfont
	    We investigate a phase-field version of the Faber--Krahn theorem based on a phase-field optimization problem introduced in Garcke et al.~\cite{GHKK} formulated for the principal eigenvalue of the Dirichlet--Laplacian. 
	    The shape, that is to be optimized, is represented by a phase-field function mapping into the interval $[0,1]$. 
	    We show that any minimizer of our problem is a radially symmetric-decreasing phase-field attaining values close to $0$ and $1$ except for a thin transition layer whose thickness is of order $\eps>0$. 
		Our proof relies on radially symmetric-decreasing rearrangements and corresponding functional inequalities.
		Moreover, we provide a $\Gamma$-convergence result which allows us to recover a 
        variant of the Faber--Krahn theorem for sets of finite perimeter in the sharp interface limit.
		\\[1ex]
		\end{Abs}
		\flushleft\textbf{Keywords.} 
		Faber--Krahn inequality, 
		radially symmetric-decreasing rearrangements, 
		phase-field models, 
		shape optimization, 
		sharp interface limit, 
		$\Gamma$-limit 
		\\[1ex]
		\textbf{AMS subject classification.}
		35P05, 
		35P15, 
		49Q10, 
		49R05. 
		\\
	\end{addmargin}
\end{titlepage}


\bigskip
\normalsize
\setlength{\parskip}{1ex}
\setlength{\parindent}{0ex}
\allowdisplaybreaks


\normalsize

\section{Introduction}\label{SEC:INTRO}

The \textit{Faber--Krahn theorem} states that the principal eigenvalue $\lambda(E)>0$ of the eigenvalue problem
\begin{subequations}
\label{EVP:RFK}
\begin{alignat}{2}
    \label{EVP:RFK:1}
    - \Lap w &= \lambda w 
    &&\quad \text{in $E$},
    \\
    \label{EVP:RFK:2}
    w &= 0
    &&\quad \text{on $\del E$}
\end{alignat}
\end{subequations}
among all open sets $E\subset \R^n$ with $|E|=1$,
becomes minimal if $E$ is a ball.
In dimension $n=2$, this result was first conjectured by Lord Rayleigh \cite{Rayleigh} and proved independently (under suitable regularity assumptions on the boundary of $E$) by Faber~\cite{Faber} and Krahn~\cite{Krahn}.
The result holds true in every dimension $n\ge 2$ for general open sets $E$ (see, e.g., \cite[Section~3.2]{Henrot}). 
In other words, if $B$ is an open ball in $\R^n$, then the estimate
\begin{align}\label{eq:FK}
    |B|^{\frac2n} \lambda(B) \leq |E|^{\frac2n}\lambda(E)
\end{align}
holds for every open set $E\subset \R^n$ (cf.~\cite{FuscoFK}).
The latter result is referred to as the \textit{Faber--Krahn inequality}.

In this paper, we prove a diffuse interface version of this celebrated result where the boundaries of such sets $E$ are approximated by a thin interfacial layer.
Phase-fields are natural candidates to relax such minimization problems for the purpose of numerical implementation in shape and topology optimization.
The diffuse interface relaxation for eigenvalue optimization problems via phase-fields was introduced by Bogosel and Oudet \cite{BogoselOudet} and Garcke et al.~\cite{GHKK}.

In the following, we consider a fixed design domain $\Omega= B_R(0)$ that is an open ball in $\R^n$ centered at the origin with a given finite radius $R>0$.
Instead of directly working with open sets $E\subset \Omega$, we replace any given set $E$ by a function $\varphi_\eps\colon \Omega \to [0,1]$, the so-called phase-field, such that the region where $\varphi_\eps\approx 1$ approximates $E$ and the region where $\varphi_\eps\approx 0$ approximates $\Omega\backslash E$. These regions are separated by a thin transition layer whose width is related to the (usually small) interface parameter $\eps>0$. For more details, we refer to Section \ref{SEC:MAIN}.

For any phase-field function $\varphi:\Omega\to[0,1]$, we consider the eigenvalue problem
\begin{subequations}
\label{EVP:INTRO}
\begin{alignat}{2}
    \label{EVP:INTRO:1}
    - \Lap w + b^\eps(\varphi) w &= \lambda w 
    &&\quad \text{in $\Omega$},
    \\
    \label{EVP:INTRO:2}
    w&= 0
    &&\quad \text{on $\del\Omega$},
\end{alignat}
\end{subequations}
where $b^\eps$ is a suitable coefficient function driving $w$ to zero in the set $\{\varphi =0\}$ (see Section~\ref{SUBSEC:ASS} and \cite{GHKK}). 
To optimize the principal eigenvalue $\lambda_1^{\eps,\varphi}$ of this eigenvalue problem, we want to minimize the cost functional
\begin{align}
    J^\eps_\gamma(\varphi) 
        = \lambda_1^{\eps,\varphi} + \gamma \EE^\eps(\varphi),
\end{align}
(inspired by \cite{BogoselOudet} and \cite{GHKK}) over a class of admissible functions $\varphi$. 
The expression $\gamma \EE^\eps(\varphi)$ is added as a regularization term in order to make the optimization problem well-posed. 
Here, the surface tension $\gamma$ is a positive constant and $\EE^\eps(\varphi)$ stands for the Ginzburg--Landau energy associated with $\varphi$ (see \eqref{DEF:EGL}). 
We point out that the Ginzburg--Landau energy can be regarded as a diffuse interface approximation of the  perimeter functional (cf. \cite{Modica-Mortola,Modica}). The idea of perimeter regularization in shape optimization was first introduced in \cite{Ambrosio}.

Our main results show that all minimizers $\varphi_\eps$ of this optimization problem are radially symmetric-decreasing functions which indeed exhibit a phase-field structure as described above (see Theorems~\ref{THM:MIN} and \ref{THM:DifInt}). 
This radial symmetry of the phase-fields is the natural analogue to the radial symmetry of the balls in the Faber--Krahn inequality. 
Combining our symmetry result with the $\Gamma$-convergence of Garcke et al.~\cite{GHKK}, which we extend to the case of homogeneous Dirichlet boundary data for the phase-field variable by exploiting arguments of Bourdin and Chambolle \cite{BourdinChambolle}, we recover a 
variant of the Faber--Krahn theorem in the framework of  
sets of finite perimeter
(see Theorem~\ref{THM:EpsCon} and Corollary~\ref{COR:FK}) in the sharp interface limit. 
However, our result states the stronger fact that the phase-field approximation exactly captures the symmetry properties of the sharp interface limit at any fixed scale $\eps>0$, see also Theorem~\ref{THM:FKDif}. To obtain the $\Gamma$-convergence in Theorem~\ref{GammaOwen} we shortly revisit the proof in \cite{Sternberg} and adapt it to the case of general potentials, allowing for a simultaneous treatment of smooth double-well and non-smooth double-obstacle potentials.
As we consider homogeneous Dirichlet boundary data, we can construct a recovery sequence in the spirit of Modica~\cite{Modica} and Sternberg~\cite{Sternberg} by using the profile resulting from the aforementioned potential and cut it off as in \cite{BourdinChambolle}.

The proof of the main result presented in Theorem~\ref{THM:MIN} relies on a symmetrization technique where the phase-fields and the corresponding eigenfunctions of \eqref{EVP:INTRO} are compared with their radially symmetric-decreasing rearrangements (which are also referred to as \textit{Schwarz rearrangements} in the literature).
These rearrangements are at the heart of the proofs~\cite{Faber,Krahn} and have led to breakthrough results such as the quantitative isoperimetric inequality~\cite{FuscoII}.
In our diffuse interface setting, we show that the principal eigenvalue $\lambda_1^{\eps,\varphi}$
and the Ginzburg--Landau energy $\EE^\eps(\varphi)$, which constitute the cost functional, are non-increasing under radially symmetric-decreasing rearrangements.
This can be used to establish our phase-field version of the Faber--Krahn theorem. 
As a byproduct, we also obtain a phase-field version of the \textit{Euclidean isoperimetric problem}, which states that among all measurable sets of fixed volume, the ball has minimal perimeter (cf.~\cite{Maggi}).

Variants of the Faber--Krahn theorem also for other types of boundary conditions have been studied from a large variety of different viewpoints. An interesting approach from the perspective of free boundary problems is given in the alternative proof of the classical Faber--Krahn inequality by Bucur and Freitas \cite{BucurFreitas}. Therein, methods developed by Alt and Caffarelli \cite{AltCafarelli} (see also \cite{BucurVelichkov} for a comprehensive overview) are exploited to analyze the free boundary $\partial\left\{ w>0\right\}$, where $w$ is an eigenfunction to the principal eigenvalue of the Dirichlet Laplacian. Furthermore, the authors do not rely on symmetric rearrangements but rather on reflection arguments to prove the radial symmetry of optimal shapes. 
For Robin-type boundary conditions, Daners \cite{Daners} established the Faber--Krahn inequality via a level-set characterization of the cost functional.
This result was further generalized by Bucur and Daners \cite{BucurDaners} for the $p$-Laplacian subject to a Robin boundary condition. In order to avoid Lipschitz regularity in the class of admissible sets Bucur and Giacomini \cite{BucurGiacomini} interpret the Faber--Krahn inequality for the Robin Laplacian as a free discontinuity problem in the space $SBV$.\\
The study of shape optimization problems for Neumann eigenvalues on the other hand dates back to the pioneering works by Szeg\H{o} \cite{Szego} and Weinberger \cite{Weinberger}, who proved the analogon of inequality \eqref{eq:FK} for the maximization of the first non-trivial Neumann eigenvalue, which thus is often referred to as the Szeg\H{o}--Weinberger inequality, see also \cite{Henrot}. Working solely with Neumann boundary conditions induces severe instabilities for general domain perturbations setting it apart from the more classical Dirichlet- and Robin-type shape optimization problems, see \cite{Bucur}. Nevertheless, the maximization of Neumann eigenvalues shows very recent progress. Bucur and Henrot \cite{BucurHenrot} proved the natural extension of the Szeg\H{o}--Weinberger inequality for the second non-trivial Neumann eigenvalue, where now the maximum is precisely attained by the union of two disjoint equal balls. The method used there, in order to overcome the non-applicability of classical $\gamma$-convergence and lack of compactness, consists in using a relaxed notion of Neumann eigenvalues in the framework of so called \emph{degenerate densities}. In this framework Bucur et al.\  \cite{BucurMartinetOudet} proved existence results for relaxed Neumann eigenvalues. We believe that our phase-field approach linked with the ideas used there is a promising way to also tackle optimization of Neumann eigenvalues in the context of sharp interface $\Gamma$-limits in the future.

An interesting open question is a quantitative version of our result, that is, the stability of the phase-field version of the Faber--Krahn inequality. 
In the sharp interface case, this question has been settled by Brasco, De~Philippis, and Velichkov~\cite{DePhilippis} who show that the deficit in \eqref{eq:FK} can be bounded from below by the squared Fraenkel asymmetry. 
This optimal result is achieved by generalizing the selection principle of Cicalese and Leonardi~\cite{Cicalese} from the context of isoperimetric problems to the case of eigenvalues. 
It should be expected that the symmetrization procedure applied in our work would result in a (suboptimal) quantitative version of our inequality, just as in the case of the sharp interface by Fusco, Maggi, and Pratelli~\cite{FuscoFK}, who rely on symmetrization and their quantitative isoperimetric inequality~\cite{FuscoII}.

Finally, let us also mention another interesting variant of the Faber--Krahn inequality, namely the Pólya--Szeg\H{o} conjecture (see~\cite[p.~159]{PS}), which states that among all planar polygons of fixed enclosed area, the regular polygon minimizes the first Dirichlet--Laplace eigenvalue. 
Only recently, some significant progress has been achieved by Bogosel and Bucur~\cite{bogosel2022polygonal}, which indicates that also here, the most symmetric configuration yields the smallest eigenvalue. This approach might lead to a computer-assisted proof of the conjecture, at least for $n$-gons with moderately small $n$.

Our paper is structured as follows. 
First we fix our notation and gather some preliminary results in Section~\ref{SEC:PRE}. 
In Section~\ref{SEC:MAIN}, we precisely formulate the problem and state our main results. 
Finally, in Section~\ref{SECT:PROOF}, we provide the proofs of our results.

\section{Preliminaries and important tools}\label{SEC:PRE}

\subsection{Notation}\label{SUBSEC:NOT}
We write $\R_0^+=[0,\infty)$ to denote the interval of non-negative real numbers.
The interval $[0,+\infty]$ is to be understood as a subset of the extended real numbers $\overline\R = \R\cup\{\pm\infty\}$, on which we use the standard convention $\pm \infty \,\cdot\, 0 = 0$. For any $n\in\N$, $\LL^n$ stands for the $n$-dimensional Lebesgue measure and $\HH^n$ denotes the $n$-dimensional Hausdorff measure. 

\subsection{Assumptions}\label{SUBSEC:ASS}
Note that in the upcoming analysis we will always choose the design domain $\Omega=B_R(0)$ that is the open ball in $\R^n$ with radius $R>0$ centered at the origin. Furthermore, the following assumptions on the potential $\psi:\R\to\R\cup\left\{+\infty\right\}$ and the coefficients $b^\eps$ are supposed to hold throughout this paper:
\begin{enumerate}[label = (A\arabic*), leftmargin=*]
    \item \label{ASS:A:1}
    $\psi\in C^2([0,1])$,
    $\psi(0)=\psi(1)=0$ and $\psi>0$ in $(0,1)$.
    \item \label{ASS:A:1b}
    The minima of $\psi$ at $0$ and $1$ are non-degenerate in the sense that for $x\in\left\{0,1\right\}$, we either have $\psi'(x)\neq 0$ or $\psi'(x)=0<\psi^{\prime\prime}(x)$.
    \item \label{ASS:A:2}
    For any $\eps>0$, the coefficient $b^\eps$ is a function
    \begin{align}
        b^\eps:[0,1] \to [0,\beta^\eps]
    \end{align}
    for some real number $\beta^\eps>0$. We demand that $b^\eps$ is continuous, strictly decreasing and surjective onto $[0,\beta^\eps]$.

    \item \label{ASS:A:3}
    The numbers $\beta^\eps=b^\eps(0)$ satisfy
    \begin{align}
        \underset{\eps\searrow 0}{\lim}\; \beta^\eps 
        = + \infty
        \quad\text{and}\quad
        \beta^\eps = o\big(\eps^{-\kappa}\big) 
        \quad\text{with}\quad
        \begin{cases}
            \kappa\in (0,1)
            &\text{if}\; n = 2, \\
            \kappa = \frac 2n 
            &\text{if}\; n\ge 3.
        \end{cases}
    \end{align}
    Moreover, there exists a limit function     
    \begin{align}
        b^0:[0,1] \to [0,+\infty]
    \end{align}
    satisfying
    \begin{itemize}
        \item $b^0\big(\tfrac 12\big)<+\infty$,
        \item $b^{\varepsilon}\to b^0$ pointwise in $[0,1]$ as $\eps\to 0$,
        \item $b^{\delta} \ge b^{\varepsilon}$ on $[0,1]$ for all $0\le\delta\le\varepsilon$.
    \end{itemize}
\end{enumerate}

\begin{figure}[ht!]
    \centering

        \begin{tikzpicture}[scale=4]
            \draw[thick,->] (0,-.02) node[below]{$0$} -- (0,1);
            \draw[thick,->] (-0.02,0) -- (1.2,0) node[below] {$\varphi$};

            \draw[thick] (-.02,{1/2/2.7/(0.6)/(0.9)}) node[left] {$\beta^{\varepsilon}$} -- (.02,{1/2/2.7/(0.6)/(0.9)});
            
            \draw[thick] (-.02,{1/2/2/(0.5)/(0.7)}) node[left] {$\beta^{\delta}$} -- (.02,{1/2/2/(0.5)/(0.7)});
            
            \draw[thick,domain=1:0.14,smooth,variable=\x,red] plot ({\x},{1/2/2/(\x+0.3)/(\x+0.4)*(1-\x*\x))}) node[right] {$b^0$};

            \draw[thick,domain=1:0,smooth,variable=\x,magenta] plot ({\x},{1/2/2/(\x+0.5)/(\x+0.7)*(1-\x*\x))}) node[right] {$b^{\delta}$};
            
            \draw[thick,domain=1:0,smooth,variable=\x,blue] plot ({\x},{1/2/2.7/(\x+0.6)/(\x+0.9)*(1-\x*\x))});
            
            \node[blue] at (.1,.17) {$b^\varepsilon$};
            
            \draw[thick] (1,-.02) node[below] {$1$} -- (1,.02);

        \end{tikzpicture}
    $\quad$
        \begin{tikzpicture}[scale=4]
            \draw[thick,->] (0,-.02) node [below] {$0$} -- (0,1);
            \draw[thick,->] (-0.02,0) -- (1.2,0) node[below] {$\varphi$};
            \draw[thick,domain=0:1,smooth,variable=\x,red] plot ({\x},{10*\x*\x*(1-\x)*(1-\x)});
            \draw[thick,domain=0:1,smooth,variable=\x,blue] plot ({\x},{4*\x*(1-\x)});    
            \node[red] at (0.5,10*0.5*0.5*0.5*0.5) [above] {$\psi(\varphi)$};
            \node[blue] at (0.5,4*0.5*0.5) [above] {$\psi(\varphi)$};
            \draw[thick] (1,-.02) node[below] {$1$} -- (1,.02);
        \end{tikzpicture}
    \caption{Sketch of a possible choice for the coefficient functions $b^\eps$ and $b^0$ (left) and the potential $\psi$ (right).}
    \label{fig:b_eps}
\end{figure}
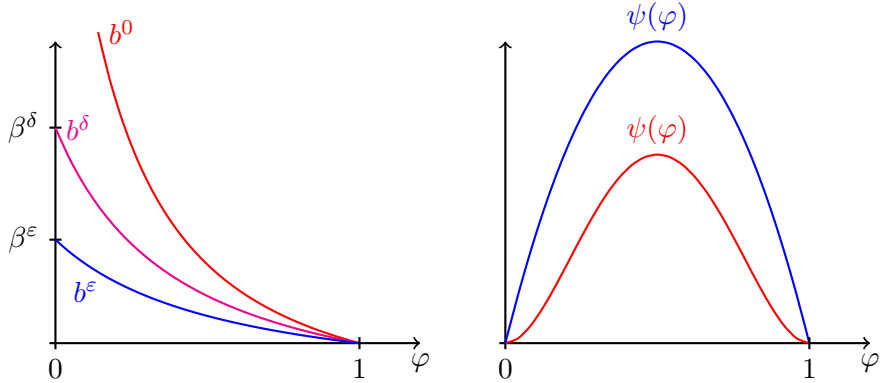

\begin{Rem}\label{Rem:Pot}
The two classical choices we have in mind for the potential $\psi$ are either the smooth quartic double-well potential $\psi(x)=\frac{1}{4}(1-x)^2x^2$
(which satisfies $\psi'(x)=0<\psi''(x)$ for $x\in\{0,1\}$)
or the non-smooth double-obstacle potential
\begin{align*}
    \psi(x)=\begin{cases}
    \frac{1}{2}(1-x)x&\quad \text{if } x\in [0,1],\\
    +\infty&\quad \text{else}.
    \end{cases}
\end{align*}
(which satisfies $\psi'(x)\neq 0$ for $x\in\{0,1\}$).
However, the assumptions \ref{ASS:A:1} and \ref{ASS:A:1b} allow for very general potentials.
In particular, asymmetric potentials satisfying $\psi^{\prime}(0)\neq 0$ and $\psi^{\prime}(1)=0<\psi''(1)$ (or vice versa) can also be included.

Note that in the case of a smooth potential as studied in \cite{Modica,Sternberg}, in contrast their theory, we do not need a growth condition as in \cite[Proposition 3(b)]{Modica} or \cite[Proposition 3]{Sternberg} since we additionally incorporate the box constraint $\varphi\in [0,1]$ in our set of admissible phase-fields. Therefore, depending on the choice of $\psi$, one of the results \cite[Proposition 3(a)]{Modica}, \cite[Remark (1.35)]{Sternberg} and \cite[Theorem 3.7]{BloweyElliott} can be applied and yields compactness of the Ginzburg--Landau energy.

The assumptions on $\psi$ in \cite[Theorem 1]{Sternberg} differ from \ref{ASS:A:1} only in the fact that global continuity is assumed. However, due to the box constraint $\varphi\in [0,1]$, our phase-fields may not leave the interval $[0,1]$ and thus, such an assumption is not necessary. 

The crucial difference between \cite{Sternberg} and \cite{BloweyElliott} is that in \cite{Sternberg}, the potentials need to satisfy \ref{ASS:A:1b} with $\psi'(x)=0<\psi''(x)$ for $x\in\{0,1\}$, whereas \cite{BloweyElliott} only covers the case $\psi'(x)\neq 0$ for $x\in\{0,1\}$. 
However, we will see that also the mixed case $\psi^{\prime}(0)\neq 0$ and $\psi^{\prime}(1)=0<\psi''(1)$ (or vice versa) can be handled by combining the proofs of \cite[Theorem 1]{Sternberg} and \cite[Proposition 3.11]{BloweyElliott}. 
This is possible since their construction of a recovery sequence remains practicable as the ODE
\begin{align}
    \label{ODE*}
    \eta^{\prime}(t)=\sqrt{2\psi(\eta(t))},
\end{align}
which is used to define the profile at the diffuse interface, possesses a global solution that is strictly increasing as long as $\eta(t)\in (0,1)$.
In \cite{Sternberg}, any solution of \eqref{ODE*}
satisfies $\eta(t)\in (0,1)$ for all $t\in\R$, whereas in \cite{BloweyElliott}, there exist $t_0,t_1\in\R$ with $t_0<t_1$ such that 
\begin{align}
    \label{PRF}
    \eta(t)
    \begin{cases}
        = 0, &\text{if $t\in(-\infty,t_0]$},\\
        \in (0,1) , &\text{if $t\in(t_0,t_1)$},\\
        = 1, &\text{if $t\in[t_1,\infty)$}.
    \end{cases}
\end{align}
In our $\Gamma$-convergence proof, we will take care of both cases simultaneously. 
Therefore, proceeding as in \cite{Sternberg}, we interpolate the solution $\eta$ of \eqref{ODE*} in such a way that the interpolated solution exhibits the behavior described in \eqref{PRF}. The solvability of \eqref{ODE*} and further properties of solutions to this ODE will be analyzed in depth in the proof of Theorem~\ref{GammaOwen}.

\end{Rem}

\subsection{Symmetric-decreasing rearrangements}\label{SUBSEC:SDR}

For functions $f:\R^n\to \R_0^+$ vanishing at infinity (i.e., the level sets ${\{x\in\R^n \,\vert\, f(x)>t\}}$ have finite Lebesgue-measure for all $t>0$), a definition of their radially symmetric-decreasing rearrangement can be found in \cite[Section~3.3]{lieb-loss}. We can easily adapt this definition to functions $f:\Omega \to \R_0^+$ where $\Omega= B_R(0)$ is an open ball in $\R^n$ with radius $R>0$ centered at the origin.

\begin{Def}\label{DEF:SDR}
Let $\Omega = B_R(0)$ be an open ball in $\R^n$ centered at the origin with a given radius $R>0$.
\begin{enumerate}[label = \textnormal{(\alph*)}, leftmargin=*]
    \item\label{DEF:SDR:SD}A measurable function ${f:\Omega\to\R}$ is called \textbf{(radially) symmetric-decreasing}, if any fixed representative of the equivalence class of $f$ satisfies the properties 
    \begin{align}
    \label{SYMMDEC}
    \left\{
    \begin{aligned}
        f(x) &= f(y) &&\quad\text{if}\quad \abs{x} = \abs{y},\\
        f(x) &\ge f(y) &&\quad\text{if}\quad \abs{x} \le \abs{y}
    \end{aligned}
    \right.
    \end{align}
    for almost all $x,y\in\Omega$. If additionally 
    \begin{align*}
        f(x) > f(y) \quad\text{if}\quad \abs{x} < \abs{y}
    \end{align*}
    for almost all $x,y\in\Omega$, then $f$ is called \textbf{strictly (radially) symmetric-decreasing}.%
    \item \label{DEF:SDR:A}
    For any measurable set $A\subseteq \Omega$ with $\LL^n(A)<\infty$, its \textbf{(radially) symmetric rearrangement} $A^*$ is defined to be the open ball centered at the origin whose volume is equal to that of $A$. This means that
    \begin{align*}
        A^* = \big\{ x \in \Omega\,:\, \abs{x} < r \big\}
        \quad\text{where $r\ge 0$ satisfies}\quad
        \LL^{n}(B^n)\,r^n
        = \LL^n(A).
    \end{align*}
    Here, $B^{n}$ denotes the $n$-dimensional unit ball.
    \item\label{DEF:SDR:B}  
    Let $f:\Omega \to \R_0^+$ be any measurable function.
    Then its \textbf{(radially) symmetric-decreasing rearrangement} $f^*$ is defined as
    \begin{align*}
        f^*(x) = \int_0^\infty \mathds{1}_{{\{ f>t \}}^*}(x) \dt
    \end{align*}
    for \textit{all} $x\in\Omega$.
\end{enumerate}
\end{Def}

\begin{Rem}\label{REM:SDR}
Let $\Omega = B_R(0)$ be an open ball in $\R^n$ centered at the origin with a given radius $R>0$.
\begin{enumerate}[label = \textnormal{(\alph*)}, leftmargin=*]
    \item\label{REM:SDR:A} It obviously holds that $\Omega^*=\Omega$, and for any measurable function $f:\Omega \to \R_0^+$, we have $f^{**} = f^*$.
    In particular, this motivates the particular choice $\Omega=B_R(0)$. 
    \blk
    \item\label{REM:SDR:B} For any measurable function $f:\Omega \to \R_0^+$, its trivial extension $f_0:\R^n \to \R_0^+$ with $f_0\vert_\Omega = f$ and $f_0\vert_{\R^n\backslash\Omega} = 0$ is measurable and naturally vanishes at infinity. In particular, we have $f_0^*\vert_\Omega = f^*$, where the symmetric-decreasing rearrangement $f_0^*$ of the extension $f_0$ is defined as in \cite[Section~3.3]{lieb-loss}.
\end{enumerate}
\end{Rem}

\bigskip

Some important properties of the symmetric-decreasing rearrangement are collected in the following lemma.

\begin{Lem}\label{LEM:SDR}
    Let $\Omega = B_R(0)$ be an open ball in $\R^n$ centered at the origin with a given radius $R>0$,
    and let $f,g:\Omega\to\R_0^+$ be arbitrary measurable functions.
    Then the following statements hold:
    \begin{enumerate}[label = \textnormal{(\alph*)}, leftmargin=*]
        \item \label{item:f* measurable} 
        $f^*$ is measurable and symmetric-decreasing. Moreover, $f^*$ is defined everywhere in $\Omega$. In particular, the condition \eqref{SYMMDEC} is satisfied \textit{everywhere} in $\Omega$.
        \item \label{item:f* level sets and p norms}
        The level sets of $f^*$ are the rearrangements of the level sets of $f$, meaning that
        \begin{align*}
            \big\{x\in\Omega : f^*(x) > t \big\}
            = \big\{x\in\Omega : f(x) > t \big\}^*
        \end{align*}
        up to a Lebesgue null set in $\R^n$. In particular, if $f\in L^p(\Omega)$ for some $p\in [1,\infty]$, it holds that $f^*\in L^p(\Omega)$ with
        \begin{align*}
            \norm{f^*}_{L^p(\Omega)} = \norm{f}_{L^p(\Omega)}.
        \end{align*}
    
        \item \label{item:f* phi}
        Let $\Phi:\R_0^+\to\R_0^+$ be a non-decreasing, lower semicontinuous function. Then it holds that
        \begin{align*}
            (\Phi\circ f)^* = \Phi\circ f^*
            \quad\text{a.e.~in $\Omega$}.
        \end{align*}
        
        \item \label{item:f* psi} 
        Let $\Psi\in C^1([0,1])$ with $\Psi(0)=0$. 
        If $0\le f\le 1$ almost everywhere in $\Omega$, it holds that
        \begin{align}
        \label{EQ:PSI}
            \intO \Psi\circ f^* \dLL^n 
            = \intO \Psi\circ f \dLL^n.
        \end{align} 
        
        \item \label{item:f* hardy littlewood}
        \textbf{Hardy--Littlewood inequality:} It holds that
        \begin{align}
        \label{HLE}
            \intO f\, g \dLL^n \le \intO f^*\, g^* \dLL^n
        \end{align}
        with the convention that when the left-hand side is infinite, then also the right-hand side is infinite.
        \item \label{item:f* nonexpansive}
        \textbf{Nonexpansivity of the rearrangement:}
        Let $F:\mathbb{R}\to\mathbb{R}_0^+$ be a convex function such that $F(0)=0$. Then 
        \begin{align*}
            \int_{\Omega}F\circ(f^*-g^*)\dLL^n
            \le \int_{\Omega}F\circ (f-g)\dLL^n.
        \end{align*}
        \item \label{item:f* polya szego}
        \textbf{Pólya--Szeg\H{o} inequality:}
        Suppose that $f\in H^1_0(\Omega;\R_0^+)$. Then, $f^*\in H^1_0(\Omega;\R_0^+)$ with
        \begin{align}
        \label{PSI}
            \intO \abs{\Grad f^*}^2 \dLL^n
            \le \intO \abs{\Grad f}^2 \dLL^n.
        \end{align}
        Moreover, if $f>0$ almost everywhere in $\Omega$ and
        \begin{align} 
        \label{COND:GRAD}
            \LL^n\big(\{x\in\Omega \,\vert\, \Grad f^*(x) = 0\}\big)=0,
        \end{align}
        then equality in \eqref{PSI} holds if and only if $f=f^*$ almost everywhere in $\Omega$.
    \end{enumerate}
\end{Lem}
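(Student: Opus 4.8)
The plan is to obtain \ref{item:f* measurable}, \ref{item:f* level sets and p norms}, \ref{item:f* phi} and \ref{item:f* hardy littlewood} by reducing to the classical theory of symmetric--decreasing rearrangements on $\R^n$ via the trivial extension of Remark~\ref{REM:SDR}\ref{REM:SDR:B}, to prove \ref{item:f* psi} directly from the layer--cake formula together with \ref{item:f* level sets and p norms}, to handle \ref{item:f* nonexpansive} by a polarization (two--point) argument, and to deduce \ref{item:f* polya szego} from the classical Pólya--Szeg\H{o} inequality and the Brothers--Ziemer characterization of its equality case.

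\textbf{The reduction.} Given measurable $f,g\colon\Omega\to\R_0^+$, their trivial extensions $f_0,g_0\colon\R^n\to\R_0^+$ vanish at infinity, satisfy $f_0^*\vert_\Omega=f^*$ and $g_0^*\vert_\Omega=g^*$ by Remark~\ref{REM:SDR}\ref{REM:SDR:B}, and are supported in $\overline\Omega=\overline{\Omega^*}$. Then \ref{item:f* measurable} follows because $\mathds 1_B(x)$ for a ball $B$ centred at the origin depends only on $|x|$ and is monotone in $|x|$, so the layer--cake integral $\int_0^\infty\mathds 1_{\{f>t\}^*}(x)\,dt$ is the length of an interval, hence defines $f^*$ at \emph{every} $x\in\Omega$ (possibly with value $+\infty$ at the origin) and makes \eqref{SYMMDEC} hold everywhere; measurability is Tonelli. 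Statement \ref{item:f* level sets and p norms} is the equimeasurability of $f$ and $f^*$ (for $p=\infty$ note $f^*(0)=\operatorname{ess\,sup}f$), \ref{item:f* phi} uses that a non-decreasing lower semicontinuous $\Phi$ has $\{\Phi>t\}=(\sigma_t,\infty)$ with $\sigma_t=\sup\{s:\Phi(s)\le t\}$, whence $\{\Phi\circ f>t\}=\{f>\sigma_t\}$ and one rearranges level sets via \ref{item:f* level sets and p norms}, and \ref{item:f* hardy littlewood} is the Hardy--Littlewood inequality: expand $f=\int_0^\infty\mathds 1_{\{f>s\}}\,ds$, $g=\int_0^\infty\mathds 1_{\{g>t\}}\,dt$ and use $\mathcal L^n(A\cap B)\le\min\{\mathcal L^n(A),\mathcal L^n(B)\}=\mathcal L^n(A^*\cap B^*)$ for concentric balls. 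All four are then the corresponding statements of \cite[Section~3.3]{lieb-loss} applied to $f_0,g_0$ (with the usual care in passing from ``level sets agree up to null sets for a.e.\ $t$'' to ``functions agree a.e.'').

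\textbf{The remaining parts.} For \ref{item:f* psi}, note $0\le f\le1$ a.e.\ implies $0\le f^*\le1$ a.e.\ by \ref{item:f* level sets and p norms}; since $\Psi\in C^1([0,1])$ with $\Psi(0)=0$, for a.e.\ $x$ we have $\Psi(f(x))=\int_0^1\Psi'(u)\,\mathds 1_{\{f>u\}}(x)\,du$ and likewise for $f^*$, so integrating over $\Omega$ and applying Fubini--Tonelli (valid as $\Psi'$ is bounded and $\mathcal L^n(\Omega)<\infty$) gives $\int_\Omega\Psi\circ f\,d\mathcal L^n=\int_0^1\Psi'(u)\,\mathcal L^n(\{f>u\})\,du$; the same identity holds for $f^*$, and $\mathcal L^n(\{f^*>u\})=\mathcal L^n(\{f>u\}^*)=\mathcal L^n(\{f>u\})$ by \ref{item:f* level sets and p norms} and Definition~\ref{DEF:SDR}\ref{DEF:SDR:A}, so the two integrals coincide. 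For \ref{item:f* nonexpansive} I would reduce, via the trivial extension, to $\R^n$ and then to the two--point level: for any halfspace $H$ the polarizations $f_H,g_H$ satisfy $\int F\circ(f_H-g_H)\le\int F\circ(f-g)$ by a pointwise case analysis (at each reflected pair the new pair of differences has the same sum as, and lies between, the old one, so convexity of $F$ with $F(0)=0$ gives the inequality termwise), and then pass to the limit along a universal sequence of iterated polarizations converging to $f^*$ and $g^*$ pointwise a.e., using $F\ge0$ and Fatou's lemma; alternatively one simply cites the classical nonexpansivity of the symmetric--decreasing rearrangement. Finally, for \ref{item:f* polya szego}, inequality \eqref{PSI} is the classical Pólya--Szeg\H{o} inequality (coarea formula plus the isoperimetric inequality, after extending $f$ by zero to $H^1(\R^n)$), and under the additional hypotheses $f>0$ a.e.\ and \eqref{COND:GRAD} the equality statement $f=f^*$ a.e.\ is exactly the Brothers--Ziemer theorem.

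\textbf{Main obstacle.} Given the cited literature, none of these is deep; the two points requiring genuine care are (i) the ``everywhere''-versus-``a.e.'' bookkeeping in \ref{item:f* measurable}--\ref{item:f* phi} when transferring between $f$, its level sets and $f^*$ on the bounded domain $\Omega$, and (ii) the equality case in \ref{item:f* polya szego}, whose proof (Brothers--Ziemer) is substantially more involved than the inequality and is precisely where the hypothesis \eqref{COND:GRAD} on the critical set of $\nabla f^*$ is needed.
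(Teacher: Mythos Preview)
Your proposal is correct and follows essentially the same route as the paper: reduction to $\R^n$ via the trivial extension of Remark~\ref{REM:SDR}\ref{REM:SDR:B} and citation of Lieb--Loss for \ref{item:f* measurable}--\ref{item:f* phi}, \ref{item:f* hardy littlewood}, \ref{item:f* nonexpansive}, and Brothers--Ziemer for \ref{item:f* polya szego}. Two minor differences are worth noting. For \ref{item:f* psi}, the paper decomposes $\Psi'=\Psi'_+-\Psi'_-$ into positive and negative parts, writes $\Psi=\Psi_1-\Psi_2$ with $\Psi_i(t)=\int_0^t(\Psi')_\pm(s)\,ds$ both non-decreasing, and then applies the equimeasurability property from \cite[Section~3.3(iv)]{lieb-loss} to each piece; your direct layer-cake/Fubini computation with $\Psi'$ bounded is an equivalent and equally clean alternative. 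For \ref{item:f* polya szego}, one point you gloss over: Brothers--Ziemer only yields that the extension $f_0$ is a \emph{translate} of $f_0^*$, not that $f=f^*$. The paper closes this by observing that since $f\in H^1_0(\Omega)$ with $f>0$ a.e.\ in the ball $\Omega=B_R(0)$, no nontrivial translate of $f^*$ can again be positive on all of $\Omega$ and have zero trace on $\partial\Omega$; this is precisely where the positivity hypothesis on $f$ itself (rather than on $f^*$) is used, and you should make this step explicit.
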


The proof is deferred to Section \ref{SECT:PROOF}.

\begin{Rem}\label{REM:PS}
    We point out that the condition that $f$ has a vanishing trace on $\del\Omega$ is actually a necessary assumption for the Pólya--Szeg\H{o} inequality (Lemma~\ref{LEM:SDR}\ref{item:f* polya szego}). In general, as the following example shows, there exist functions $f\in H^1(\Omega;\R_0^+)$ such that $f^*$ does not even belong to $H^1(\Omega;\R_0^+)$.
    
    \textbf{Counterexample to the Pólya--Szeg\H{o} inequality for functions in $H^1(\Omega;\R_0^+)$.}\\
    Let $\Omega = B_1(0)$ be the open unit ball in $\R^2$. We consider the function
    \begin{align*}
        f:\Omega\to\R_0^+,\quad x \mapsto \abs{x}\,,
    \end{align*}
    which obviously belongs to $H^1(\Omega;\R_0^+)$ but not to $H^1_0(\Omega;\R_0^+)$. 
    Its symmetric-decreasing rearrangement $f^*$ is given by
    \begin{align*}
        f^*:\Omega\to\R_0^+,\quad x \mapsto \sqrt{1-\abs{x}^2}\,.
    \end{align*}
    Hence, $f^*$ is weakly differentiable with 
    \begin{align*}
        \Grad f^*(x) = \frac{-x}{\sqrt{1-\abs{x}^2}}
        \quad\text{for all $x\in\Omega\backslash\{0\}$.}
    \end{align*}
    However, it is easy to see that the blow-up at $|x|=1$ causes
    \begin{align*}
        \int_\Omega \abs{\Grad f^*}^2 \dLL^2 
        = +\infty.
    \end{align*}
    This means that $f^*\notin H^1(\Omega;\R_0^+)$ and in particular, the Pólya--Szeg\H{o} inequality stated in \eqref{PSI} does not hold.
\end{Rem}

\subsection{Properties of functions of bounded variation}\label{Sub:BV}
We recall some basic facts on functions of bounded variation and sets of finite perimeter that will be used in the course of this paper.  We refer to~\cite{EvansGariepy, Maggi, AFP} for more details.

The space of \textbf{functions of bounded variation} in $\Omega$ with values in $\R$, also referred to as \textbf{$BV$ functions}, is defined as
\begin{align*}
        BV(\Omega)
    \coloneqq 
        \left\{u\in L^1(\Omega)\suchthat V(u,\Omega)<\infty\right\}.
\end{align*}
Here $V(u,\Omega)$ denotes the \textbf{variation} of a function $u\in L^1_{\mathrm{loc}}(\Omega)$ defined as
\begin{align*}
    V(u,\Omega)
        \coloneqq
    \sup\left\{\int_{\Omega}u\, \operatorname{div}\B{\xi}\dLL^n \suchthat \B{\xi}\in C_0^1(\Omega,\R^n),\, \norm{\B{\xi}}_{L^{\infty}(\Omega)}\le 1\right\}.
\end{align*}
Endowed with the norm
\begin{align*}
    \norm{u}_{BV(\Omega)}\coloneqq \norm{u}_{L^1(\Omega)}+V(u,\Omega),
\end{align*}
the space $BV(\Omega)$ is a Banach space. However, for practical purposes, the topology induced by this norm is too strong. For this reason, the concept of \textbf{strict convergence} is commonly used. We say that a sequence $u_k\in BV(\Omega)$ strictly converges to $u\in BV(\Omega)$ if
\begin{align*}
    u_k\to u \quad\text{\,in } L^1(\Omega)
    \quad \text{and} \quad
    V(u_k,\Omega)\to V(u,\Omega)
\end{align*}
as $k\to \infty$.

One of the fine properties of $BV$ functions is that they allow for a well-defined trace. Due to \cite[Theorem 3.87]{AFP} any $u\in BV(\Omega)$ possesses a trace 
$$u_{\vert{\del\Omega}}\in L^1\big((\partial\Omega, \HH^{n-1}\mres \partial\Omega);\R\big),$$ 
which is defined via the limit
\begin{align}\label{DEF:TR}
    \underset{\rho\searrow 0}{\lim}\;
    \rho^{-n}\int_{\Omega\cap B_{\rho}(x)}\abs{u(y)-u_{\vert{\del\Omega}}(x)}\dLL^n(y)=0.
\end{align}
for $\HH^{n-1}$-almost every $x\in\partial\Omega$.
Here, $\HH^{n-1}\mres \partial\Omega$ denotes the restriction of the Hausdorff measure $\HH^{n-1}$ to the boundary $\del\Omega$ and $L^1\big((\partial\Omega, \HH^{n-1}\mres \partial\Omega);\R\big)$ is the space of $L^1$-functions on $\del\Omega$ with respect to the measure $\HH^{n-1}\mres \partial\Omega$. 
In the following, we will simply write $L^1(\del\Omega)$ instead of 
$L^1\big((\partial\Omega, \HH^{n-1}\mres \partial\Omega);\R\big)$.
The corresponding norm on $L^1(\del\Omega)$ is given by
\begin{align}\label{L1NormDef}
    \norm{\cdot}_{L^1(\partial\Omega)}=\int_{\partial\Omega}\abs{\cdot}\dHH^{n-1}.
\end{align}
Due to \cite[Theorem 3.88]{AFP}, the operator
\begin{align*}
    BV(\Omega)&\to L^1(\partial\Omega),\\
    u&\mapsto u_{\vert{\del\Omega}},
\end{align*}
is continuous with respect to strict convergence in $BV(\Omega)$. 
    
To conclude this section we give the definition of the relative perimeter.    
The \textbf{relative perimeter in $\Omega$} of a measurable set $E\subset \R^n$ is defined as
\begin{align*}
    P_{\Omega}(E)\coloneqq V(\chi_{E},\Omega).
\end{align*} 

\pagebreak[2]

\section{Formulation of the problem and the main results}\label{SEC:MAIN}

In the following, we consider the design domain $\Omega = B_R(0) \subset \R^n$ with $n\ge 2$
and some finite radius $R>0$. 

For functions $\varphi\in L^{\infty}(\Omega, [0,1])$,
the eigenvalue problem introduced in \cite{GHKK} reads as
\begin{subequations}
\label{EVP}
\begin{alignat}{3}
    \label{EVP:EQ}
    - \Lap w + b^\eps(\varphi) w &= \lambda w 
    &&\qquad \text{in $\Omega$}, \\
    \label{EVP:BC}
    w\vert_{\del\Omega}&= 0
    &&\qquad\text{on $\del\Omega$.}
\end{alignat}
\end{subequations}
In view of the term $b^{\eps}(\varphi)w$, this problem can be understood as a phase-field approximation of the classical Dirichlet--Laplace eigenvalue problem on the shape represented by the set $\left\{\varphi=1\right\}$. For a detailed motivation for and introduction to this eigenvalue problem we refer to \cite[Section 2]{GHKK}. To specify the notion of weak solutions, eigenvalues and eigenfunctions, we recall the following definition (cf.~\cite[Section~3.1]{GHKK}).

\begin{Def}\label{DEF:EVPROBLEM}
    Let $\eps>0$ and $\varphi\in L^{\infty}(\Omega, [0,1])$ be arbitrary. 
    \begin{enumerate}[label = \textnormal{(\alph*)}, leftmargin=*]
    \item For any given $\lambda\in\R$, a function $w\in H^1_0(\Omega)$ is called a \textbf{weak solution} of the system in \eqref{EVP} if the weak formulation
    \begin{align}\label{WeakEVe}
        \intO \Big(\Grad w \cdot \Grad \eta + b^\eps(\varphi)\, w \eta \Big)\dLL^n
        = \lambda \intO w \eta \dLL^n
    \end{align}
    is satisfied for all test functions $\eta\in H^1_0(\Omega)$.
    \item A real number $\lambda^{\eps,\varphi}$ is called an \textbf{eigenvalue} associated with $\varphi$, if there exists at least one nontrivial weak solution $w^{\eps,\varphi}\in H^1_0(\Omega)$ of the eigenvalue problem \eqref{EVP} written for $\lambda = \lambda^{\eps,\varphi}$.
    
    In this case, $w^{\eps,\varphi}$ is called an \textbf{eigenfunction} associated with the eigenvalue $\lambda^{\eps,\varphi}$. 
    \end{enumerate}
\end{Def}

We further recall some important properties of the eigenvalue problem \eqref{EVP}. The results can be found in \cite{GHKK}, but are also accessible via the standard literature (see, e.g., \cite{Alt,Gilbarg-Trudinger}).

\begin{Pro}\label{PROP:EVP}
    Let $\eps>0$ and $\varphi\in L^{\infty}(\Omega, [0,1])$ be arbitrary. 
    \begin{enumerate}[label = \textnormal{(\alph*)}, leftmargin=*]
    \item \label{PROP:EVP:A}
    The eigenvalue problem \eqref{EVP} has countably many eigenvalues and each of them has a finite-dimensional eigenspace. Repeating each eigenvalue according to its multiplicity, we can write them as a sequence $\big(\lambda_k^{\eps,\varphi}\big)_{k\in\N}$ with
    \begin{align*}
        0<\lambda_1^{\eps,\varphi}
        \le\lambda_2^{\eps,\varphi}
        \le\lambda_3^{\eps,\varphi}
        \le ...
        \quad\text{and}\quad
        \lambda_k^{\eps,\varphi} \to \infty
        \;\;\text{as}\;\;
        k\to\infty.
    \end{align*}
    
    \item \label{PROP:EVP:B} 
    There exists an orthonormal basis $\big(w_k^{\eps,\varphi}\big)_{k\in\N}$ of $L^2(\Omega)$ where for every $k\in\N$, $w_k^{\eps,\varphi}$ is an eigenfunction to the eigenvalue $\lambda_k^{\eps,\varphi}$.
    
    \item \label{PROP:EVP:C}
    The eigenvalue $\lambda_1^{\eps,\varphi}$ is called the \textbf{principal eigenvalue}. It can be represented via the Courant--Fischer characterization
    \begin{align}
        \label{CHAR:CF}
        \lambda_1^{\eps,\varphi}
        = \underset{w \in H^1_0(\Omega)\backslash\{0\}}{\min}
        \frac{\intO \big(\abs{\Grad w}^2 + b^\eps(\varphi) w^2 \big)\dLL^n}{\norm{w}_{L^2(\Omega)}^2}\;.
    \end{align}
    Any function $w \in H^1_0(\Omega)\backslash\{0\}$ at which this minimum is attained is an eigenfunction to the eigenvalue $\lambda_1^{\eps,\varphi}$.
    
    Moreover, the eigenspace of $\lambda_1^{\eps,\varphi}$ is one-dimensional, and there exists a unique eigenfunction $\ov w \in H^1_0(\Omega)\backslash\{0\}$ corresponding to this eigenvalue which fulfills
    \begin{align}
    \label{COND:POSNORM}
        \ov w >0 \quad\text{a.e.~in $\Omega$},
        \quad\text{and}\quad
        \norm{\ov w}_{L^2(\Omega)}=1.
    \end{align}
    We call $\ov w$ the \textbf{positive-normalized eigenfunction}. Without loss of generality, as the choice the sign does not matter, we will always choose $w_1^{\eps,\varphi}=\ov w$ in the orthonormal basis given by part \ref{PROP:EVP:B}.
    \end{enumerate}
\end{Pro}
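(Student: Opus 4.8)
\textit{Proof proposal.} The plan is to recast the eigenvalue problem~\eqref{EVP} in terms of the spectral theory of a compact, self-adjoint, positive operator on $L^2(\Omega)$. First I would introduce the symmetric bilinear form
\[
    a_\varphi(w,\eta) := \intO \Grad w\cdot\Grad\eta + b^\eps(\varphi)\,w\,\eta \dLL^n,
    \qquad w,\eta\in H^1_0(\Omega),
\]
and observe that $b^\eps(\varphi)\in L^\infty(\Omega)$ with $0\le b^\eps(\varphi)\le\beta^\eps$ almost everywhere by~\ref{ASS:A:2}. Hence $a_\varphi$ is bounded on $H^1_0(\Omega)$, and by the Poincaré inequality on the bounded domain $\Omega$ it is coercive, $a_\varphi(w,w)\ge\norm{\Grad w}_{L^2(\Omega)}^2\ge c\,\norm{w}_{H^1_0(\Omega)}^2$ for some $c>0$. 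By Lax--Milgram, for each $f\in L^2(\Omega)$ there is a unique $Tf\in H^1_0(\Omega)$ with $a_\varphi(Tf,\eta)=(f,\eta)_{L^2(\Omega)}$ for all $\eta\in H^1_0(\Omega)$, and I would check that $T$ defines a compact operator on $L^2(\Omega)$ (composing the bounded map $L^2(\Omega)\to H^1_0(\Omega)$ with the compact embedding $H^1_0(\Omega)\emb L^2(\Omega)$, using that $\Omega$ is bounded), which is self-adjoint because $a_\varphi$ is symmetric and positive because $(Tf,f)_{L^2(\Omega)}=a_\varphi(Tf,Tf)>0$ whenever $f\neq0$.

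Then the spectral theorem for compact, self-adjoint, positive operators will furnish a nonincreasing sequence of eigenvalues $\mu_1\ge\mu_2\ge\dots>0$ with $\mu_k\to0$, each of finite multiplicity, and an $L^2(\Omega)$-orthonormal basis $(w_k)_{k\in\N}$ of corresponding eigenfunctions. Unwinding the definition of $T$, the identity $Tw_k=\mu_k w_k$ is precisely the weak formulation~\eqref{WeakEVe} with $\lambda_k^{\eps,\varphi}:=\mu_k^{-1}$; since conversely any eigenfunction of~\eqref{EVP} lies in the range of $T$ and is therefore an eigenfunction of $T$, these exhaust the spectrum, which yields parts~\ref{PROP:EVP:A} and~\ref{PROP:EVP:B}. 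For part~\ref{PROP:EVP:C}, positivity $\lambda_1^{\eps,\varphi}>0$ is immediate from coercivity, and I would derive the Courant--Fischer identity~\eqref{CHAR:CF} from the variational characterization $\mu_1=\max\{(Tf,f)_{L^2(\Omega)}:\norm{f}_{L^2(\Omega)}=1\}$ rewritten through $a_\varphi$ (equivalently, by expanding $w\in H^1_0(\Omega)$ in the basis $(w_k)$, which is orthogonal also for $a_\varphi$); a standard direct-method argument shows the infimum in~\eqref{CHAR:CF} is attained, and the first variation of the Rayleigh quotient identifies every minimizer as an eigenfunction for $\lambda_1^{\eps,\varphi}$.

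The qualitative statements in~\ref{PROP:EVP:C} are where the genuine work lies. Given any first eigenfunction $w$, I would pass to $\abs{w}\in H^1_0(\Omega)$: since $\abs{\Grad\abs{w}}=\abs{\Grad w}$ a.e.\ and $\abs{w}^2=w^2$, it has the same Rayleigh quotient and is thus again a nonnegative first eigenfunction, so it weakly solves $-\Lap\abs{w}=\bigl(\lambda_1^{\eps,\varphi}-b^\eps(\varphi)\bigr)\abs{w}$ with a coefficient in $L^\infty(\Omega)$. Interior elliptic regularity then gives $\abs{w}\in W^{2,p}_{\mathrm{loc}}(\Omega)\subset C(\Omega)$, and the strong maximum principle --- equivalently, the Harnack inequality of De Giorgi--Nash--Moser type for uniformly elliptic equations with bounded zeroth-order term, see~\cite{Gilbarg-Trudinger} --- forces $\abs{w}>0$ everywhere in $\Omega$; normalizing in $L^2(\Omega)$ produces $\ov w$. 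For one-dimensionality I would argue by contradiction: if the eigenspace had dimension at least two, pick an eigenfunction $v\neq0$ with $\intO\ov w\,v\dLL^n=0$; since $\ov w>0$, the function $v$ must be positive on a set of positive measure and negative on another, so $v^{\pm}:=\max\{\pm v,0\}$ are both nontrivial. Because the cross terms vanish, $\lambda_1^{\eps,\varphi}=a_\varphi(v,v)/\norm{v}_{L^2(\Omega)}^2$ is a convex combination of the Rayleigh quotients of $v^+$ and $v^-$, each $\ge\lambda_1^{\eps,\varphi}$, hence both equal $\lambda_1^{\eps,\varphi}$; then $v^+$ is a nontrivial nonnegative first eigenfunction and, by the argument above, $v^+>0$ everywhere in $\Omega$, contradicting $v^+\equiv0$ on the positive-measure set $\{v<0\}$. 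Thus the eigenspace of $\lambda_1^{\eps,\varphi}$ is one-dimensional, and the choice $w_1^{\eps,\varphi}=\ov w$ is consistent. The hard part is exactly this last step: making the strong maximum principle, Harnack inequality and interior regularity work for the merely bounded (not continuous) coefficient $b^\eps(\varphi)$, which is why one must invoke De Giorgi--Nash--Moser theory rather than classical Schauder estimates.
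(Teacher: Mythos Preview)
Your argument is correct and is precisely the standard spectral-theoretic route. Note, however, that the paper does not actually prove this proposition: it is stated as a recollection of known facts, with the sentence ``The results can be found in \cite{GHKK}, but are also accessible via the standard literature (see, e.g., \cite{Alt,Gilbarg-Trudinger})'' immediately preceding it. So there is no ``paper's own proof'' to compare against --- your write-up simply fills in what those references contain.

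A minor remark on your closing comment: you slightly overstate the difficulty of the last step. Since $b^\eps(\varphi)\in L^\infty(\Omega)$, the weak Harnack inequality / strong maximum principle for operators in divergence form with bounded lower-order coefficients applies directly (this is exactly \cite[Theorem~8.19]{Gilbarg-Trudinger}, which the paper itself invokes later in the proof of Theorem~\ref{THM:MIN}); moreover, Corollary~\ref{COR:REG} already records the $H^2$ regularity via \cite[Theorem~8.12]{Gilbarg-Trudinger}. So no delicate appeal to De Giorgi--Nash--Moser beyond what is packaged in those two theorems is needed.
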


For any prescribed $m\in (0,1)$, we define the set of admissible controls
\begin{align*}
    \Phi_m 
    := \left\{
        \varphi \in H^1_0(\Omega)
        \suchthat
        \begin{aligned}
        &0 \le \varphi(x) \le 1 \;\;\text{for a.e. $x\in\Omega$}, \\
        & \textstyle\fint_\Omega \varphi \dLL^n = m
        \end{aligned}
    \right\}
    \subset H^1_0(\Omega) \cap L^\infty(\Omega)
    \,.
\end{align*}

Applying \cite[Theorem 8.12]{Gilbarg-Trudinger} we directly infer the following statement.
\begin{Cor} \label{COR:REG}
    Let $\eps>0$ and $\varphi\in\Phi_m$ be arbitrary, and let $w$ be an eigenfunction associated with the eigenvalue $\lambda_1^{\eps,\varphi}$ in the sense of Definition~\ref{DEF:EVPROBLEM}.
    Then it holds that $w\in H^1_0(\Omega)\cap H^2(\Omega)$ and $w$ is a strong solution of the system in \eqref{EVP}, meaning that
    \begin{subequations}
    \begin{alignat*}{3}
        - \Lap w + b^\eps(\varphi) w &= \lambda_1^{\eps,\varphi} w 
        &&\qquad \text{a.e.~in $\Omega$}, \\
        w\vert_{\del\Omega} &= 0
        &&\qquad\text{a.e.~on $\del\Omega$.}
    \end{alignat*}
    \end{subequations}
\end{Cor}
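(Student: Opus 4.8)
The plan is to derive the corollary as a direct application of elliptic regularity theory to the weak formulation \eqref{WeakEVe}. First I would observe that, by Proposition~\ref{PROP:EVP}, the eigenfunction $w\in H^1_0(\Omega)$ satisfies
\begin{align*}
    \intO \Grad w\cdot\Grad\eta\dLL^n = \intO\big(\lambda_1^{\eps,\varphi} - b^\eps(\varphi)\big)w\,\eta\dLL^n
    \qquad\text{for all }\eta\in H^1_0(\Omega),
\end{align*}
i.e.\ $w$ is a weak solution of the Poisson-type equation $-\Lap w = f$ with right-hand side $f \coloneqq \big(\lambda_1^{\eps,\varphi} - b^\eps(\varphi)\big)w$. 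The key point is that $f\in L^2(\Omega)$: indeed $w\in L^2(\Omega)$, the eigenvalue $\lambda_1^{\eps,\varphi}$ is a finite real number by Proposition~\ref{PROP:EVP}\ref{PROP:EVP:A}, and $b^\eps(\varphi)$ is bounded (taking values in $[0,\beta^\eps]$ by assumption \ref{ASS:A:2}, since $\varphi\in\Phi_m$ forces $\varphi\in[0,1]$ a.e.), so the coefficient $\lambda_1^{\eps,\varphi} - b^\eps(\varphi)$ lies in $L^\infty(\Omega)$ and $\|f\|_{L^2(\Omega)}\le\big(\lambda_1^{\eps,\varphi}+\beta^\eps\big)\|w\|_{L^2(\Omega)}<\infty$.

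Next I would invoke global $H^2$ regularity for the Dirichlet problem on the ball $\Omega=B_R(0)$. Since $\Omega$ has smooth (in particular $C^{1,1}$) boundary and $w\in H^1_0(\Omega)$ solves $-\Lap w = f\in L^2(\Omega)$ in the weak sense with homogeneous Dirichlet data, \cite[Theorem~8.12]{Gilbarg-Trudinger} yields $w\in H^2(\Omega)$ together with the estimate $\|w\|_{H^2(\Omega)}\le C\big(\|w\|_{L^2(\Omega)}+\|f\|_{L^2(\Omega)}\big)$ for a constant $C=C(n,R)$. Consequently $\Lap w\in L^2(\Omega)$ is well defined as an $L^2$-function, and the weak formulation, after integrating by parts against $\eta\in C_c^\infty(\Omega)$ and using the fundamental lemma of the calculus of variations, upgrades to the pointwise identity $-\Lap w + b^\eps(\varphi)w = \lambda_1^{\eps,\varphi}w$ a.e.\ in $\Omega$. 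The boundary condition $w\vert_{\del\Omega}=0$ holds in the sense of traces because $w\in H^1_0(\Omega)$, and since $w\in H^2(\Omega)\hookrightarrow C(\ov\Omega)$ for $n\le 3$ (and more generally the trace is the $H^{1/2}$-trace, which vanishes), this is exactly the asserted strong solution property.

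I do not anticipate a genuine obstacle here — this is a textbook bootstrap argument — so the only care needed is bookkeeping: making sure the admissible class $\Phi_m$ (defined later in Section~\ref{SEC:MAIN}) indeed enforces $\varphi\in[0,1]$ so that $b^\eps(\varphi)$ is defined and bounded, and citing the $H^2$-regularity result in the precise form that requires only $L^2$ right-hand side and $C^{1,1}$ domain rather than smoothness of the coefficient. If one wanted a completely self-contained statement one could alternatively note that $\Omega$ being convex already suffices for the $H^2$-estimate, but citing \cite{Gilbarg-Trudinger} as the authors do is the cleanest route.
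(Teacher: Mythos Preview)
Your proposal is correct and matches the paper's approach exactly: the paper simply states that the corollary follows by ``applying \cite[Theorem~8.12]{Gilbarg-Trudinger}'' without further detail, and your argument spells out precisely how that application works (rewriting the equation as $-\Lap w=f$ with $f\in L^2(\Omega)$ via the boundedness of $b^\eps(\varphi)$, then invoking global $H^2$ regularity on the smooth domain $\Omega=B_R(0)$). There is nothing to add.
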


Now we formulate the shape optimization problem for the principal eigenvalue. This can be regarded as a special case of the framework in \cite{GHKK} by choosing $\Psi(\lambda_1)=\lambda_1$ there. Hence, we only briefly summarize the main aspects concerning this optimization problem at this point.

For $\eps>0$ and $\varphi\in\Phi_m$, we now introduce the Ginzburg--Landau energy
\begin{align}
    \label{DEF:EGL}
    \EE^\eps(\varphi) := \intO \Big(\frac{\eps}{2} \abs{\Grad\varphi}^2
        + \frac{1}{\eps} \psi(\varphi) \Big)\dLL^n\,.
\end{align}
This term regularizes the optimization problem in order for it to be well-posed (see e.g., \cite[Theorem 6.1]{GHK}).\\
We observe that the Ginzburg--Landau energy is decreasing with respect to symmetric-decreasing rearrangement of its argument. This can be interpreted as a phase-field version of the isoperimetric inequality.

\begin{Lem}[Phase-field isoperimetric inequality]\label{LEM:EGL}
    Let $\eps > 0$ be arbitrary. Then, for all $\varphi \in H_0^1(\Omega;[0,1])$, we have
    \begin{align}\label{EST:E*}
        \EE^\eps(\varphi^*) \le \EE^\eps(\varphi).
    \end{align}
\end{Lem}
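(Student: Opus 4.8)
The plan is to split the Ginzburg--Landau energy $\EE^\eps(\varphi)$ into its gradient part $\tfrac{\eps}{2}\intO |\Grad\varphi|^2 \dLL^n$ and its potential part $\tfrac{1}{\eps}\intO \psi(\varphi)\dLL^n$, and to bound each separately under the rearrangement. The gradient term is handled directly by the Pólya--Szeg\H{o} inequality of Lemma~\ref{LEM:SDR}\ref{item:f* polya szego}: since $\varphi\in H^1_0(\Omega;[0,1])\subset H^1_0(\Omega;\R_0^+)$, we have $\varphi^*\in H^1_0(\Omega;\R_0^+)$ with $\intO |\Grad\varphi^*|^2 \dLL^n \le \intO |\Grad\varphi|^2 \dLL^n$. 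In particular, this also shows that $\varphi^*$ is indeed an admissible competitor, i.e.~$\varphi^*\in H^1_0(\Omega;[0,1])$ — the box constraint $0\le\varphi^*\le 1$ follows from the fact that the rearrangement preserves the essential range, or can be read off from Lemma~\ref{LEM:SDR}\ref{item:f* level sets and p norms}.

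For the potential term I would invoke Lemma~\ref{LEM:SDR}\ref{item:f* psi} with $\Psi=\psi$. The hypotheses there are exactly that $\Psi\in C^1([0,1])$ with $\Psi(0)=0$, which is guaranteed by assumption \ref{ASS:A:1} (in fact $\psi\in C^2([0,1])$ and $\psi(0)=0$). Since $0\le\varphi\le 1$ a.e.~in $\Omega$, that lemma gives the equality
\begin{align*}
    \intO \psi(\varphi^*)\dLL^n = \intO \psi(\varphi)\dLL^n,
\end{align*}
so the potential part of the energy is in fact unchanged by rearrangement. Adding the two contributions with their respective prefactors $\tfrac{\eps}{2}$ and $\tfrac{1}{\eps}$ (both positive) yields $\EE^\eps(\varphi^*)\le\EE^\eps(\varphi)$, which is the claim.

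There is no serious obstacle here: the proof is essentially an assembly of parts \ref{item:f* psi} and \ref{item:f* polya szego} of Lemma~\ref{LEM:SDR}, whose own proofs are deferred. The only point requiring a word of care is the double-obstacle case from Remark~\ref{Rem:Pot}, where $\psi$ is set to $+\infty$ outside $[0,1]$; but since every admissible $\varphi$ (and hence $\varphi^*$) takes values in $[0,1]$, only the restriction $\psi|_{[0,1]}$ is ever evaluated, and on that interval $\psi$ is of class $C^1$ (indeed $C^2$ — it is a quadratic), so Lemma~\ref{LEM:SDR}\ref{item:f* psi} still applies verbatim. One should also note explicitly, for later use, that combining this with the strict-equality clause of the Pólya--Szeg\H{o} inequality shows that $\EE^\eps(\varphi^*)=\EE^\eps(\varphi)$ forces $\varphi=\varphi^*$ a.e.~whenever the non-degeneracy condition \eqref{COND:GRAD} holds, but this refinement is not needed for the statement as presented.
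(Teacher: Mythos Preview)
Your proof is correct and follows exactly the same approach as the paper: split the Ginzburg--Landau energy into its gradient and potential parts, apply the P\'olya--Szeg\H{o} inequality (Lemma~\ref{LEM:SDR}\ref{item:f* polya szego}) to the former, and use Lemma~\ref{LEM:SDR}\ref{item:f* psi} with $\Psi=\psi$ to see that the latter is unchanged under rearrangement. The additional remarks on admissibility of $\varphi^*$, the double-obstacle potential, and the equality case are all fine but go slightly beyond what the paper records for this lemma.
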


Furthermore we will prove the following phase-field version of the Faber--Krahn inequality on the diffuse interface level.
\begin{Thm}[Phase-field Faber--Krahn inequality]\label{THM:FKDif}
    Let $\eps > 0$ be arbitrary. Then, for all $\varphi \in H_0^1(\Omega;[0,1])$, we have
    \begin{align*}
        \lambda_1^{\eps,\varphi^*} \le \lambda_1^{\eps,\varphi}.
    \end{align*}
\end{Thm}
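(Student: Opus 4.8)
The plan is to combine the Courant--Fischer characterization of the principal eigenvalue with the rearrangement inequalities from Lemma~\ref{LEM:SDR}. Recall from Proposition~\ref{PROP:EVP}\ref{PROP:EVP:C} that
\begin{align*}
    \lambda_1^{\eps,\varphi}
    = \min_{w \in H^1_0(\Omega)\backslash\{0\}}
    \frac{\intO \abs{\Grad w}^2 + b^\eps(\varphi)\, w^2 \dLL^n}{\norm{w}_{L^2(\Omega)}^2},
\end{align*}
and let $w = w_1^{\eps,\varphi}$ be the positive-normalized eigenfunction, so $w \in H^1_0(\Omega)$, $w>0$ a.e. in $\Omega$, and $\norm{w}_{L^2(\Omega)}=1$. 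The idea is to use $w^*$ as a competitor in the Rayleigh quotient for $\lambda_1^{\eps,\varphi^*}$, so that
\begin{align*}
    \lambda_1^{\eps,\varphi^*}
    \le \frac{\intO \abs{\Grad w^*}^2 + b^\eps(\varphi^*)\, (w^*)^2 \dLL^n}{\norm{w^*}_{L^2(\Omega)}^2},
\end{align*}
which is legitimate because, by Lemma~\ref{LEM:SDR}\ref{item:f* polya szego}, $w^* \in H^1_0(\Omega;\R_0^+)$ is a nontrivial admissible test function (nontrivial since $\norm{w^*}_{L^2(\Omega)} = \norm{w}_{L^2(\Omega)} = 1$ by Lemma~\ref{LEM:SDR}\ref{item:f* level sets and p norms}). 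It then remains to bound the numerator by $\intO \abs{\Grad w}^2 + b^\eps(\varphi)\, w^2 \dLL^n = \lambda_1^{\eps,\varphi}$.

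For the gradient term, the Pólya--Szeg\H{o} inequality (Lemma~\ref{LEM:SDR}\ref{item:f* polya szego}) gives $\intO \abs{\Grad w^*}^2 \dLL^n \le \intO \abs{\Grad w}^2 \dLL^n$ directly. For the potential term, I want $\intO b^\eps(\varphi^*)\, (w^*)^2 \dLL^n \le \intO b^\eps(\varphi)\, w^2 \dLL^n$. The key observation is that $b^\eps$ is continuous and \emph{strictly decreasing} on $[0,1]$ by assumption~\ref{ASS:A:2}, so the map $\Phi(s) := \beta^\eps - b^\eps(s)$ (for $s\in[0,1]$, extended suitably) is non-decreasing, non-negative, and lower semicontinuous, whence by Lemma~\ref{LEM:SDR}\ref{item:f* phi} we have $(\Phi\circ\varphi)^* = \Phi\circ\varphi^*$ a.e.; equivalently $(b^\eps(\varphi))^* = \beta^\eps - b^\eps(\varphi^*)$ a.e. — that is, the symmetric-decreasing rearrangement of $b^\eps(\varphi)$, as a non-negative function bounded by $\beta^\eps$, equals $\beta^\eps - b^\eps(\varphi^*)$. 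Now apply the Hardy--Littlewood inequality (Lemma~\ref{LEM:SDR}\ref{item:f* hardy littlewood}) to the non-negative functions $f = b^\eps(\varphi)$ and $g = w^2$:
\begin{align*}
    \intO b^\eps(\varphi)\, w^2 \dLL^n
    \le \intO \big(b^\eps(\varphi)\big)^*\, (w^2)^* \dLL^n
    = \intO \big(\beta^\eps - b^\eps(\varphi^*)\big)\, (w^*)^2 \dLL^n,
\end{align*}
where I also used $(w^2)^* = (w^*)^2$ from Lemma~\ref{LEM:SDR}\ref{item:f* phi} (applied with $\Phi(s)=s^2$). This is the wrong direction in appearance, so instead I rewrite: using $\norm{w}_{L^2}^2 = \norm{w^*}_{L^2}^2$, the inequality above becomes $\intO b^\eps(\varphi)\,w^2 \le \beta^\eps - \intO b^\eps(\varphi^*)(w^*)^2$, i.e. $\intO b^\eps(\varphi^*)(w^*)^2 \le \beta^\eps - \intO b^\eps(\varphi)w^2$. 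That is still not quite it — so the clean route is to apply Hardy--Littlewood in the form $\intO \big(\beta^\eps - b^\eps(\varphi)\big)\, w^2 \le \intO \big(\beta^\eps - b^\eps(\varphi)\big)^*\, (w^2)^*$, which upon subtracting $\beta^\eps \intO w^2 = \beta^\eps\intO (w^*)^2$ from both sides and multiplying by $-1$ yields exactly $\intO b^\eps(\varphi^*)\, (w^*)^2 \dLL^n \le \intO b^\eps(\varphi)\, w^2 \dLL^n$.

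Putting the two estimates together, the numerator of the Rayleigh quotient for $w^*$ is bounded by $\intO \abs{\Grad w}^2 + b^\eps(\varphi)\, w^2 \dLL^n$, and since $\norm{w^*}_{L^2(\Omega)}^2 = 1 = \norm{w}_{L^2(\Omega)}^2$, we conclude $\lambda_1^{\eps,\varphi^*} \le \lambda_1^{\eps,\varphi}$. The main obstacle, and the point requiring the most care, is the identification $(b^\eps(\varphi))^* = \beta^\eps - b^\eps(\varphi^*)$ (equivalently, handling the rearrangement of the decreasing nonlinearity $b^\eps$): one must verify that $\Phi(s) = \beta^\eps - b^\eps(s)$ indeed satisfies the hypotheses of Lemma~\ref{LEM:SDR}\ref{item:f* phi} — non-decreasing and lower semicontinuous on $\R_0^+$ after a suitable (e.g., constant) extension beyond $[0,1]$ — and to keep track of the fact that all functions involved take values in $[0,1]$, which is precisely why the box constraint in $\Phi_m$ and assumption~\ref{ASS:A:2} are used. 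Everything else is a direct application of the already-established properties of the symmetric-decreasing rearrangement.
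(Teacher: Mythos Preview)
Your proposal is correct and follows essentially the same approach as the paper: use the positive-normalized eigenfunction $w=w_1^{\eps,\varphi}$, plug $w^*$ into the Courant--Fischer characterization for $\lambda_1^{\eps,\varphi^*}$, apply P\'olya--Szeg\H{o} to the gradient term, and handle the potential term by introducing the increasing function $B^\eps(s)=\beta^\eps-b^\eps(s)$ so that Lemma~\ref{LEM:SDR}\ref{item:f* phi} and Hardy--Littlewood give $\intO b^\eps(\varphi^*)(w^*)^2 \le \intO b^\eps(\varphi)w^2$. Your momentary claim that $(b^\eps(\varphi))^* = \beta^\eps - b^\eps(\varphi^*)$ is not correct as stated (this would be the symmetric-\emph{increasing} rearrangement), but you immediately abandon it and apply Hardy--Littlewood to $\beta^\eps - b^\eps(\varphi)$ instead, which is exactly the paper's computation \eqref{EST:B*}; just delete that false start in a clean write-up.
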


In order to recover the classical Faber--Krahn inequality in the sharp interface limit $\eps\to 0$, we consider the following optimization problem:
\begin{align}
    \label{OP}
    \left\{
    \begin{aligned}
        &\text{Minimize}
        && J^\eps_\gamma(\varphi) 
        = \lambda_1^{\eps,\varphi} + \gamma \EE^\eps(\varphi) \\
        &\text{subject to} 
        && \varphi\in \Phi_m\,.
    \end{aligned}
    \right.
    \tag{$\text{OP}^\eps_\gamma$}
\end{align}
Here, $\lambda_1^{\eps,\varphi}$ denotes the principal eigenvalue corresponding to the function $\varphi$ as introduced in Proposition~\ref{PROP:EVP}, and $\gamma>0$ is the surface tension. Here, the additional summand $\gamma \EE^\eps(\varphi)$ acts as a regularization term which ensures well-posedness of the optimization problem and is further used to gain additional information about its minimizers.
More precisely, for fixed $\eps>0$, the gradient term appearing in the Ginzburg--Landau energy ensures the weak compactness of any minimizing sequence of phase-fields in $H_0^1(\Omega)$, which is needed to apply the direct method in the calculus of variations (see e.g. \cite[Theorem~6.1]{GHK}).

After passing to the sharp interface limit $\eps\to 0$, we recover the classical Faber--Krahn inequality in the framework 
of sets of finite perimeter which are represented by $BV$ functions.
First of all for fixed $\gamma>0$, the Ginzburg--Landau energy gives rise to compactness in the space $BV$ for a sequence of minimizers $(\varphi_\eps)_{\eps>0}$ of $J^\eps_{\gamma}$ as $\eps\to 0$, thus providing us with a minimizer $\varphi_0\in BV(\Omega;\{0;1\})$ on the sharp interface level, see the proof of Theorem~\ref{THM:EpsCon} for details.
Afterwards we are able to send $\gamma\to 0$ in order to get rid of the additional perimeter regularization,
\blk 
which is possible as we will see later that the minimizer on the sharp interface level does not depend on $\gamma$.  

The existence of a minimizer $\varphi\in \Phi_m$ of the optimization problem \eqref{OP} was established in \cite[Theorem~3.8]{GHKK}.
This means that the following lemma holds.

\begin{Lem}\label{LEM:MIN}
    Let $\eps,\gamma>0$ be arbitrary.
    Then, the optimization problem \eqref{OP} possesses a minimizer $\varphi\in \Phi_m$.
\end{Lem}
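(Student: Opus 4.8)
The plan is to apply the direct method of the calculus of variations; the only genuinely delicate point will be the behaviour of the principal eigenvalue $\lambda_1^{\eps,\varphi}$ along a minimizing sequence.

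Since $\lambda_1^{\eps,\varphi}>0$ by Proposition~\ref{PROP:EVP}\ref{PROP:EVP:A} and $\EE^\eps(\varphi)\ge 0$ for every $\varphi\in\Phi_m$, the functional $J^\eps_\gamma$ is bounded below by $0$ on $\Phi_m$, and $\Phi_m\ne\emptyset$ (e.g.\ by a radial cut-off of the constant $m$), so $I:=\inf_{\Phi_m}J^\eps_\gamma\in[0,\infty)$. I would fix a minimizing sequence $(\varphi_k)_{k\in\N}\subset\Phi_m$ with $J^\eps_\gamma(\varphi_k)\to I$. For large $k$ we have $\gamma\EE^\eps(\varphi_k)\le I+1$, which together with $0\le\varphi_k\le 1$ a.e.\ yields a uniform bound on $\norm{\varphi_k}_{H^1_0(\Omega)}$. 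Passing to a subsequence (not relabeled), we may assume $\varphi_k\wto\varphi$ weakly in $H^1_0(\Omega)$, strongly in $L^2(\Omega)$ by Rellich--Kondrachov, and pointwise a.e.\ in $\Omega$. The limit satisfies $0\le\varphi\le 1$ a.e., the constraint $\fint_\Omega\varphi\dLL^n=m$ passes to the limit by $L^1$-convergence, and $\varphi\in H^1_0(\Omega)$ since $H^1_0(\Omega)$ is weakly closed; hence $\varphi\in\Phi_m$. Moreover $\EE^\eps$ is sequentially lower semicontinuous along this sequence: the gradient term is weakly lower semicontinuous by convexity, and $\tfrac{1}{\eps}\intO\psi(\varphi_k)\dLL^n\to\tfrac{1}{\eps}\intO\psi(\varphi)\dLL^n$ by dominated convergence, using the a.e.\ convergence and the boundedness of $\psi$ on $[0,1]$.

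The main step is to show $\lambda_1^{\eps,\varphi_k}\to\lambda_1^{\eps,\varphi}$. For the upper bound, insert the positive-normalized eigenfunction $\ov w$ of $\varphi$ into the Courant--Fischer quotient \eqref{CHAR:CF} written for $\varphi_k$; since $0\le b^\eps\le\beta^\eps$ and $b^\eps$ is continuous (assumption~\ref{ASS:A:2}) and $\varphi_k\to\varphi$ a.e., dominated convergence gives $\intO b^\eps(\varphi_k)\ov w^2\dLL^n\to\intO b^\eps(\varphi)\ov w^2\dLL^n$, whence $\limsup_k\lambda_1^{\eps,\varphi_k}\le\lambda_1^{\eps,\varphi}$. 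For the lower bound, let $w_k:=w_1^{\eps,\varphi_k}$ be the positive-normalized eigenfunctions. From $\intO\abs{\Grad w_k}^2\dLL^n\le\lambda_1^{\eps,\varphi_k}$ (as $\norm{w_k}_{L^2(\Omega)}=1$) together with the bound just obtained, $(w_k)$ is bounded in $H^1_0(\Omega)$, so along a further subsequence $w_k\wto w_\infty$ in $H^1_0(\Omega)$ and strongly in $L^2(\Omega)$, whence $\norm{w_\infty}_{L^2(\Omega)}=1$. Splitting $b^\eps(\varphi_k)w_k^2-b^\eps(\varphi)w_\infty^2=b^\eps(\varphi_k)(w_k^2-w_\infty^2)+\big(b^\eps(\varphi_k)-b^\eps(\varphi)\big)w_\infty^2$ and estimating the first term by $\beta^\eps\norm{w_k^2-w_\infty^2}_{L^1(\Omega)}\le 2\beta^\eps\norm{w_k-w_\infty}_{L^2(\Omega)}\to 0$ and the second by dominated convergence, we obtain $\intO b^\eps(\varphi_k)w_k^2\dLL^n\to\intO b^\eps(\varphi)w_\infty^2\dLL^n$. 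Combining this with weak lower semicontinuity of the Dirichlet energy and \eqref{CHAR:CF} for $\varphi$,
\begin{align*}
    \liminf_{k\to\infty}\lambda_1^{\eps,\varphi_k}
    = \liminf_{k\to\infty}\intO\abs{\Grad w_k}^2+b^\eps(\varphi_k)w_k^2\dLL^n
    \ge \intO\abs{\Grad w_\infty}^2+b^\eps(\varphi)w_\infty^2\dLL^n
    \ge \lambda_1^{\eps,\varphi},
\end{align*}
so that indeed $\lambda_1^{\eps,\varphi_k}\to\lambda_1^{\eps,\varphi}$. Finally, putting the pieces together,
\begin{align*}
    I = \lim_{k\to\infty}J^\eps_\gamma(\varphi_k)
    \ge \liminf_{k\to\infty}\lambda_1^{\eps,\varphi_k}+\gamma\liminf_{k\to\infty}\EE^\eps(\varphi_k)
    \ge \lambda_1^{\eps,\varphi}+\gamma\EE^\eps(\varphi)
    = J^\eps_\gamma(\varphi)\ge I,
\end{align*}
so $\varphi\in\Phi_m$ is a minimizer. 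I expect the continuity of $\varphi\mapsto\lambda_1^{\eps,\varphi}$ — in particular the convergence of the cross term $\intO b^\eps(\varphi_k)w_k^2\dLL^n$, where the two factors only converge a.e.\ and strongly in $L^2$ respectively — to be the main obstacle; it is exactly the boundedness of $b^\eps$ on $[0,1]$ from \ref{ASS:A:2} that makes this work, and none of the argument uses the $\eps\to 0$ asymptotics of \ref{ASS:A:3}.
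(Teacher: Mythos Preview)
Your argument is correct: the direct method with the continuity result $\lambda_1^{\eps,\varphi_k}\to\lambda_1^{\eps,\varphi}$ (obtained exactly as you do, by bounding $b^\eps$ uniformly by $\beta^\eps$ and combining dominated convergence with the compactness of the eigenfunctions in $L^2$) is precisely the standard route, and all steps are clean. The paper itself does not spell out a proof but simply invokes \cite[Theorem~3.8]{GHKK}, where this existence result is established in greater generality; your self-contained version is essentially what that reference does in the present special case.
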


Now, based on Lemma~\ref{LEM:EGL} and Theorem~\ref{THM:FKDif},
the next theorem shows that minimizers of \eqref{OP} are necessarily symmetric-decreasing. The same holds for the positive-normalized eigenfunction of the corresponding principal eigenvalue.

\begin{Thm}[Phase-field Faber--Krahn]\label{THM:MIN}
    Let $\eps,\gamma>0$ be arbitrary, $m\in(0,1)$. Also, let $\varphi\in \Phi_m$ be any minimizer of the optimization problem \eqref{OP}.
    Then, $\varphi=\varphi^*$ almost everywhere in $\Omega$, meaning that $\varphi$ is symmetric-decreasing, and the positive-normalized eigenfunction $w_1^{\eps,\varphi}$ to the principal eigenvalue $\lambda_1^{\eps,\varphi}$ also fulfills $w_1^{\eps,\varphi}=(w_1^{\eps,\varphi})^*$ almost everywhere in $\Omega$. 
\end{Thm}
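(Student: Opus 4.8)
The plan is to combine the two monotonicity results under symmetrization (Lemma~\ref{LEM:EGL} and Theorem~\ref{THM:FKDif}) with the rigidity part of the Pólya--Szeg\H{o} inequality (Lemma~\ref{LEM:SDR}\ref{item:f* polya szego}). First I would take a minimizer $\varphi\in\Phi_m$ of \eqref{OP} and consider its symmetric-decreasing rearrangement $\varphi^*$. By Lemma~\ref{LEM:SDR}\ref{item:f* level sets and p norms} we have $\|\varphi^*\|_{L^\infty}\le 1$ and $\fint_\Omega \varphi^* = \fint_\Omega \varphi = m$; together with the Pólya--Szeg\H{o} inequality \eqref{PSI} and $\varphi\in H^1_0(\Omega;\R_0^+)$ this shows $\varphi^*\in\Phi_m$, so $\varphi^*$ is an admissible competitor. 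Then Theorem~\ref{THM:FKDif} gives $\lambda_1^{\eps,\varphi^*}\le\lambda_1^{\eps,\varphi}$ and Lemma~\ref{LEM:EGL} gives $\EE^\eps(\varphi^*)\le\EE^\eps(\varphi)$, hence $J^\eps_\gamma(\varphi^*)\le J^\eps_\gamma(\varphi)$. Since $\varphi$ is a minimizer, both inequalities must be equalities; in particular $\EE^\eps(\varphi^*)=\EE^\eps(\varphi)$, which forces $\intO|\Grad\varphi^*|^2 = \intO|\Grad\varphi|^2$ because the potential terms agree by Lemma~\ref{LEM:SDR}\ref{item:f* psi} (applied with $\Psi=\psi$ after extending $\psi$ to a $C^1$ function on $[0,1]$, using $\psi(0)=0$).

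The next step is to upgrade this equality of Dirichlet energies to $\varphi=\varphi^*$ via the equality case of Pólya--Szeg\H{o}. This requires checking the two hypotheses for rigidity: that $\varphi>0$ a.e.\ in $\Omega$ and that $\LL^n(\{\Grad\varphi^*=0\})=0$. Here I expect the main obstacle, since a priori a minimizer need not be strictly positive, and the saturated regions $\{\varphi^*=0\}$ and $\{\varphi^*=1\}$ would be large flat plateaus violating \eqref{COND:GRAD}. The way around this is to argue first that the \emph{eigenfunction} is symmetric-decreasing and use it: let $w=w_1^{\eps,\varphi}$ be the positive-normalized principal eigenfunction, which by Proposition~\ref{PROP:EVP}\ref{PROP:EVP:C} satisfies $w>0$ a.e.\ in $\Omega$. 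I would show that $w^*$ is an admissible test function in the Courant--Fischer characterization \eqref{CHAR:CF} for $\lambda_1^{\eps,\varphi^*}$: using the Hardy--Littlewood inequality \eqref{HLE} with $f = b^\eps(\varphi)$ (which is symmetric-decreasing since $b^\eps$ is decreasing and $\varphi=\varphi^*$ — wait, this is circular) — more carefully, using that $b^\eps(\varphi^*) = (b^\eps(\varphi))^*$ fails in general, so instead I apply \eqref{HLE} to bound $\intO b^\eps(\varphi^*)(w^*)^2 \le \intO (b^\eps(\varphi))^* (w^2)^* $ and then relate the latter to $\intO b^\eps(\varphi) w^2$; combined with $\|w^*\|_{L^2}=\|w\|_{L^2}$ and \eqref{PSI} for $w$ this gives $\lambda_1^{\eps,\varphi^*}\le \lambda_1^{\eps,\varphi}$, with equality exactly when $w=w^*$ (and $\varphi$ forces its own rearrangement in the $b^\eps$ term). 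Since we already know $\lambda_1^{\eps,\varphi^*}=\lambda_1^{\eps,\varphi}$ from the minimality argument, the equality cases must hold, yielding both $w=w^*$ and the needed information to run the Pólya--Szeg\H{o} rigidity for $\varphi$.

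Concretely, to close the argument I would: (i) from the chain of equalities in the eigenvalue estimate, extract that $w^*$ is itself an eigenfunction for $\lambda_1^{\eps,\varphi^*}$ and that $\intO|\Grad w^*|^2=\intO|\Grad w|^2$; (ii) invoke Lemma~\ref{LEM:SDR}\ref{item:f* polya szego} for $w$ — here $w>0$ a.e.\ holds by Proposition~\ref{PROP:EVP}\ref{PROP:EVP:C}, and $\LL^n(\{\Grad w^*=0\})=0$ follows because $w^*$, being (a multiple of) the positive principal eigenfunction on the ball $\{\varphi^*=1\}^{\circ}$-type region, solves an elliptic equation and is real-analytic / has no interior critical level sets of positive measure (this is the standard regularity input, cf.\ Corollary~\ref{COR:REG} plus unique continuation); conclude $w=w^*$, i.e.\ $w$ is symmetric-decreasing. (iii) With $w=w^*$ in hand, revisit the equality $\lambda_1^{\eps,\varphi}=\lambda_1^{\eps,\varphi^*}$: the only remaining slack in the Hardy--Littlewood step $\intO b^\eps(\varphi) w^2 \le \intO (b^\eps(\varphi))^* (w^2)^*$ being an equality, together with $w^2=(w^2)^*$ strictly symmetric-decreasing on its support, forces $b^\eps(\varphi)=(b^\eps(\varphi))^*$ a.e., and since $b^\eps$ is continuous and strictly decreasing this gives $\varphi=\varphi^*$ a.e. Finally I would double-check the degenerate possibility that $w^*$ vanishes on a set of positive measure or that $\{\varphi=0\}$ has positive measure: on $\{\varphi^*=0\}$ the coefficient $b^\eps$ equals $\beta^\eps>0$, and one shows $w^*>0$ there as well since the principal eigenfunction of \eqref{EVP} is positive throughout $\Omega$, so the strict-monotonicity argument goes through on all of $\Omega$. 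This completes the proof that $\varphi=\varphi^*$ and $w_1^{\eps,\varphi}=(w_1^{\eps,\varphi})^*$ a.e.\ in $\Omega$.
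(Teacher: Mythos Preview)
Your overall strategy matches the paper's: use minimality to force equality in both $\EE^\eps$ and $\lambda_1^\eps$ under rearrangement, then transfer the Pólya--Szeg\H{o} rigidity from the eigenfunction $w$ to $\varphi$ via the strict monotonicity of $b^\eps$. The false start with applying rigidity directly to $\varphi$ is correctly abandoned, and your step~(iii) (using $w=w^*$ to force $b^\eps(\varphi)=b^\eps(\varphi^*)$ pointwise) is essentially what the paper does.

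The genuine gap is in step~(ii): your justification that $\LL^n(\{\nabla w^*=0\})=0$ does not work. You invoke ``real-analytic / unique continuation'', but $w^*$ solves $-\Delta w^*+b^\eps(\varphi^*)w^*=\lambda w^*$ with a coefficient $b^\eps(\varphi^*)$ that is merely $L^\infty$ (since $\varphi^*\in H^1_0\cap L^\infty$, not smooth), so real analyticity is unavailable, and unique continuation does not by itself rule out level sets of positive measure for $\nabla w^*$. Corollary~\ref{COR:REG} only gives $H^2$ regularity, which is not enough for this.

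The paper closes this gap by a direct maximum-principle argument showing that $w^*$ is \emph{strictly} symmetric-decreasing. Suppose not: then $w^*$ is constant (equal to some $c>0$) on an annulus $\{s\le|x|\le t\}$. On that annulus $\Delta w^*=0$, so the strong equation forces $b^\eps(\varphi^*)=\lambda$ there; since $b^\eps(\varphi^*)$ is radially non-decreasing, $b^\eps(\varphi^*)-\lambda\ge 0$ on the whole outer annulus $A=\{s<|x|<R\}$, whence $\Delta w^*=(b^\eps(\varphi^*)-\lambda)w^*\ge 0$ in $A$. The strong maximum principle then gives $w^*\equiv c>0$ on $A$, contradicting $w^*\in H^1_0(\Omega)$. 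Strict radial monotonicity immediately yields $\nabla w^*\ne 0$ a.e., and the Pólya--Szeg\H{o} rigidity then applies to give $w=w^*$. This is the missing idea in your proposal.
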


Furthermore, the following theorem, which is a direct consequence of the boundedness of the Ginzburg--Landau energy along a sequence of minimizers for $\eps\to 0$, shows that the thickness of the interface up to an infinitesimally small error is $\mathcal{O}(\eps)$.

\begin{Thm}\label{THM:DifInt}
   Let $\gamma>0$ and $m\in (0,1)$ be arbitrary. 
   Then, there exists a constant $C>0$ such that for any minimizer $\varphi_\eps$ of the optimization problem \eqref{OP} with $\eps>0$ and all $0<\delta<\frac 12$, it holds that
   \begin{align}
        \label{EST:INTERFACE}
        \LL^n\left(\left\{\delta\le \varphi_\eps 
        \le 1-\delta\right\}\right)\le \frac{C\eps}{\alpha_\delta \gamma }
        \quad\text{with}\quad
        \alpha_\delta:= \underset{[\delta,1-\delta]}{\min} \;\psi >0.
   \end{align}
\end{Thm}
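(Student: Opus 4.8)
The plan is to exploit the scaling structure of the Ginzburg--Landau energy together with the uniform energy bound coming from minimality. First I would observe that, since $\varphi_\eps$ is a minimizer of \eqref{OP} and $\Phi_m$ is nonempty, one can test against a fixed competitor: pick any $\varphi_0 \in \Phi_m$ (for instance the symmetric-decreasing rearrangement of an indicator of a ball of mass $m$, smoothed appropriately, or more simply the constant $m$ if that were admissible — but since we need $H^1_0$ we take a fixed admissible profile whose Ginzburg--Landau energy scales like $1/\eps$ at worst). Actually the cleanest route is to construct, once and for all, a single $\varphi_0 \in \Phi_m$ independent of $\eps$ with $\EE^\eps(\varphi_0) \le C_0/\eps$; then minimality gives
\begin{align*}
    \lambda_1^{\eps,\varphi_\eps} + \gamma\, \EE^\eps(\varphi_\eps)
    = J^\eps_\gamma(\varphi_\eps)
    \le J^\eps_\gamma(\varphi_0)
    = \lambda_1^{\eps,\varphi_0} + \gamma\, \EE^\eps(\varphi_0).
\end{align*}
Since $\lambda_1^{\eps,\varphi_\eps} > 0$ by Proposition~\ref{PROP:EVP}\ref{PROP:EVP:A}, this yields $\EE^\eps(\varphi_\eps) \le \gamma^{-1}\lambda_1^{\eps,\varphi_0} + \EE^\eps(\varphi_0)$. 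To close this I need $\lambda_1^{\eps,\varphi_0}$ to be bounded uniformly in $\eps$, which follows from the Courant--Fischer characterization \eqref{CHAR:CF} by plugging in a fixed $w \in H^1_0(\Omega)\setminus\{0\}$ and using $b^\eps \le \beta^\eps$ — but $\beta^\eps \to \infty$, so this naive bound fails. The fix is to choose $\varphi_0$ so that $\varphi_0 \equiv 1$ on a fixed ball $B_\rho(0) \ssubset \Omega$ (with $\rho$ small enough that the mass constraint $\fint_\Omega \varphi_0 = m$ can still be met), and test \eqref{CHAR:CF} with a function $w$ supported in $B_\rho(0)$; then $b^\eps(\varphi_0) = b^\eps(1) = 0$ on $\supp w$, so $\lambda_1^{\eps,\varphi_0} \le \lambda_1(B_\rho(0))$, the ordinary Dirichlet eigenvalue, which is a finite constant independent of $\eps$. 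Hence $\eps\, \EE^\eps(\varphi_\eps) \le C$ for a constant $C = C(\gamma, m, R, n) > 0$; I will package all $\eps$-independent constants arising from $\EE^\eps(\varphi_0)$ (which is itself $\le C_0/\eps$ by construction) and $\lambda_1(B_\rho)$ into this single $C$.

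Next I would convert this energy bound into the stated measure estimate. By definition \eqref{DEF:EGL},
\begin{align*}
    C \ge \eps\, \EE^\eps(\varphi_\eps)
    = \intO \frac{\eps^2}{2}\abs{\Grad\varphi_\eps}^2 + \psi(\varphi_\eps) \dLL^n
    \ge \intO \psi(\varphi_\eps)\dLL^n
    \ge \int_{\{\delta \le \varphi_\eps \le 1-\delta\}} \psi(\varphi_\eps)\dLL^n.
\end{align*}
On the set $\{\delta \le \varphi_\eps \le 1-\delta\}$ we have $\psi(\varphi_\eps) \ge \alpha_\delta := \min_{[\delta,1-\delta]}\psi$, and $\alpha_\delta > 0$ by assumption \ref{ASS:A:1} (which guarantees $\psi > 0$ on $(0,1)$, together with continuity of $\psi$ on the compact set $[\delta, 1-\delta] \subset (0,1)$). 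Therefore
\begin{align*}
    C \ge \alpha_\delta\, \LL^n\big(\{\delta \le \varphi_\eps \le 1-\delta\}\big),
\end{align*}
and dividing by $\alpha_\delta$ gives the claim with the constant $C/(\alpha_\delta\gamma)$ once one tracks the factor $\gamma$ through the minimality estimate above. More precisely, starting from $\gamma\,\EE^\eps(\varphi_\eps) \le \lambda_1(B_\rho) + \gamma\,\EE^\eps(\varphi_0)$ and multiplying by $\eps$, one gets $\gamma\,\eps\,\EE^\eps(\varphi_\eps) \le \eps\,\lambda_1(B_\rho) + \gamma\,\eps\,\EE^\eps(\varphi_0) \le \lambda_1(B_\rho) + \gamma\, C_0 =: \widetilde C$ for $\eps$ bounded, hence $\LL^n(\{\delta \le \varphi_\eps \le 1-\delta\}) \le \widetilde C/(\alpha_\delta\gamma)$, which is the asserted form after renaming $\widetilde C$ as $C$.

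The only genuinely non-routine point is the construction of the fixed competitor $\varphi_0 \in \Phi_m$ that is $\equiv 1$ on a ball $B_\rho(0) \ssubset \Omega$ and has $\EE^\eps(\varphi_0) \le C_0/\eps$: one takes $\rho > 0$ small enough that $\LL^n(B_\rho)/\LL^n(\Omega) < m$ (so that there is room to reach mass $m$), lets $\varphi_0$ transition from $1$ down to some constant level on an annulus and possibly back to $0$ near $\partial\Omega$ to respect the $H^1_0$ constraint while tuning the mass to exactly $m$; the gradient of such a fixed Lipschitz profile contributes $O(\eps)$ to $\EE^\eps$ and the potential term contributes $O(1/\eps)$, giving the bound. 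I should remark that the dependence of $\rho$, and hence of $C$, on $m$ is harmless since $m \in (0,1)$ is fixed. Everything else — the Courant--Fischer bound, discarding the gradient term, and the pointwise lower bound on $\psi$ — is elementary. Thus the main obstacle, if any, is purely bookkeeping: making sure the competitor exists and isolating the right $\eps$-independent constant; no deep ingredient beyond \eqref{CHAR:CF}, \eqref{DEF:EGL}, and assumption \ref{ASS:A:1} is needed.
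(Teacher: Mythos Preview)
Your argument has a genuine gap: the bound you obtain at the end is $\LL^n\big(\{\delta\le\varphi_\eps\le 1-\delta\}\big)\le \widetilde C/(\alpha_\delta\gamma)$, which is \emph{not} the asserted form --- the factor $\eps$ in the numerator of \eqref{EST:INTERFACE} is missing. The problem is your choice of a \emph{fixed}, $\eps$-independent competitor $\varphi_0$. Since $\varphi_0$ is a Lipschitz function with values in $(0,1)$ on a set of positive measure, the potential term satisfies $\frac{1}{\eps}\int_\Omega\psi(\varphi_0)\dLL^n \sim 1/\eps$, so $\EE^\eps(\varphi_0)\sim 1/\eps$ and hence $J^\eps_\gamma(\varphi_\eps)\le J^\eps_\gamma(\varphi_0)\sim 1/\eps$. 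This only yields $\int_\Omega\psi(\varphi_\eps)\dLL^n\le \eps\,\EE^\eps(\varphi_\eps)\le \tfrac{\eps}{\gamma}J^\eps_\gamma(\varphi_\eps)=O(1)$, not $O(\eps)$.

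To recover the correct estimate you need a competitor with $J^\eps_\gamma$ bounded \emph{uniformly in $\eps$}, i.e., with $\EE^\eps=O(1)$. This forces the competitor to depend on $\eps$: its transition layer must have width of order $\eps$ so that both the gradient term ($\sim \eps\cdot\eps^{-2}\cdot\eps$) and the potential term ($\sim \eps^{-1}\cdot\eps$) stay bounded. The paper obtains exactly such a competitor by invoking the recovery sequence $(\tilde\varphi_\eps)_{\eps>0}\subset\Phi_m$ constructed in the proof of Theorem~\ref{MainGamma}, for which $J^\eps_\gamma(\tilde\varphi_\eps)\to J^0_\gamma(\varphi)<\infty$; this gives $J^\eps_\gamma(\varphi_\eps)\le C$ and hence $\int_\Omega\psi(\varphi_\eps)\dLL^n\le C\eps/\gamma$, from which the measure estimate follows exactly as in your second paragraph. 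Your idea of making $\varphi_0\equiv 1$ on a ball to control $\lambda_1^{\eps,\varphi_0}$ is correct and is also implicit in the recovery-sequence construction; but without scaling the transition layer with $\eps$, you cannot obtain the sharp bound.
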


The proofs of Theorems~\ref{THM:MIN} and \ref{THM:DifInt} are presented in Section~\ref{SECT:PROOF}.

Combining the preceding two results, we deduce that every minimizer $\varphi_\eps$ of \eqref{OP} is symmetric-decreasing and exhibits the expected phase-field structure, that is, for any $0<\delta<\tfrac 12$, the width of the annulus on which $\varphi_\eps$ attains values between $\delta$ and $1-\delta$ is of order $\eps$.
This behavior is illustrated in Figure~\ref{fig:phase-field}.

\bigskip

\FloatBarrier

\begin{figure}[ht!]
    \centering
    \includegraphics[width=0.9\textwidth]{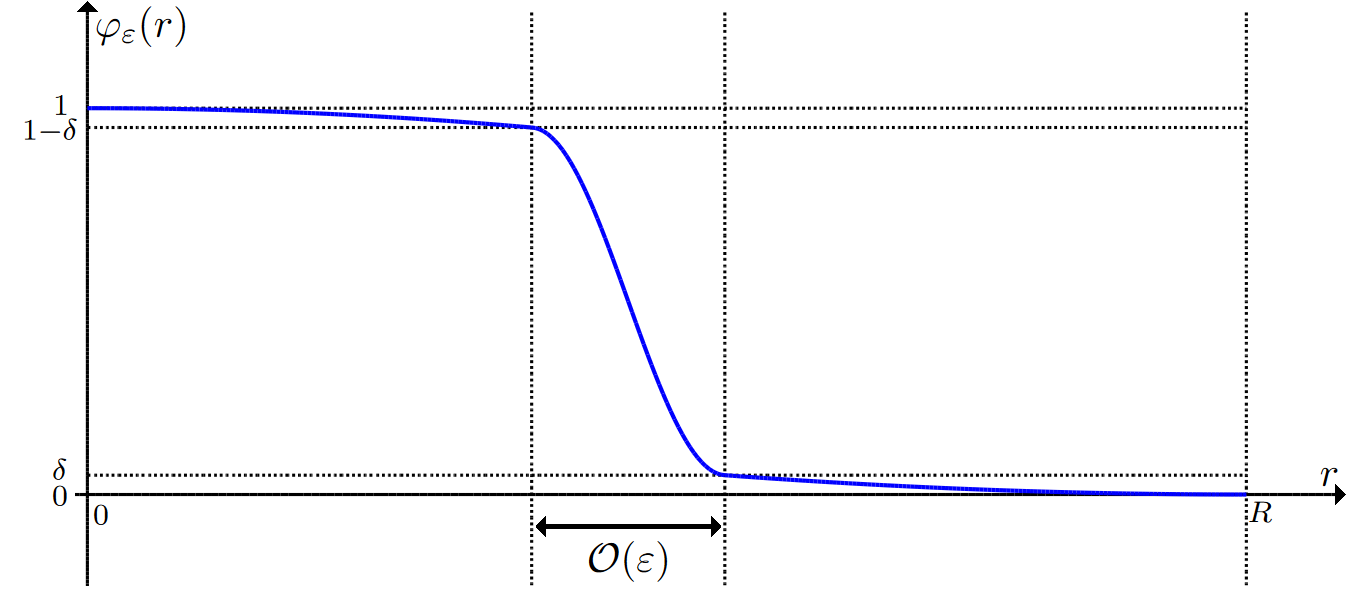}
    \caption{Schematic sketch of a minimizer $\varphi_\eps$ in radial direction $r=|x|$.}
    \label{fig:phase-field}
\end{figure}

\FloatBarrier

\bigskip

Now, we investigate the limit $\eps\to 0$. Therefore, let us fix a sequence of minimizers $(\varphi_{\eps})_{\eps>0}$ of \eqref{OP}. We intend to show that this sequence converges to the characteristic function of the ball centered at the origin with volume $m\abs{\Omega}$ and that this ball is a minimizer of a suitable limit cost functional (see Theorem~\ref{THM:EpsCon}).
To this end, we recall the most important aspects from \cite{GHKK}.  

First of all, we recall the limit eigenvalue problem, that is, the eigenvalue problem corresponding to \eqref{WeakEVe} on the sharp interface level. For details we refer again to \cite{GHKK}.
For any given $\varphi\in BV(\Omega;\left\{\pm 1\right\})$, we want to solve
\begin{subequations}
\label{EVPS}
\begin{alignat}{3}
    \label{EVPS:EQ}
    - \Lap w &= \lambda w 
    &&\qquad \text{in $E^{\varphi}$}, \\
    \label{EVPS:BC}
    w\vert_{\del E^\varphi}&= 0
    &&\qquad\text{on $\del E^{\varphi}$,}
\end{alignat}
\end{subequations}
where
\begin{align*}
    E^{\varphi}&=\left\{x\in \Omega \suchthat \varphi(x)=1\right\}.
\end{align*}
Note that, in general, $E^{\varphi}$ is only a set of finite perimeter and therefore, it merely enjoys a very weak regularity. However, the following definition turns out to be the suitable notion of weak solution as it is compatible with the sharp interface limit $\eps\to 0$ (see Proposition~\ref{PRO:ContEV}).
\begin{Def}\label{DEF:EVLimit}
    Let
    \begin{align*}
        \varphi\in \Phi_m^{0}\coloneqq \left\{ \varphi\in BV(\Omega;\left\{0,1\right\})\suchthat\textstyle\fint_{\Omega}\varphi \dLL^n=m\right\}
    \end{align*}
    be arbitrary. Then we have the following:
    \begin{enumerate}[label = \textnormal{(\alph*)}, leftmargin=*]
    \item For any given $\lambda\in\R$, a function $w\in V^{\varphi}$ is called a \textbf{weak solution} of the system \eqref{EVPS} if the weak formulation
    \begin{align}\label{LimitWF}
        \int_{\Omega}\nabla w\cdot\nabla \eta \dLL^n=\lambda\int_{\Omega}w\eta\dLL^n
    \end{align}
        is satisfied for all test functions $\eta\in V^{\varphi}$, where
    \begin{align*}
        V^{\varphi}&=\left\{ \eta \in H_0^1(\Omega) \suchthat \eta=0\text{ a.e. in }\Omega\backslash E^{\varphi}\right\}.
    \end{align*}    
    \item\label{LimitEV} A real number $\lambda^{0,\varphi}$ is called an \textbf{eigenvalue} associated with $\varphi$, if there exists at least one nontrivial weak solution $w^{0,\varphi}\in H^1_0(\Omega)$ of the eigenvalue problem \eqref{EVPS} written for $\lambda = \lambda^{0,\varphi}$.
    
    In this case, $w^{0,\varphi}$ is called an \textbf{eigenfunction} to the eigenvalue $\lambda^{0,\varphi}$.
    \end{enumerate}
\end{Def}

We recall the next proposition which is a direct consequence of \cite[Theorem 4.2]{GHKK}.
\begin{Pro}
Suppose that $\varphi\in \Phi_m^0$ with $V^{\varphi}\neq \left\{0\right\}$.
\begin{enumerate}[label = \textnormal{(\alph*)}, leftmargin=*]
\item The minimum in
\begin{align}
    \label{MINL1}
    \min \left\{\left.
    \frac{\int_{\Omega}\abs{\nabla v}^2\dLL^n}{\int_{\Omega}\abs{v}^2\dLL^n}\right\vert
    v\in V^{\varphi}\backslash\{0\}
    \right\}=:\lambda_1^{0,\varphi}.
\end{align}
is attained and any minimizer $w\in V^{\varphi}\backslash\left\{0\right\}$ is an eigenfunction of the limit problem \eqref{EVPS} to the eigenvalue $\lambda_1^{0,\varphi}$ in the sense of Definition~\ref{DEF:EVLimit}\ref{LimitEV}. 
\item $\lambda_1^{0,\varphi}>0$ is the smallest eigenvalue of the limit problem \eqref{EVPS} in the sense of Definition~\ref{DEF:EVLimit}\ref{LimitEV}.
\end{enumerate}
\end{Pro}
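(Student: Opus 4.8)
The plan is to establish the two claims about the limit eigenvalue problem \eqref{EVPS} by mimicking the standard spectral theory of the Dirichlet--Laplacian, transported to the closed subspace $V^\varphi\subseteq H^1_0(\Omega)$. First I would verify that $V^\varphi$, equipped with the $H^1_0$-inner product, is a Hilbert space: it is defined by the linear constraint $\eta=0$ a.e.\ in $\Omega\setminus E^\varphi$, which is preserved under $H^1_0$-convergence (a convergent subsequence converges a.e., so the constraint passes to the limit), hence $V^\varphi$ is a closed subspace. Since $\Omega$ is bounded, the Poincar\'e inequality gives $\int_\Omega |v|^2 \le C_P \int_\Omega |\nabla v|^2$ on $H^1_0(\Omega)$ and a fortiori on $V^\varphi$, so the Rayleigh quotient in \eqref{MINL1} is bounded below by $1/C_P>0$; in particular any minimizing value $\lambda_1^{0,\varphi}$ is strictly positive.

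For part (a), the existence of a minimizer is a direct compactness argument. Take a minimizing sequence $v_k\in V^\varphi$ normalized by $\int_\Omega |v_k|^2 = 1$; then $\int_\Omega |\nabla v_k|^2 \to \lambda_1^{0,\varphi}$ is bounded, so $(v_k)$ is bounded in $H^1_0(\Omega)$. Passing to a subsequence, $v_k \wto v$ weakly in $H^1_0(\Omega)$ and, by Rellich--Kondrachov, $v_k\to v$ strongly in $L^2(\Omega)$, so $\int_\Omega |v|^2 = 1$ (hence $v\neq 0$) and $v\in V^\varphi$ by closedness. Weak lower semicontinuity of $v\mapsto \int_\Omega|\nabla v|^2$ yields $\int_\Omega |\nabla v|^2 \le \liminf \int_\Omega |\nabla v_k|^2 = \lambda_1^{0,\varphi}$, so $v$ attains the minimum. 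That any such minimizer $w$ satisfies the weak formulation \eqref{LimitWF} is the usual first-order optimality (Euler--Lagrange) computation: for arbitrary $\eta\in V^\varphi$ and $t\in\R$, the function $t\mapsto \big(\int_\Omega|\nabla(w+t\eta)|^2\big)\big/\big(\int_\Omega|w+t\eta|^2\big)$ has a minimum at $t=0$; differentiating and using $\int_\Omega|\nabla w|^2 = \lambda_1^{0,\varphi}\int_\Omega|w|^2$ gives exactly \eqref{LimitWF} with $\lambda = \lambda_1^{0,\varphi}$. Thus $w$ is an eigenfunction in the sense of Definition~\ref{DEF:EVLimit}\ref{LimitEV}.

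For part (b), I must show $\lambda_1^{0,\varphi}$ is the smallest eigenvalue, i.e.\ that every eigenvalue $\lambda^{0,\varphi}$ of \eqref{EVPS} (in the sense of Definition~\ref{DEF:EVLimit}) satisfies $\lambda^{0,\varphi}\ge\lambda_1^{0,\varphi}$. If $\lambda^{0,\varphi}$ is an eigenvalue with nontrivial eigenfunction $w^{0,\varphi}$, then first note $w^{0,\varphi}\in V^\varphi$: indeed, testing \eqref{LimitWF} requires $w^{0,\varphi}\in V^\varphi$ in the first place, or one argues from Definition~\ref{DEF:EVLimit}\ref{LimitEV} that $w^{0,\varphi}\in H^1_0(\Omega)$ and vanishes off $E^\varphi$ — I would cite the precise formulation in \cite{GHKK} here. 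Testing \eqref{LimitWF} with $\eta = w^{0,\varphi}\in V^\varphi\setminus\{0\}$ gives $\int_\Omega |\nabla w^{0,\varphi}|^2 = \lambda^{0,\varphi}\int_\Omega |w^{0,\varphi}|^2$, hence by the variational characterization \eqref{MINL1}, $\lambda^{0,\varphi} = \big(\int_\Omega|\nabla w^{0,\varphi}|^2\big)\big/\big(\int_\Omega|w^{0,\varphi}|^2\big)\ge\lambda_1^{0,\varphi}$. Combined with the fact that $\lambda_1^{0,\varphi}$ is itself an eigenvalue from part (a), this shows it is the minimum. The only genuinely delicate point is the interplay between the two notions of eigenfunction in Definition~\ref{DEF:EVLimit} — namely checking that the "test-function" space and the "solution" space are consistent so that the Rayleigh-quotient argument applies — which is really bookkeeping inherited from \cite[Theorem 4.2]{GHKK}; everything else is the textbook min-max argument for the first Dirichlet eigenvalue.
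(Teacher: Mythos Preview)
Your argument is correct and is precisely the ``classical spectral theory'' route the paper invokes: the proposition is not proved in detail in the paper but is stated as a direct consequence of \cite[Theorem~4.2]{GHKK}, with the remark that under the weaker hypothesis $V^\varphi\neq\{0\}$ the result ``can be established analogously using classical spectral theory.'' Your direct-method argument (closedness of $V^\varphi$, Rellich--Kondrachov compactness, weak lower semicontinuity, Euler--Lagrange for the Rayleigh quotient, and testing an arbitrary eigenfunction against itself) is exactly that standard proof, and your bookkeeping concern about the solution space is resolved by Definition~\ref{DEF:EVLimit}(a), which already requires weak solutions to lie in $V^\varphi$.
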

\begin{Rem}\label{Rem:V}
Regarding the definition of the space $V^{\varphi}$ we make note of the following:
\begin{enumerate}[label = \textnormal{(\alph*)}]
\item Note that in \cite[Theorem 4.2]{GHKK}, we are in the situation that $V^{\varphi}$ is an infinite-dimensional vector space. In the present paper, we only assume that $V^{\varphi}$ is non-trivial, but the above proposition can be established analogously using classical spectral theory.
If $V^{\varphi}=\left\{0\right\}$, we set $\lambda_1^{0,\varphi}=+\infty$, which is consistent with the above proposition.
\item\label{CapvsLebes}
We point out that the Sobolev-like space $V^\varphi$ is not new to the literature, see \cite{bucur.buttazzo.ea:minimization, de-philippis.lamboley.ea:regularity, DePhilippis} as well as \cite[Section 4.5]{henrot.pierre:shape}. 
The space $V^\varphi$ is alternatively denoted by $\tilde{H}_0^1(E^\varphi)$. More generally, for any measurable set $E\subset \Omega$, we define
\begin{align*}
    \tilde{H}_0^1(E)\coloneqq \left\{u\in H_0^1(\Omega)\suchthat u=0 \text{ almost everywhere in }\Omega\backslash E\right\}.
\end{align*}
Here, the tilde indicates that this is not the canonical generalization of classical Sobolev spaces.
In fact, if $E\subset\Omega$ is \emph{open}, the classical Sobolev space $H^1_0(E)$ can be expressed as
\begin{align}\label{ClassSobo}
    H_0^1(E)=\left\{u\in H_0^1(\Omega)\suchthat \tilde{u}=0 \text{ quasi-everywhere in }\Omega\backslash E\right\},
\end{align}
where the left hand side is understood as closure of $C_0^\infty(E)$ with respect to the $H^1$-norm and $\tilde{u}$ denotes the unique quasi-continuous representative of $u\in H_0^1(\Omega)$, see \cite[Proposition~3.3.42]{henrot.pierre:shape}.
For this reason, the canonical extension of the classical Sobolev space $H^1_0(E)$ for arbitrary sets $E\subset\Omega$ of finite perimeter is actually defined as
\begin{align*}
    H_0^1(E):=\left\{u\in H_0^1(\Omega)\suchthat \tilde{u}=0 \text{ quasi-everywhere in }\Omega\backslash E\right\}.
\end{align*}
We point out that, in general, $H^1_0(E)$ does not coincide with $\tilde{H}_0^1(E^\varphi)$.
However, if $E\subset\Omega$ is an open set with Lipschitz boundary, it actually holds that
$H_0^1(E)=\tilde{H}_0^1(E)$ (see, e.g., \cite[Remark~2.3]{de-philippis.lamboley.ea:regularity}).
However, there are two main reasons why for our optimization problem on the sharp interface level, the space $V^\varphi = \tilde{H}_0^1(E^\varphi)$ is the adequate choice.

On the one hand, we will see in Proposition~\ref{PRO:ContEV} that if a sequence of phase-fields $(\varphi_\eps)_{\eps>0}\subset H^1(\Omega)$ converges in $L^1(\Omega)$ to some $\varphi\in BV(\Omega;\left\{0,1\right\})$ then there exists a function $u\in H^1(\Omega)$ such that, along a non-relabeled subsequence, it holds that
\begin{align*}
     \underset{\varepsilon\searrow 0}{\lim}
    \norm{w^{\eps,\varphi_{\varepsilon}}-u}_{H^1(\Omega)}=0
\end{align*}
and
\begin{align*}
    \underset{\varepsilon\searrow 0}{\lim}
    \int_{\Omega}b^{\varepsilon}(\varphi_{\varepsilon})\abs{w^{\eps,\varphi_{\varepsilon}}}^2\dLL^n=
    \int_{\Omega}b^0(\varphi)\abs{u}^2\dLL^n=0.
\end{align*}
In this case, $u$ plays the role of an eigenfunction for the sharp interface problem. Recalling the construction of the coefficient function $b^\eps$ in \ref{ASS:A:2} and \ref{ASS:A:3} we have $b^0(1)=0$ and $b^0(0)=+\infty$. Thus, the condition
\begin{align*}
    \int_{\Omega}b^0(\varphi)\abs{u}^2\dLL^n&=0
\end{align*}
is equivalent to $u=0$ \emph{almost everywhere} in $\left\{\varphi=0\right\}$. This motivates the usage of the Lebesgue measure instead of the capacity.

On the other hand, one could be tempted to employ the fact that for any measurable set $E\subset\Omega$, there exists a unique quasi-open set $\omega\subset\Omega$ such that
\begin{align*}
    \tilde{H}_0^1(E)=H_0^1(\omega),
\end{align*}
see \cite[Section 4.1]{GHKK}. Even though this is a crucial relation also used in the $\Gamma$-convergence proof in \cite[Theorem 4.10]{GHKK}, it does not allow us to replace the space $V^\varphi$ with the associated space $H_0^1(\omega^\varphi)$. This is due to the fact that the limit cost functional $J_\gamma^0$ that will be defined in \eqref{DEF:J0} involves a perimeter term and it is unclear how the perimeter changes when $E^\varphi$ is replaced by the quasi-open set $\omega$, see also the discussion in \cite[Remark~2.1]{DePhilippis}.
\blk
\end{enumerate}
\end{Rem}

The following continuity result for the principal eigenvalues in the limit $\eps\to 0$ was established in \cite[Lemma 4.4]{GHKK}. 

\begin{Pro}\label{PRO:ContEV}
Let $\left(\varphi_{\eps}\right)_{\eps>0}\subset L^1(\Omega)$ with $\varphi_{\eps}\in [0,1]$ almost everywhere in $\Omega$ and suppose that $\varphi\in BV(\Omega,\left\{0,1\right\})$ with $V^{\varphi}\neq \left\{0\right\}$ such that
\begin{align*}
    \limse \norm{\varphi_{\eps}-\varphi}_{L^1(\Omega)}=0\tl{.}
\end{align*}
Moreover, we demand the additional convergence rate
\begin{align*}
   \norm{\varphi_{\eps}-\varphi}_{L^1\big(E^{\varphi}\cap 
    \left\{\varphi_{\eps}<\frac12 \right\}\big)}=\mathcal{O}(\eps).
\end{align*}
Then, there exists an eigenfunction $u\in V^{\varphi}$ of the limit problem \eqref{EVPS} to the eigenvalue $\lambda_1^{0,\varphi}$ such that
\begin{align*}
    \limse \int_{\Omega}b^{\eps}(\varphi_{\eps})\bigabs{w_1^{\eps,\varphi_{\eps}}}^2\dLL^n=
    \int_{\Omega}b^0(\varphi)\abs{u}^2\dLL^n=0,
\end{align*}
as well as
\begin{align*}
    \limse \norm{w_1^{\eps,\varphi_{\eps}}-u}_{H^1(\Omega)}=0\quad\text{and}\quad
    \limse \lambda_1^{\eps,\varphi_{\eps}}=\lambda_1^{0,\varphi}. 
\end{align*}
\end{Pro}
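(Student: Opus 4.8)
The plan is to prove Proposition~\ref{PRO:ContEV} in three stages: first extract compactness from a uniform energy bound, then identify the limit object and its admissibility, and finally upgrade the convergences and identify the limit eigenvalue. Throughout, the decisive input is the assumed $\mathcal{O}(\eps)$-rate of convergence of $\varphi_\eps$ towards $\varphi$ on the "bad" set $E^\varphi \cap \{\varphi_\eps < \tfrac12\}$, which is precisely what prevents the penalization term $\int_\Omega b^\eps(\varphi_\eps)|w_1^{\eps,\varphi_\eps}|^2\dLL^n$ from blowing up along the sequence.

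First I would test \eqref{CHAR:CF} with a fixed competitor. Pick an eigenfunction $u_0\in V^\varphi$ for $\lambda_1^{0,\varphi}$ normalized in $L^2$; since $u_0 = 0$ a.e.\ on $\Omega\backslash E^\varphi$, one estimates $\int_\Omega b^\eps(\varphi_\eps)|u_0|^2 \dLL^n$ using $b^\eps \le \beta^\eps$ on $\{\varphi_\eps \text{ small}\} \cap E^\varphi$ together with $b^\eps(\varphi_\eps) \le b^\eps(\tfrac12) = \mathcal{O}(1)$ elsewhere on $E^\varphi$; combining $\beta^\eps = o(\eps^{-\kappa})$ (with $\kappa \le 2/n$) with the $\mathcal{O}(\eps)$-rate on the bad set, this term tends to $0$. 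Hence $\limsup_{\eps\to 0}\lambda_1^{\eps,\varphi_\eps} \le \lambda_1^{0,\varphi}$ and, in particular, $(w_1^{\eps,\varphi_\eps})$ is bounded in $H^1_0(\Omega)$ and $\int_\Omega b^\eps(\varphi_\eps)|w_1^{\eps,\varphi_\eps}|^2 \dLL^n$ is bounded. By Rellich, up to a subsequence $w_1^{\eps,\varphi_\eps} \wto u$ in $H^1_0(\Omega)$ and strongly in $L^2(\Omega)$, so $\|u\|_{L^2(\Omega)} = 1$.

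Next I would show $u \in V^\varphi$, i.e.\ $u = 0$ a.e.\ on $\Omega\backslash E^\varphi = \{\varphi = 0\}$. Since $b^\eps(\varphi_\eps) \to b^0(\varphi)$ and $b^0(0) = +\infty$, Fatou applied to $\int_\Omega b^\eps(\varphi_\eps)|w_1^{\eps,\varphi_\eps}|^2\dLL^n$ (using the $L^2$-convergence of $w_1^{\eps,\varphi_\eps}$ and, after a further subsequence, a.e.\ convergence, plus the monotonicity $b^\delta \ge b^\eps$ for $\delta \le \eps$ to handle the pointwise liminf of $b^\eps(\varphi_\eps)$ carefully via the convergence $\varphi_\eps \to \varphi$ in $L^1$ hence a.e.) forces $\int_{\{\varphi=0\}} (+\infty)|u|^2 = 0$, whence $u = 0$ a.e.\ on $\{\varphi = 0\}$. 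Then I pass to the limit in the weak formulation \eqref{WeakEVe}: for a test function $\eta \in V^\varphi \subset H^1_0(\Omega)$, the gradient term converges by weak $H^1$-convergence, the right-hand side by strong $L^2$-convergence, and the penalization term $\int_\Omega b^\eps(\varphi_\eps) w_1^{\eps,\varphi_\eps}\eta \dLL^n$ vanishes in the limit (Cauchy--Schwarz against the bounded quantity $\int_\Omega b^\eps(\varphi_\eps)|\eta|^2$, which tends to $0$ exactly as the competitor estimate above). This yields \eqref{LimitWF}, so $u$ is an eigenfunction with eigenvalue $\liminf \lambda_1^{\eps,\varphi_\eps} \ge \lambda_1^{0,\varphi}$ by the variational characterization \eqref{MINL1}; combined with the $\limsup$ bound this gives $\lambda_1^{\eps,\varphi_\eps} \to \lambda_1^{0,\varphi}$ and identifies $u$ as an eigenfunction for $\lambda_1^{0,\varphi}$.

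Finally, to upgrade weak to strong $H^1$-convergence, I would use the energy identity: from \eqref{WeakEVe} with $\eta = w_1^{\eps,\varphi_\eps}$, $\int_\Omega |\Grad w_1^{\eps,\varphi_\eps}|^2 = \lambda_1^{\eps,\varphi_\eps} - \int_\Omega b^\eps(\varphi_\eps)|w_1^{\eps,\varphi_\eps}|^2 \to \lambda_1^{0,\varphi} = \int_\Omega |\Grad u|^2$ (the limit being the energy identity for $u$ via \eqref{LimitWF} with $\eta = u$), so the $H^1$-seminorms converge, which together with weak convergence gives strong convergence in $H^1_0(\Omega)$; along the way the penalization term is squeezed to $0$. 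A standard subsequence argument removes the passage to subsequences since every subsequence has a further subsequence with the same limit $\lambda_1^{0,\varphi}$. The main obstacle is the careful bookkeeping in the penalization estimate: one must combine the three structural facts about $b^\eps$ — the upper bound $\beta^\eps = o(\eps^{-\kappa})$, the pointwise limit $b^\eps \to b^0$ with $b^0(\tfrac12) < \infty$, and the monotonicity in $\eps$ — with the quantitative $\mathcal{O}(\eps)$-rate hypothesis, splitting $E^\varphi$ into the region where $\varphi_\eps \ge \tfrac12$ (where $b^\eps(\varphi_\eps)$ is uniformly bounded) and its complement (where the $\mathcal{O}(\eps)$-rate beats $\beta^\eps$); getting this split and the corresponding estimates right is the technical heart of the argument, and it is exactly where the dimension-dependent exponent $\kappa$ in \ref{ASS:A:3} is used.
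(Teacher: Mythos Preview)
The paper does not give its own proof of this proposition; it is quoted from \cite[Lemma~4.4]{GHKK}, with only the remark that the infinite-dimensionality assumption on $V^{\varphi}$ there can be relaxed to $V^{\varphi}\neq\{0\}$ since only the principal eigenvalue is at stake. Your three-stage outline---competitor estimate for the $\limsup$ bound and compactness, Fatou to place the weak $H^1$-limit in $V^{\varphi}$, passage to the limit in the weak formulation \eqref{WeakEVe}, and energy convergence to upgrade to strong $H^1_0$---is the standard route for such spectral-continuity results and is, in all likelihood, the structure of the argument in \cite{GHKK}.

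One technical point deserves more care than you give it. In the competitor estimate $\int_{\Omega} b^{\eps}(\varphi_{\eps})|u_0|^2\dLL^n\to 0$ (and the analogous estimate for $\eta\in V^{\varphi}$ when passing to the limit in \eqref{WeakEVe}), you handle the ``bad'' set $E^{\varphi}\cap\{\varphi_{\eps}<\tfrac12\}$ correctly: the $\mathcal O(\eps)$-rate gives $\LL^n$-measure $\mathcal O(\eps)$, and then $\beta^{\eps}=o(\eps^{-\kappa})$ together with the Sobolev embedding $H^1_0(\Omega)\hookrightarrow L^{2^*}(\Omega)$ and H\"older (which is where the dimension-dependent $\kappa$ enters) kills the term. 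But on the complementary set $E^{\varphi}\cap\{\varphi_{\eps}\ge\tfrac12\}$ you only record $b^{\eps}(\varphi_{\eps})\le b^{\eps}(\tfrac12)=\mathcal O(1)$, which yields boundedness, not convergence to zero. To actually drive this piece to zero you must split once more, say at level $1-\delta$, use absolute continuity of $\int|u_0|^2$ on the set $\{\tfrac12\le\varphi_{\eps}<1-\delta\}$ (whose measure tends to $0$ by $L^1$-convergence), and then let $\delta\to 0$ using $b^0(1-\delta)\to b^0(1)=0$. The last step requires left-continuity of $b^0$ at $1$, which does not follow from \ref{ASS:A:2}--\ref{ASS:A:3} as literally stated here (one can build admissible $b^{\eps}$ with $b^0(1^-)>0$). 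This is presumably secured in \cite{GHKK} by an additional structural hypothesis on $b^{\eps}$ or $b^0$; your write-up should invoke it explicitly rather than leave the claim ``this term tends to $0$'' unjustified.
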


We point out that in \cite{GHKK}, the above result was established for any eigenvalue in the case where $V^{\varphi}$ is an infinite-dimensional vector space. However, as we only consider the principal eigenvalue, the Rayleigh quotient merely needs to be minimized over the set $V^\varphi\backslash\{0\}$ (cf.~\eqref{MINL1}). It is thus clear that the proof of \cite{GHKK} also works under the weaker assumption $V^\varphi \neq \{0\}$.

Since $\lambda_1^{0,\varphi}=+\infty$ if $V^{\varphi}=\{0\}$, the following corollary is a trivial consequence, but we state it here for the sake of completeness as this provides us with the upper semicontinuity of the principal eigenvalue even if the shape prescribed by $\varphi\in BV(\Omega,\left\{0,1\right\})$ does not admit an eigenvalue.
\begin{Cor}\label{UpperContEV}
    Let the assumptions of the previous theorem be fulfilled, but allow for the case $V^{\varphi}=\left\{0\right\}$.
    Then, it still holds that
    \begin{align*}
        \limsupse \lambda_1^{\eps,\varphi_{\eps}}\le \lambda_1^{0,\varphi}.
    \end{align*}
\end{Cor}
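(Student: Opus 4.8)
The plan is to split into the two cases $V^{\varphi}\neq\{0\}$ and $V^{\varphi}=\{0\}$; the first is covered verbatim by Proposition~\ref{PRO:ContEV}, and the second is vacuous.

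First I would note that the hypotheses imposed in the corollary are exactly those of Proposition~\ref{PRO:ContEV}, with the single exception that $V^{\varphi}$ is now permitted to be trivial; in particular, the convergence $\limse\norm{\varphi_{\eps}-\varphi}_{L^1(\Omega)}=0$ and the rate condition on $E^{\varphi}\cap\{\varphi_{\eps}<\tfrac12\}$ are still in force. Hence, in the case $V^{\varphi}\neq\{0\}$, Proposition~\ref{PRO:ContEV} applies unchanged and yields $\limse\lambda_1^{\eps,\varphi_{\eps}}=\lambda_1^{0,\varphi}$. Since this limit exists, it follows a fortiori that $\limsupse\lambda_1^{\eps,\varphi_{\eps}}=\lambda_1^{0,\varphi}\le\lambda_1^{0,\varphi}$, which is the claimed inequality.

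It then remains to treat the case $V^{\varphi}=\{0\}$. By the convention fixed in the remark following Proposition~\ref{PRO:ContEV}, one has $\lambda_1^{0,\varphi}=+\infty$ in this situation, so the asserted inequality $\limsupse\lambda_1^{\eps,\varphi_{\eps}}\le+\infty$ is trivially satisfied (the numbers $\lambda_1^{\eps,\varphi_{\eps}}$ being finite by Proposition~\ref{PROP:EVP}). Combining the two cases completes the argument. I do not anticipate any real obstacle here: the entire content of the corollary is carried by Proposition~\ref{PRO:ContEV}, and the only point that merits being spelled out is why relaxing the non-triviality assumption on $V^{\varphi}$ costs nothing, namely that in the degenerate case it forces the right-hand side $\lambda_1^{0,\varphi}$ to take its maximal value $+\infty$.
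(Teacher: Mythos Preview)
Your proposal is correct and follows exactly the paper's own reasoning: the paper simply remarks that since $\lambda_1^{0,\varphi}=+\infty$ when $V^{\varphi}=\{0\}$, the corollary is a direct consequence of Proposition~\ref{PRO:ContEV}. Your case split makes this explicit and adds nothing beyond what the paper intends.
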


Finally, we consider the limit cost functional
\begin{align}\label{DEF:J0}
    &J^0_{\gamma}(\varphi)\coloneqq
    \begin{cases}
        \begin{aligned}
            &\lambda_1^{0,\varphi}+
            \gamma c_0  \left(
                P_{\Omega}(E^{\varphi})+\int_{\partial\Omega}\varphi_{\vert{\del\Omega}}\text{\,d}\mathcal{H}^{n-1}
                \right)
            &&\quad\text{if }
            \varphi\in \Phi_{m}^0,\\
            &+\infty
            &&\quad\text{if }
            \varphi\in L^1(\Omega)\backslash \Phi_{m}^0,
        \end{aligned}
    \end{cases}
\end{align}
where
$c_0=\int_0^1\sqrt{2\psi(t)}\text{\,d}t$
and $\varphi_{\vert\partial\Omega}\in L^1(\partial\Omega)$ denotes the trace of the $BV$ function $\varphi$ (see Section~\ref{Sub:BV}).

\begin{Rem}
    We note that in \cite{Owen}, where the phase-fields are subject to a more complex inhomogeneous space-dependent Dirichlet boundary condition, the corresponding term in the limit cost functional resulting from the Ginzburg--Laundau energy is written (transferred to our notation) as
    \begin{align*}
        \int_{\Omega}\abs{\nabla \Psi(\varphi)}\dLL^n+\int_{\partial\Omega}\abs{\Psi(\varphi_{\vert\del\Omega})}\dHH^{n-1}.
    \end{align*}
    Here, the function $\Psi$ is defined by
    \begin{align}\label{DEF:Root}
    \Psi(s)\coloneqq \int_0^s\sqrt{2\psi(t)}\text{\,d}t,
    \end{align}
    and $\int_{\Omega}\abs{\nabla \Psi(\varphi)}\dLL^n$ denotes the variation of $\Psi(\varphi)\in L^1(\Omega)$ as given in Section~\ref{Sub:BV}. As obviously $\Psi(\varphi)=\Psi(1)\chi_{E^{\varphi}}$ in $BV(\Omega,\{0,1\})$ we obtain
    \begin{align*}
        \int_{\Omega}\abs{\nabla \Psi(\varphi)}\dLL^n=c_0P_{\Omega}(E^{\varphi}).
    \end{align*}
    Furthermore, due to the definition of the trace in \eqref{DEF:TR} and $\varphi\in BV(\Omega,\left\{0,1\right\})$, we see also that $\varphi_{\vert\del\Omega}$ only attains the values $0$ and $1$. Hence, we have $\Psi(\varphi_{\vert\del\Omega})=\Psi(1)\varphi_{\vert\del\Omega}$ in $L^1(\partial\Omega)$, which yields
    \begin{align*}
        \int_{\partial\Omega}\abs{\Psi(\varphi_{\vert\del\Omega})}\dHH^{n-1}=c_0\int_{\partial\Omega}\varphi_{\vert\del\Omega}\dHH^{n-1}.
    \end{align*}
    Note that, as we are imposing a homogeneous Dirichlet boundary condition, we do not need to rely on the very technical construction of a recovery sequence presented in \cite{Owen} as we can simply perform a cut-off procedure as in \cite{BourdinChambolle}. 
    The idea in \cite{BourdinChambolle} is to approximate any finite perimeter set by truncated sets that are compactly contained within $\Omega$. For these truncated sets, we then perform a diffuse interface approximation in the spirit of \cite{Modica,Sternberg}.
    Using this approach, the need for the additional boundary integral in the limit cost functional can be clearly seen:
    In the course of this approximation, the boundaries of the truncated sets are getting closer and closer to the boundary of $\Omega$. Therefore, the whole boundary of the limit set has to be perceived by the limit energy. For more details, we refer to the proof of Theorem~\ref{GammaOwen} given in Section~\ref{SECT:PROOF}.
\end{Rem}

The previous discussion allows us to state the desired theorem which states the convergence of minimizers as $\eps$ tends to zero.

\begin{Thm}\label{THM:EpsCon}
Let $\varphi_0\in \Phi^0_m$ be the characteristic function of the ball centered at the origin with volume $m\abs{\Omega}$ and let $\left(\varphi_\eps\right)_{\eps>0}$ be a sequence of minimizers of \eqref{OP}.\\ Then
\begin{align*}
    \limse\norm{\varphi_\eps-\varphi_0}_{L^1(\Omega)}=0,\quad \limse J^{\eps}_{\gamma}(\varphi_\eps)=J^0_{\gamma}(\varphi_0),
\end{align*}
and $\varphi_0$ is a minimizer of $J^0_\gamma$.
\end{Thm}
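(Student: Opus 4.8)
The plan is to prove Theorem~\ref{THM:EpsCon} as a standard consequence of a $\Gamma$-convergence result (Theorem~\ref{GammaOwen}, mentioned in the introduction) together with the symmetry information from Theorem~\ref{THM:MIN} and the continuity of eigenvalues from Proposition~\ref{PRO:ContEV} and Corollary~\ref{UpperContEV}. First I would establish compactness: since each $\varphi_\eps$ minimizes $J^\eps_\gamma$ over $\Phi_m$, testing against a fixed competitor (e.g.\ the cut-off recovery sequence for $\varphi_0$ constructed \`a la \cite{BourdinChambolle,Modica,Sternberg}) shows $\sup_{\eps>0} J^\eps_\gamma(\varphi_\eps)<\infty$, hence $\sup_\eps \EE^\eps(\varphi_\eps)<\infty$. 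By the Modica--Mortola type compactness (in the form of \cite[Proposition~3(a)]{Modica} / \cite[Remark~(1.35)]{Sternberg} / \cite[Theorem~3.7]{BloweyElliott}, applicable because of the box constraint $\varphi_\eps\in[0,1]$), every subsequence of $(\varphi_\eps)$ has a further subsequence converging in $L^1(\Omega)$ to some $\varphi_*\in BV(\Omega;\{0,1\})$ with $\fint_\Omega\varphi_* = m$, i.e.\ $\varphi_*\in\Phi_m^0$.

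Next I would identify the limit. By Theorem~\ref{THM:MIN}, each $\varphi_\eps$ is symmetric-decreasing; passing this property to the $L^1$-limit, $\varphi_*$ is a symmetric-decreasing function in $BV(\Omega;\{0,1\})$, hence $\varphi_* = \chi_B$ for a ball $B$ centered at the origin, and the mass constraint forces $|B| = m|\Omega|$, so $\varphi_* = \varphi_0$. Since the limit is the same for every subsequence, the full sequence converges: $\varphi_\eps \to \varphi_0$ in $L^1(\Omega)$. This is the place where the symmetry result of Theorem~\ref{THM:MIN} does the essential work that replaces a more delicate argument.

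Then I would prove the energy convergence and minimality via the usual $\Gamma$-liminf / $\Gamma$-limsup sandwich. For the lower bound: the $\Gamma$-convergence (Theorem~\ref{GammaOwen}) of $\gamma\EE^\eps$ to $\gamma c_0\big(P_\Omega(E^{\varphi_0}) + \int_{\partial\Omega}(\varphi_0)_{\vert\partial\Omega}\dHH^{n-1}\big)$ gives $\liminf_\eps \gamma\EE^\eps(\varphi_\eps) \ge \gamma c_0 P_\Omega(E^{\varphi_0})$ (the boundary trace of $\varphi_0$ vanishes since $B\ssubset\Omega$), while Proposition~\ref{PRO:ContEV} gives $\lim_\eps \lambda_1^{\eps,\varphi_\eps} = \lambda_1^{0,\varphi_0}$ --- here I must check that the hypotheses of Proposition~\ref{PRO:ContEV}, in particular the rate $\|\varphi_\eps-\varphi_0\|_{L^1(E^{\varphi_0}\cap\{\varphi_\eps<1/2\})} = \mathcal{O}(\eps)$, follow from $\sup_\eps\EE^\eps(\varphi_\eps)<\infty$ and the symmetry/structure of $\varphi_\eps$ (this is exactly Theorem~\ref{THM:DifInt}: the interfacial layer has measure $O(\eps)$, and on $E^{\varphi_0}$ away from $\partial B$ one has $\varphi_\eps\approx 1$). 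Together, $\liminf_\eps J^\eps_\gamma(\varphi_\eps) \ge J^0_\gamma(\varphi_0)$. For the upper bound and minimality: given any $\tilde\varphi\in\Phi_m^0$, Theorem~\ref{GammaOwen} supplies a recovery sequence $\tilde\varphi_\eps\in\Phi_m$ with $\tilde\varphi_\eps\to\tilde\varphi$ in $L^1$, $\limsup_\eps \EE^\eps(\tilde\varphi_\eps) \le c_0\big(P_\Omega(E^{\tilde\varphi}) + \int_{\partial\Omega}\tilde\varphi_{\vert\partial\Omega}\dHH^{n-1}\big)$, and satisfying the rate hypothesis, so by Corollary~\ref{UpperContEV} (or Proposition~\ref{PRO:ContEV}), $\limsup_\eps J^\eps_\gamma(\tilde\varphi_\eps) \le J^0_\gamma(\tilde\varphi)$. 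Minimality of $\varphi_\eps$ gives $J^\eps_\gamma(\varphi_\eps) \le J^\eps_\gamma(\tilde\varphi_\eps)$, so $\limsup_\eps J^\eps_\gamma(\varphi_\eps) \le J^0_\gamma(\tilde\varphi)$ for every $\tilde\varphi$; combined with the liminf bound this yields $J^0_\gamma(\varphi_0) \le \liminf_\eps J^\eps_\gamma(\varphi_\eps) \le \limsup_\eps J^\eps_\gamma(\varphi_\eps) \le \inf_{\Phi_m^0} J^0_\gamma \le J^0_\gamma(\varphi_0)$, forcing equality everywhere: $\lim_\eps J^\eps_\gamma(\varphi_\eps) = J^0_\gamma(\varphi_0)$ and $\varphi_0$ is a minimizer of $J^0_\gamma$.

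The main obstacle I anticipate is verifying the quantitative hypothesis of Proposition~\ref{PRO:ContEV} --- the $\mathcal{O}(\eps)$-rate on $E^{\varphi_0}\cap\{\varphi_\eps<1/2\}$ --- both for the minimizing sequence and for the recovery sequence; for the minimizers this should come cleanly from Theorem~\ref{THM:DifInt} (the layer $\{\delta\le\varphi_\eps\le 1-\delta\}$ has $\LL^n$-measure $O(\eps)$, and $\varphi_\eps=\varphi_\eps^*$ is monotone in $|x|$, so the sub-$1/2$ region inside $B$ is a thin annulus abutting $\partial B$ of width $O(\eps)$), and for the recovery sequence it is built into the one-dimensional profile from \eqref{ODE*}; I would make sure these estimates are stated precisely before invoking the continuity proposition. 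A secondary (minor) point is confirming that the boundary-trace term in $J^0_\gamma(\varphi_0)$ vanishes, which is immediate because $E^{\varphi_0}=B$ is compactly contained in $\Omega$, so $(\varphi_0)_{\vert\partial\Omega}=0$.
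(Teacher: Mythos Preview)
Your overall strategy---compactness, identification of the limit via the symmetry of Theorem~\ref{THM:MIN}, then a $\Gamma$-liminf/$\Gamma$-limsup sandwich---is correct and close in spirit to the paper. The paper, however, packages the last step differently: once the $L^1$-convergence $\varphi_\eps\to\varphi_0$ is established (via Modica--Mortola compactness and the nonexpansivity of rearrangement, Lemma~\ref{LEM:SDR}\ref{item:f* nonexpansive}, to pass $\varphi_\eps=\varphi_\eps^*$ to the limit), it simply invokes Theorem~\ref{MainGamma} ($J^\eps_\gamma\overset{\Gamma}{\to}J^0_\gamma$) and the standard fact that $\Gamma$-convergence together with compactness yields convergence of minimizers and of minimum values. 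This gives both $J^\eps_\gamma(\varphi_\eps)\to J^0_\gamma(\varphi_0)$ and the minimality of $\varphi_0$ in one line, without unbundling the eigenvalue and the perimeter parts.

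Your explicit sandwich is essentially a re-derivation of that standard fact, but by treating the eigenvalue separately via Proposition~\ref{PRO:ContEV} you create the rate obstacle you flag. Two remarks on this. First, the $\liminf$ inequality for the eigenvalue inside Theorem~\ref{GammaOwen} (and hence Theorem~\ref{MainGamma}) is \emph{not} proved via Proposition~\ref{PRO:ContEV}; it uses a direct weak-compactness and Fatou argument on the eigenfunctions that requires no rate on $\norm{\varphi_\eps-\varphi_0}_{L^1}$. So you can---and should---bypass the rate entirely for the lower bound by citing the $\Gamma$-$\liminf$ of Theorem~\ref{MainGamma} for the full functional. Second, your proposed verification of the rate from Theorem~\ref{THM:DifInt} is not as clean as you suggest: that theorem controls only $\LL^n(\{\delta\le\varphi_\eps\le 1-\delta\})$, while $E^{\varphi_0}\cap\{\varphi_\eps<\tfrac12\}$ also contains $E^{\varphi_0}\cap\{\varphi_\eps<\delta\}$; combining the layer estimate with the mass constraint leaves a residual term of order $\delta|\Omega|$, and for smooth double-well potentials (where $\alpha_\delta\sim\delta^2$) letting $\delta=\delta(\eps)\to 0$ does not recover $\mathcal{O}(\eps)$.

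One small correction: the recovery sequence with values in $\Phi_m$ (i.e., satisfying the mean constraint) is supplied by Theorem~\ref{MainGamma}, not Theorem~\ref{GammaOwen}; the latter produces recovery sequences only in $H^1_0(\Omega;[0,1])$, and the passage to $\Phi_m$ requires the diffeomorphism adjustment carried out in the proof of Theorem~\ref{MainGamma}.
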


The proof of Theorem~\ref{THM:EpsCon} can be found in Section~\ref{SECT:PROOF}.
As a direct consequence we finally obtain the classical Faber--Krahn theorem in our framework by sending the surface tension parameter $\gamma$ to zero.

\begin{Cor}[Faber--Krahn theorem for $BV$ functions and sets of finite perimeter]\label{COR:FK}
    Let $\varphi_0\in \Phi^0_m$ be the characteristic function of the ball centered at the origin with volume $m\abs{\Omega}$. Then, it holds that 
    \begin{align*}
        \lambda_1^{0,\varphi_0}
        =\min\left\{\lambda_1^{0,\varphi}\suchthat \varphi\in \Phi^0_m\right\}.
    \end{align*}
    Formulated
    in the framework of sets of finite perimeter it thus holds that
    \begin{align*}
        \lambda_1^0(B)=\min\left\{ \lambda_1^0(E)\suchthat E\subset\Omega \text{ measurable}, P_{\Omega}(E)<\infty, \abs{E}=m\abs{\Omega}\right\},
    \end{align*}
    where $B\subset\Omega$ is the ball centered at the origin with volume $m\abs{\Omega}$ and 
    \begin{align*} 
    \lambda_1^0(E)\coloneqq \lambda_1^{0,\chi_E}=
    \min \left\{\left.
    \frac{\int_{\Omega}\abs{\nabla v}^2\dLL^n}{\int_{\Omega}\abs{v}^2\dLL^n}\right\vert
    v\in \tilde{H}_0^1(E)\backslash\{0\}
    \right\}
    \end{align*}
    for every finite perimeter set $E\subset\Omega$.
\end{Cor}
We
point out that this result slightly extends the classical Faber--Krahn theorem, which merely states that any open ball is a minimizer among all \textit{open} sets of the same volume. Of course, Corollary~\ref{COR:FK} can also be obtained by taking the classical Faber--Krahn theorem for granted and then performing the regularization of finite perimeter sets as in the proof of Theorem~\ref{GammaOwen}. It is further worth mentioning that the classical Faber--Krahn theorem is stated without the constraint of a surrounding design domain which makes the analysis more delicate, see, for example, \cite{de-philippis.lamboley.ea:regularity, mazzoleni.pratelli:existence}.
We further want to mention that in \cite{BucurFreitas}, it was shown  that the Faber--Krahn theorem remains valid if the minimization problem is formulated on the class of \text{quasi-open} sets.

Nevertheless, the purpose of this paper is not to generalize the Faber--Krahn theorem but to understand the classical Faber--Krahn theorem within our phase-field approach as the sharp interface limit of the Faber--Krahn theorem on the diffuse interface level from Theorem~\ref{THM:MIN}.
 
In this proof, the key step is to show that $J^{\eps}_{\gamma}\overset{\Gamma}{\to} J^0_{\gamma}$ as $\eps\to 0$, that is, $J^{\eps}_{\gamma}$ converges to $J^0_{\gamma}$ in the sense of $\Gamma$-convergence. Our strategy will be similar to that in \cite{GHKK}. 

The first step in the proof of Theorem~\ref{THM:EpsCon} is to establish the $\Gamma$-convergence for slightly modified functionals $F_{\eps}^{\gamma}$, where the corresponding set of admissible phase-fields does not contain a volume constraint. In the proof, we need to revisit the construction of the recovery sequence in \cite{Sternberg} as we allow for more general potentials $\psi$. In order to tackle the Dirichlet boundary constraint hidden in $H_0^1(\Omega)$, we apply the idea of \cite[Theorem 3.1]{BourdinChambolle}. More precisely, to construct a recovery sequence for any given $\varphi\in BV(\Omega;\left\{0,1\right\})$, we approximate the corresponding set $\{\varphi=1\}$ by truncated sets which are compactly contained in $\Omega$. The $\Gamma$-convergence result is stated by the following theorem.

\begin{Thm}\label{GammaOwen}
    For any $\eps,\gamma>0$, let the functions $F_{\eps}^{\gamma}, F_{0}^{\gamma}: L^1(\Omega)\to \mathbb{R}\cup \left\{+\infty\right\}$ be defined as
    \begin{align*}
        F_{\eps}^{\gamma}(\varphi)=
        \begin{cases}
            \lambda_1^{\eps,\varphi_{\eps}} + \gamma \int_{\Omega}\frac{\eps}{2}\abs{\nabla \varphi}^2+\frac{1}{\eps}\psi(\varphi)\dLL^n
            &\quad \textup{if }\varphi\in H_0^1(\Omega;[0,1]),\\
         +\infty &\quad\textup{else},   
        \end{cases}
    \end{align*}
    and
    \begin{align*}
        F_{0}^{\gamma}(\varphi)=
        \begin{cases}
        \lambda_1^{0,\varphi}+c_0\gamma \left(P_{\Omega}(E^{\varphi})+\int_{\partial\Omega}\varphi_{\vert\del\Omega} \dHH^{n-1}\right)
            &\quad \textup{if }\varphi\in BV(\Omega;\left\{0,1\right\}),\\
         +\infty &\quad\textup{else}   .
        \end{cases}
    \end{align*}
    Then $F_{\eps}^{\gamma}\overset{\Gamma}{\to}F_{0}^{\gamma}$.
\end{Thm}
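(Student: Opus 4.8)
The plan is to establish $\Gamma$-convergence by verifying the two standard conditions: the $\liminf$-inequality (lower bound) and the existence of a recovery sequence (upper bound), with respect to strong $L^1(\Omega)$-convergence. Throughout, I would decouple the eigenvalue term $\lambda_1^{\eps,\varphi}$ from the Ginzburg--Landau term $\gamma\EE^\eps(\varphi)$, since the former is continuous along suitable sequences (by Proposition~\ref{PRO:ContEV} and Corollary~\ref{UpperContEV}), while the latter carries the genuine $\Gamma$-convergence content (the Modica--Mortola / Sternberg analysis). For the $\liminf$-inequality, given $\varphi_\eps \to \varphi$ in $L^1(\Omega)$ with $\liminf F_\eps^\gamma(\varphi_\eps) < \infty$, I would first use the Modica-type compactness results (one of \cite[Proposition 3]{Sternberg}, \cite[Remark (1.35)]{Sternberg}, or \cite[Theorem 3.7]{BloweyElliott}, as cited in Remark~\ref{Rem:Pot}, using the box constraint $\varphi_\eps\in[0,1]$ to bypass growth conditions) to deduce $\varphi\in BV(\Omega;\{0,1\})$. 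Then I would run the interior lower-bound argument of Modica/Sternberg: using $\Psi$ from \eqref{DEF:Root} and the chain rule, $\int_\Omega |\nabla(\Psi\circ\varphi_\eps)| \le \int_\Omega \frac{\eps}{2}|\nabla\varphi_\eps|^2 + \frac1\eps\psi(\varphi_\eps)$ by Young's inequality, and lower semicontinuity of total variation gives $c_0 P_\Omega(E^\varphi) \le \liminf \gamma^{-1}\cdot(\text{GL part})$, but on the design domain I also need the boundary term. Here I would adapt the Bourdin--Chambolle observation: since $\varphi_\eps \in H^1_0(\Omega)$, extending by zero keeps it in $H^1(\R^n)$, so the total variation of the extension of $\Psi\circ\varphi_\eps$ picks up the boundary trace, yielding $c_0\big(P_\Omega(E^\varphi) + \int_{\partial\Omega}\varphi_{\vert\partial\Omega}\dHH^{n-1}\big) \le \liminf \gamma^{-1}\cdot(\text{GL part})$. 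For the eigenvalue term, $\liminf \lambda_1^{\eps,\varphi_\eps} \ge \lambda_1^{0,\varphi}$ follows because, along the recovery sequence producing the finite $\liminf$, the $L^1$-convergence on the set $E^\varphi\cap\{\varphi_\eps<1/2\}$ is automatically controlled (the GL bound forces the transition layer to have measure $O(\eps)$, as in Theorem~\ref{THM:DifInt}), so Proposition~\ref{PRO:ContEV} applies, or else $\lambda_1^{0,\varphi}=+\infty$ and there is nothing to prove; additivity of the two liminfs gives the full lower bound.

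For the recovery sequence, fix $\varphi\in BV(\Omega;\{0,1\})$ (otherwise nothing to prove) and set $E=E^\varphi$. Following \cite[Theorem 3.1]{BourdinChambolle}, I would first approximate $\chi_E$ strictly in $BV$ by characteristic functions $\chi_{E_j}$ of smooth (or at least finite-perimeter) sets $E_j \ssubset \Omega$ compactly contained in $\Omega$; by continuity of the trace operator under strict convergence \cite[Theorem 3.88]{AFP} and since $P_\Omega(E_j)\to P_\Omega(E)$ while the traces of $\chi_{E_j}$ vanish, one checks $c_0 P_\Omega(E_j) \to c_0\big(P_\Omega(E) + \int_{\partial\Omega}\varphi_{\vert\partial\Omega}\dHH^{n-1}\big)$. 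For each fixed $E_j$, I would build the classical Modica--Sternberg recovery sequence using the optimal profile $\eta$ solving the ODE \eqref{ODE*}: here is the point where Remark~\ref{Rem:Pot} matters, since for smooth nondegenerate potentials the profile lives in $(0,1)$ for all $t\in\R$ (Sternberg case), while for the double-obstacle-type potentials it reaches $0$ and $1$ in finite "time" (Blowey--Elliott case \eqref{PRF}); in the smooth case one interpolates/cuts off $\eta$ near the endpoints as in \cite[Proposition 3.11]{BloweyElliott} to force it into $[0,1]$ with compact transition, incurring only an $o(1)$ error in the energy. Placing this profile in the $\eps$-neighborhood of $\partial E_j$ (staying inside $\Omega$ since $E_j\ssubset\Omega$) produces $\varphi_\eps^j\in H^1_0(\Omega;[0,1])$ with $\varphi_\eps^j\to\chi_{E_j}$ in $L^1$ and $\EE^\eps(\varphi_\eps^j)\to c_0 P_\Omega(E_j)$. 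Crucially, because $\{0<\varphi_\eps^j<1\}$ is contained in an $O(\eps)$-neighborhood of $\partial E_j$, the hypotheses of Proposition~\ref{PRO:ContEV} hold, so $\lambda_1^{\eps,\varphi_\eps^j}\to\lambda_1^{0,\chi_{E_j}}$; I would additionally need continuity (or at least upper semicontinuity) of $\varphi\mapsto\lambda_1^{0,\varphi}$ along the strict-$BV$ approximation $E_j\to E$, which should follow from a Mosco-type convergence of the spaces $V^{\chi_{E_j}}\to V^\varphi$ — or, more robustly, one can choose the $E_j$ so that $E_j\subset E$ increasing, forcing $V^{\chi_{E_j}}\subseteq V^\varphi$ and monotone convergence of the Rayleigh quotients. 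A diagonal argument in $j=j(\eps)$ then yields the recovery sequence $\varphi_\eps := \varphi_\eps^{j(\eps)}$ with $F_\eps^\gamma(\varphi_\eps)\to F_0^\gamma(\varphi)$.

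The main obstacle I anticipate is the boundary term and the interplay between the cut-off construction and the eigenvalue continuity: one must simultaneously (i) approximate $E$ by compactly contained sets so that the boundary trace $\int_{\partial\Omega}\varphi_{\vert\partial\Omega}\dHH^{n-1}$ emerges correctly in the limit energy — rather than being "lost" to the ambient space as in a free $\R^n$ setting — and (ii) ensure this approximation does not destroy the convergence of the principal eigenvalue, which is delicate because shrinking $E_j$ inside $E$ can only increase $\lambda_1^{0,\cdot}$ and one needs this increase to vanish. Handling the non-smooth double-obstacle potential uniformly with the smooth case in the ODE/profile construction is a second, more technical, point: the profile's behavior \eqref{PRF} versus the full-line behavior requires the careful interpolation flagged in Remark~\ref{Rem:Pot}, and one must check the box constraint $\varphi_\eps\in[0,1]$ is respected throughout (which is exactly why no growth condition on $\psi$ is needed here). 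The $\liminf$-inequality's boundary contribution via zero-extension is comparatively routine once the Bourdin--Chambolle viewpoint is adopted.
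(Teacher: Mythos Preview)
Your overall strategy---separate the $\liminf$ and $\limsup$ inequalities, decouple the eigenvalue from the Ginzburg--Landau part, build the profile from the ODE~\eqref{ODE*}, and use the Bourdin--Chambolle cut-off---matches the paper's. Two steps, however, do not go through as you have written them.

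\textbf{The $\liminf$ inequality for the eigenvalue.} You invoke Proposition~\ref{PRO:ContEV} for an \emph{arbitrary} sequence $\varphi_\eps\to\varphi$ with bounded $F_\eps^\gamma$, claiming that the energy bound forces the rate $\|\varphi_\eps-\varphi\|_{L^1(E^\varphi\cap\{\varphi_\eps<1/2\})}=\mathcal{O}(\eps)$. It does not: the Ginzburg--Landau bound (as in Theorem~\ref{THM:DifInt}) controls only the measure of the \emph{transition layer} $\{\delta\le\varphi_\eps\le 1-\delta\}$, not of the ``wrong-phase'' set $E^\varphi\cap\{\varphi_\eps<\delta\}$, on which $\psi(\varphi_\eps)$ can be arbitrarily small. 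For instance, a standard Modica--Mortola recovery sequence for a ball of radius $r-\eps^{1/3}$ converges in $L^1$ to $\chi_{B_r}$ with bounded energy, yet the wrong-phase set has measure of order $\eps^{1/3}$. The paper therefore does \emph{not} use Proposition~\ref{PRO:ContEV} for the lower bound; instead it argues directly (following \cite[Theorem~4.10]{GHKK}): extract a weak $H^1_0$-limit $\bar v$ of the normalized first eigenfunctions $v_\eps$, apply Fatou's lemma to $\int_\Omega b^\eps(\varphi_\eps)\,v_\eps^2$, and use $b^0(0)=+\infty$ to force $\bar v\in V^\varphi$, whence $\lambda_1^{0,\varphi}\le\liminf_\eps\lambda_1^{\eps,\varphi_\eps}$ by weak lower semicontinuity of the Dirichlet integral.

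\textbf{The boundary term in the recovery sequence.} Your description is internally inconsistent: you ask for compactly contained $E_j$ that converge \emph{strictly} in $BV(\Omega)$ to $E$ (so $P_\Omega(E_j)\to P_\Omega(E)$) with vanishing traces, and then conclude $c_0\,P_\Omega(E_j)\to c_0\big(P_\Omega(E)+\int_{\partial\Omega}\varphi_{\vert\partial\Omega}\,\mathrm d\mathcal{H}^{n-1}\big)$. But trace continuity under strict convergence would then force $\varphi_{\vert\partial\Omega}=0$; if $\varphi$ has nonzero trace, no strict approximation by compactly contained sets exists. The mechanism the paper uses is a two-step approximation: first approximate $E$ strictly by smooth transversal sets $E_k$ (not compactly contained), then truncate $E_k^\delta:=E_k\cap\{\mathrm{dist}(\,\cdot\,,\partial\Omega)>\delta\}$. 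The truncation is \emph{not} strict; one shows $\limsup_{\delta\to 0}P_\Omega(E_k^\delta)\le P_\Omega(E_k)+\mathcal{H}^{n-1}(E_k\cap\partial\Omega)$, and it is exactly this extra boundary contribution that becomes $\int_{\partial\Omega}\varphi_{\vert\partial\Omega}\,\mathrm d\mathcal{H}^{n-1}$ in $F_0^\gamma$. Your handling of the eigenvalue along the inner approximation (via a Mosco-type argument) is correct and is precisely what the paper does.
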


The second step is to modify the recovery sequence obtained by Theorem~\ref{GammaOwen}, as in \cite[Theorem 4.8]{GHKK}, via suitable $C^1$-diffeomorphisms such that the modified sequence is actually a recovery sequence for $J^{\eps}_{\gamma}$ satisfying the volume constraint included in $\Phi_m$. This is done in the following theorem.

\begin{Thm}\label{MainGamma}
    For any $\eps,\gamma>0$, let the functions $J^{\eps}_{\gamma}, J^0_{\gamma}: L^1(\Omega)\to \mathbb{R}\cup \left\{+\infty\right\}$ be defined as
    \begin{align*}
        J^{\eps}_{\gamma}(\varphi)=
        \begin{cases}
            \lambda_1^{\eps,\varphi}+\gamma\int_{\Omega}\frac{\eps}{2}\abs{\nabla \varphi}^2+\frac{1}{\eps}\psi(\varphi)\dLL^n
            &\quad \textup{if }\varphi\in \Phi_m,\\
         +\infty &\quad\textup{else},   
        \end{cases}
    \end{align*}
    and
    \begin{align*}
        J^0_{\gamma}(\varphi)=
        \begin{cases}
        \lambda_1^{0,\varphi}+c_0\gamma\left(P_{\Omega}(E^{\varphi})+\int_{\partial\Omega}\varphi_{\vert\del\Omega}\dHH^{n-1}\right)
            &\quad \textup{if }\varphi\in \Phi_m^0,\\
         +\infty &\quad\textup{else}   .
        \end{cases}
    \end{align*}    
    Then $J^{\eps}_{\gamma}\overset{\Gamma}{\to}J^0_{\gamma}$.
\end{Thm}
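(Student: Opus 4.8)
The plan is to derive Theorem~\ref{MainGamma} from Theorem~\ref{GammaOwen} by modifying only the construction of the recovery sequence, while the liminf-inequality carries over essentially verbatim. For the liminf-inequality, suppose $\varphi_\eps \to \varphi$ in $L^1(\Omega)$ with $\liminf_{\eps\to 0} J^\eps_\gamma(\varphi_\eps) < +\infty$. Since $J^\eps_\gamma(\varphi_\eps) \geq F^\gamma_\eps(\varphi_\eps)$ whenever $\varphi_\eps \in \Phi_m$ (the functionals agree on $\Phi_m$, and $\Phi_m \subset H^1_0(\Omega;[0,1])$), Theorem~\ref{GammaOwen} already gives $\liminf_{\eps\to 0} F^\gamma_\eps(\varphi_\eps) \geq F^\gamma_0(\varphi)$. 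It remains to check that the limit $\varphi$ satisfies the volume constraint: since $\fint_\Omega \varphi_\eps \dLL^n = m$ for each $\eps$ and $\varphi_\eps \to \varphi$ in $L^1(\Omega)$, we get $\fint_\Omega \varphi \dLL^n = m$, hence $\varphi \in \Phi^0_m$ provided $\varphi \in BV(\Omega;\{0,1\})$ (which is forced by finiteness of the $\Gamma$-liminf), and there $F^\gamma_0 = J^0_\gamma$. So the liminf-inequality is immediate.

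The substantive part is the limsup-inequality: given $\varphi \in \Phi^0_m$, construct $\varphi_\eps \in \Phi_m$ with $\varphi_\eps \to \varphi$ in $L^1(\Omega)$ and $\limsup_{\eps\to 0} J^\eps_\gamma(\varphi_\eps) \leq J^0_\gamma(\varphi)$. I would start from the recovery sequence $\tilde\varphi_\eps \in H^1_0(\Omega;[0,1])$ furnished by Theorem~\ref{GammaOwen}, which satisfies $\tilde\varphi_\eps \to \varphi$ in $L^1(\Omega)$ and $\limsup_{\eps\to 0} F^\gamma_\eps(\tilde\varphi_\eps) \leq F^\gamma_0(\varphi)$. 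In general $\fint_\Omega \tilde\varphi_\eps \dLL^n = m_\eps \to m$ but $m_\eps \neq m$. Following \cite[Theorem 4.8]{GHKK}, I would correct the mass by precomposing with a sequence of $C^1$-diffeomorphisms $\Psi_\eps \colon \overline\Omega \to \overline\Omega$ with $\Psi_\eps \to \mathrm{id}$ in $C^1$, chosen (e.g.\ supported in a fixed annulus near $\partial\Omega$, or as radial dilations adjusted near the boundary so as to fix $\partial\Omega$ pointwise) so that $\varphi_\eps := \tilde\varphi_\eps \circ \Psi_\eps^{-1}$ has exactly $\fint_\Omega \varphi_\eps \dLL^n = m$; the amount of correction needed is $O(|m_\eps - m|) \to 0$, so $\|\Psi_\eps - \mathrm{id}\|_{C^1} \to 0$. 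Since $\Psi_\eps$ fixes $\partial\Omega$, $\varphi_\eps$ still lies in $H^1_0(\Omega;[0,1])$, hence $\varphi_\eps \in \Phi_m$. Then one checks: (i) $\varphi_\eps \to \varphi$ in $L^1(\Omega)$, because $\tilde\varphi_\eps \to \varphi$ and the diffeomorphisms converge to the identity with Jacobians converging to $1$; (ii) the Ginzburg--Landau energy is stable, $\int_\Omega \frac{\eps}{2}|\nabla\varphi_\eps|^2 + \frac1\eps\psi(\varphi_\eps)\dLL^n = \int_\Omega \frac{\eps}{2}|\nabla\tilde\varphi_\eps|^2\,|D\Psi_\eps^{-1}\circ\Psi_\eps|^{\text{(transport factors)}} + \ldots$, which by the change-of-variables formula differs from $\EE^\eps(\tilde\varphi_\eps)$ by a factor $1 + o(1)$ uniformly, using $\|\Psi_\eps - \mathrm{id}\|_{C^1} \to 0$; and (iii) the eigenvalue term converges, $\lambda_1^{\eps,\varphi_\eps} \to \lambda_1^{0,\varphi}$, which follows from Proposition~\ref{PRO:ContEV} once one verifies its hypotheses for the sequence $\varphi_\eps$ — in particular the $L^1$-convergence rate $\|\varphi_\eps - \varphi\|_{L^1(E^\varphi \cap \{\varphi_\eps < 1/2\})} = O(\eps)$, which is inherited from the corresponding rate for $\tilde\varphi_\eps$ (built into the cut-off recovery sequence of Theorem~\ref{GammaOwen}) together with the $C^1$-smallness of the diffeomorphism perturbation. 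Combining (ii) and (iii) gives $\limsup_{\eps\to 0} J^\eps_\gamma(\varphi_\eps) \leq \lambda_1^{0,\varphi} + c_0\gamma\bigl(P_\Omega(E^\varphi) + \int_{\partial\Omega}\varphi_{\vert\partial\Omega}\dHH^{n-1}\bigr) = J^0_\gamma(\varphi)$.

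The main obstacle I anticipate is verifying that the mass-correcting diffeomorphisms can be chosen so that they simultaneously (a) achieve the exact mass $m$ for every $\eps$, (b) fix $\partial\Omega$ pointwise so as to preserve the homogeneous Dirichlet condition encoded in $H^1_0(\Omega)$, and (c) converge to the identity in $C^1$ fast enough that they do not destroy the delicate $O(\eps)$ convergence rate needed to apply Proposition~\ref{PRO:ContEV}. The point is that a naive radial dilation does not fix $\partial\Omega$, so one must interpolate the dilation against the identity in a boundary collar; one then needs the collar width and the $C^1$-size of the correction to be controlled uniformly by $|m_\eps - m|$, and to argue that the induced perturbation of the ratio $\|\varphi_\eps - \tilde\varphi_\eps\|_{L^1}$ on the relevant region is itself $o(\eps)$, or at worst absorbed into the $O(\eps)$ bound. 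This is precisely the argument carried out in \cite[Theorem 4.8]{GHKK}, and the task here is to confirm that it goes through unchanged with the more general class of potentials $\psi$ satisfying \ref{ASS:A:1}--\ref{ASS:A:1b} and with the cut-off recovery sequence of Theorem~\ref{GammaOwen} in place of the one used there; since neither the diffeomorphism construction nor the change-of-variables estimates see the specific form of $\psi$, this should be routine, and the proof reduces to citing and lightly adapting that construction.
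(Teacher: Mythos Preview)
Your proposal is correct and follows essentially the same approach as the paper: the liminf-inequality is inherited from Theorem~\ref{GammaOwen} together with preservation of the volume constraint under $L^1$-convergence, and the limsup-inequality is obtained by mass-correcting the recovery sequence of Theorem~\ref{GammaOwen} via a one-parameter family of $C^1$-diffeomorphisms exactly as in \cite[Theorem~4.8]{GHKK}. The only cosmetic difference is that the paper (following \cite{GHKK}) takes the diffeomorphism of the form $T_s(x)=x+s\,\B{\xi}(x)$ with $\B{\xi}\in C_0^1(\Omega;\R^n)$ chosen so that $\int_\Omega \varphi\,\Div\B{\xi}\,\dLL^n>0$ and obtains $s(\eps)$ via the implicit function theorem --- so the perturbation is compactly supported in the \emph{interior} rather than near $\partial\Omega$ as you suggest --- but this is precisely the construction behind the reference you cite, and your verification steps (i)--(iii) match what is needed.
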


The proofs of Theorems~\ref{GammaOwen} and \ref{MainGamma} are presented in Section~\ref{SECT:PROOF}.

\begin{Rem}
    Although for our purposes we have fixed $\Omega=B_R(0)$ the results presented in Theorems~\ref{GammaOwen} and \ref{MainGamma} hold true for any bounded, open set $\Omega\subset\R^n$ with Lipschitz boundary. 
\end{Rem}




\section{Proofs}\label{SECT:PROOF}

We first assure that the basic properties of radially symmetric-decreasing rearrangements in $\R^n$ carry over to our local case.

\begin{proof}[Proof of Lemma \ref{LEM:SDR}]
    In view of Remark~\ref{REM:SDR}\ref{REM:SDR:B}, the statements \ref{item:f* measurable}--\ref{item:f* phi}, \ref{item:f* hardy littlewood} and \ref{item:f* nonexpansive} are direct consequences of the results in \cite[Sections~3.3--3.5]{lieb-loss}.
    
    To prove \ref{item:f* psi}, we use the decomposition $\Psi' = \Psi'_+ - \Psi'_-$, where $\Psi'_+ := \max(\Psi',0)$ and $\Psi'_- := -\min(\Psi',0)$ denote the positive part and the negative part of $\Psi'$, respectively.
    Now, we define
    \begin{align*}
        \Psi_1(t) := \int_0^t \Psi_+'(s) \,\mathrm ds
        \quad\text{and}\quad 
        \Psi_2(t) := \int_0^t \Psi_-'(s) \,\mathrm ds
        \quad\text{for all $t\in[0,1]$.}
    \end{align*}%
    Recalling $\Psi(0)=0$, we apply the fundamental theorem of calculus to derive the decomposition ${\Psi = \Psi_1 - \Psi_2}$.
    As the functions $\Psi_1$ and $\Psi_2$ are non-decreasing, \eqref{EQ:PSI} follows directly from \cite[Section~3.3(iv)]{lieb-loss}.
    
    To prove \ref{item:f* polya szego}, let $f\in H^1_0(\Omega;\R_0^+)$ be any function, and let $f_0:\R^n\to\R_0^+$ denote its trivial extension as in Remark~\ref{REM:SDR}\ref{REM:SDR:B}. This means that $f_0\in H^1(\R^n)$ is a non-negative function with compact support. 
    We further define
        \begin{align*}
        A:[0,\infty)\to [0,\infty), \,
        x \mapsto x^2.
    \end{align*}
    Hence $A\in C^2([0,\infty)$ is strictly increasing, $A(0)=0$ and $A^{\frac12}$ is convex.
    Thus, as all conditions are fulfilled, we can apply the first part of \cite[Theorem~1.1]{Brothers-Ziemer} and obtain
    \begin{align}\label{PSI:0}
        \int_{\R^n}\bigabs{\nabla f_0^{\ast}}^2\dLL^n\le 
        \int_{\R^n}\bigabs{\nabla f_0}^2\dLL^n,
    \end{align}%
    which directly implies \eqref{PSI} since $\Grad f_0 = 0$ and $\Grad f^*_0 = 0$ almost everywhere on $\R^n\backslash\Omega$. 
    We note that this classical Pólya--Szeg\H{o} inequality is well-known (see, e.g., \cite[Theorem 2.1.3]{Henrot}), but the strong version contained in \cite[Theorem~1.1]{Brothers-Ziemer} given below is more delicate.
    
    In addition, let us now assume that condition \eqref{COND:GRAD} holds true and that $f>0$ almost everywhere in $\Omega$. Since $f^*_0=0$ on $\R^n\backslash\Omega$, we have $\left\{f_0^{\ast}>0\right\}\subset \Omega$
    and thus,
    \begin{align*}
        \LL^n\Big(\{x\in\R^n \,\vert\, \Grad f_0^*(x) = 0\}
        \cap \big(f_0^*\big)^{-1}\big((0,\infty)\big)\Big)
        \le \LL^n\big(\{x\in\Omega \,\vert\, \Grad f^*(x) = 0\}\big)=0.
    \end{align*}%
    Therefore, \cite[Theorem~1.1]{Brothers-Ziemer} states that equality in \eqref{PSI:0} holds if and only if $f_0$ is a translate of $f_0^*$. This directly entails that equality in \eqref{PSI} holds if and only if $f$ is a translate of $f^*$. However, since $f\in H^1_0(\Omega)$ with $f>0$ almost everywhere in $\Omega=B_R(0)$, this is possible if and only if $f=f^*$ almost everywhere in $\Omega$, which proves the claim.
\end{proof}

Now we are in the position to present the proofs of our main results.

\begin{proof}[Proof of Lemma~\ref{LEM:EGL}]
In view of the definition of $\EE^\eps$ given in \eqref{DEF:EGL}, the assertion follows directly by using the Pólya--Szeg\H{o} inequality (Lemma~\ref{LEM:SDR}\ref{item:f* polya szego}) to estimate the gradient term, and by applying Lemma~\ref{LEM:SDR}\ref{item:f* psi} to the potential term.
\end{proof}

\begin{proof}[Proof of Theorem~\ref{THM:FKDif}]
    Let $\varphi\in H_0^1(\Omega;[0,1])$ be arbitrary.
    
    First of all, we derive some general inequalities.
    So, we let $w \in H^1_0(\Omega) \backslash\{0\}$ with $w\geq 0$ almost everywhere in $\Omega$ be arbitrary.
    In view of \ref{ASS:A:2}, the coefficient $b^\eps$ is continuous, decreasing and $b^{\eps}(0)=\beta^{\eps}$; therefore, we infer that the function 
    \begin{align}
        B^\eps:[0,1]\to [0,\beta^\eps], \quad
        s\mapsto b^\eps(0) - b^\eps(s)
    \end{align}
    is continuous and increasing. Hence, according to Lemma~\ref{LEM:SDR}\ref{item:f* phi}, we have
    \begin{align*}
        \big(B^\eps(\varphi)\big)^* = B^\eps(\varphi^*)
        \quad\text{and}\quad
        \big(w^2\big)^* = \big(w^* \big)^2
    \end{align*}
    almost everywhere in $\Omega$.
    Applying Lemma~\ref{LEM:SDR}\ref{item:f* level sets and p norms} and \ref{item:f* hardy littlewood}, we thus obtain
    \begin{align}
        \label{EST:B*}
        \intO b^\eps(\varphi) w^2 \dLL^n
        &= b^\eps(0) \intO w^2 \dLL^n
            - \intO B^\eps(\varphi) w^2 \dLL^n
        \notag\\
        &\ge b^\eps(0) \intO \big(w^* \big)^2 \dLL^n
            - \intO \big(B^\eps(\varphi)\big)^* \; \big(w^2\big)^* \dLL^n
        \notag\\
        &= b^\eps(0) \intO \big(w^* \big)^2 \dLL^n
            - \intO B^\eps(\varphi^*)\, \big(w^* \big)^2 \dLL^n
        \notag\\
        & = \intO b^\eps(\varphi^*) \, \big(w^* \big)^2 \dLL^n.
    \end{align}
    In particular, since $\varphi^{**} = \varphi^*$, this already entails that
    \begin{align}
        \label{EST:B**}
        \intO b^\eps(\varphi^*) w^2 \dLL^n
        \ge  \intO b^\eps(\varphi^*) \, \big(w^* \big)^2 \dLL^n.
    \end{align}
    
    These general estimates can now be used to prove the assertion $\lambda_1^{\eps,\varphi^*} \le \lambda_1^{\eps,\varphi} $.
    Indeed, we define the functional
    \begin{align}
        \label{FUNC:RAY}
        \mathcal R: H^1_0(\Omega) \backslash\{0\} \to \R,\quad
        w \mapsto 
        \frac{\intO \big(\abs{\Grad w}^2 
        + b^\eps(\varphi^*)\, w^2 \big) \dLL^n}{\norm{w}_{L^2(\Omega)}^2}\;.
    \end{align}
    
    We now consider the positive-normalized eigenfunction $w_1^{\eps,\varphi^*}$ associated with $\varphi^*$, which is obviously a minimizer of $\mathcal R$. Using \eqref{EST:B**} along with the Pólya--Szeg\H{o} inequality (Lemma~\ref{LEM:SDR}\ref{item:f* polya szego}) and Lemma~\ref{LEM:SDR}\ref{item:f* level sets and p norms}, we find that
    \begin{align*}
        \mathcal{R}\Big(\big(w_1^{\eps,\varphi^*}\big)^*\Big)
        \le
        \mathcal{R}\Big(w_1^{\eps,\varphi^*}\Big).
    \end{align*}
    Therefore, $\big(w_1^{\eps,\varphi^*}\big)^*$ is also a minimizer of $\mathcal{R}$ and thus, due to Proposition~\ref{PROP:EVP}\ref{PROP:EVP:C}, it is an eigenfunction to the eigenvalue $\lambda_1^{\eps,\varphi^*}$. As $\big(w_1^{\eps,\varphi^*}\big)^*$ is non-negative and $L^2$-normalized, this is enough to deduce
    \begin{align*}
     \big(w_1^{\eps,\varphi^*}\big)   =\big(w_1^{\eps,\varphi^*}\big)^*\quad\text{a.e. in }\Omega,
    \end{align*}
    as the eigenspace to $\lambda_1^{\eps,\varphi^*}$ is one-dimensional.
    On the other hand, the Courant--Fischer characterization (see \eqref{CHAR:CF}) yields that for any $w\in H_0^1(\Omega)\backslash \left\{0\right\}$ with $w\ge 0$ almost everywhere in~$\Omega$, we have
    \begin{align}
        \lambda_1^{\eps,\varphi^*}\le
        \frac{\intO \big(\abs{\Grad w^*}^2 
        + b^\eps(\varphi^*)\, (w^*)^2 \big) \dLL^n}{\norm{w^*}_{L^2(\Omega)}^2}\notag\\
        \le 
        \frac{\intO \big(\abs{\Grad w}^2 
        + b^\eps(\varphi)\, (w)^2 \big)\dLL^n}{\norm{w}_{L^2(\Omega)}^2}.\label{EST:STAREV}       
    \end{align}
    Here, we applied the Pólya--Szeg\H{o} inequality (Lemma~\ref{LEM:SDR}\ref{item:f* polya szego}), estimate \eqref{EST:B*} and Lemma~\ref{LEM:SDR}\ref{item:f* level sets and p norms}. Hence, choosing $w=w_1^{\eps,\varphi}$
    we use Proposition~\ref{PROP:EVP}\ref{PROP:EVP:C} to conclude that
    \begin{align}\label{EST:LAM*}
        \lambda_1^{\eps,\varphi^*}\le \lambda_1^{\eps,\varphi}
    \end{align}
    and thus, the proof is complete.
\end{proof}
\begin{proof}[Proof of Theorem~\ref{THM:MIN}]
Let $\varphi\in \Phi_m$ be any minimizer of the optimization problem \eqref{OP}.
    Combining the estimates \eqref{EST:E*} and \eqref{EST:LAM*}, we deduce that $J^\eps_\gamma(\varphi^*) \le J^\eps_\gamma(\varphi)$.
    Since $\varphi$ is a minimizer, this implies that
    \begin{align}
    \label{EQ:J}
        J^\eps_\gamma(\varphi^*)
        = J^\eps_\gamma(\varphi).
    \end{align}
    This proves that the symmetric-decreasing rearrangement $\varphi^*$ is also a minimizer of the optimization problem \eqref{OP}.
    
    Therefore, it remains to prove that the eigenfunction $w_1^{\eps,\varphi}$ and the minimizer $\varphi$ are symmetric-decreasing, meaning that $\varphi = \varphi^*$ and $w_1^{\eps,\varphi}=(w_1^{\eps,\varphi})^*$ almost everywhere in $\Omega$.
    
    First of all, using \eqref{EST:E*} and \eqref{EQ:J}, we obtain the estimate
    \begin{align*}
        \lambda_1^{\eps,\varphi}
        = J^\eps_\gamma(\varphi) - \gamma \EE^\eps(\varphi)
        \le J^\eps_\gamma(\varphi^*) - \gamma \EE^\eps(\varphi^*)
        = \lambda_1^{\eps,\varphi^*}.
    \end{align*}
    Hence, in combination with \eqref{EST:LAM*}, we conclude that
    \begin{align}\label{EQ:STAR}
        \lambda :=\lambda_1^{\eps,\varphi} = \lambda_1^{\eps,\varphi^*}.
    \end{align}
    As above let $w=w_1^{\eps,\varphi}$ be the positive-normalized eigenfunction corresponding to the principal eigenvalue $\lambda$ associated with the minimizer $\varphi$. Combining \eqref{EST:STAREV} with \eqref{EQ:STAR} we arrive at
   \begin{align}\label{EQ:EF}
   \lambda_1^{\eps,\varphi^*}=
        \frac{\intO \big( \abs{\Grad w^*}^2 
        + b^\eps(\varphi^*)\, (w^*)^2 \big)\dLL^n}{\norm{w^*}_{L^2(\Omega)}^2}
        =
        \frac{\intO \big(\abs{\Grad w}^2 
        + b^\eps(\varphi)\, (w)^2 \big)\dLL^n}{\norm{w}_{L^2(\Omega)}^2}
        =\lambda_1^{\eps,\varphi}.        
    \end{align}
    Since, according to Proposition~\ref{PROP:EVP}\ref{PROP:EVP:C}, the eigenspace associated to the eigenvalue  $\lambda_1^{\eps,\varphi^*}$ is one-dimensional, we conclude that
    \begin{align}\label{EV:COM}
        w^*=\big(w_1^{\eps,\varphi}\big)^*=w_1^{\eps,\varphi^*} \quad\text{a.e. in }\Omega.
    \end{align}
    Moreover, Proposition~\ref{PROP:EVP}\ref{PROP:EVP:C} further yields $w^*>0$ almost everywhere in~$\Omega$. 
    As $w^*$ is a symmetric-decreasing rearrangement, it follows from Lemma~\ref{LEM:SDR}\ref{item:f* measurable} that $w^*>0$ actually holds \textit{everywhere} in $\Omega$, which will be crucial in what follows.
    Since $\norm{w^*}_{L^2(\Omega)} = \norm{w}_{L^2(\Omega)} = 1$, \eqref{EQ:EF} entails that
    \begin{align}\label{EQ:BPhi}
        \intO \big(\abs{\Grad w}^2 + b^\eps(\varphi)w^2\big) \dLL^n
        = \intO \big(\abs{\Grad w^*}^2 + b^\eps(\varphi^*) (w^*)^2\big)\dLL^n.
    \end{align}
    Invoking \eqref{EST:B*}, we thus obtain
    \begin{align}
        \intO \abs{\Grad w}^2 \dLL^n 
        - \intO \abs{\Grad w^*}^2 \dLL^n
        =  \intO b^\eps(\varphi^*)(w^*)^2 \dLL^n
        - \intO b^\eps(\varphi)w^2 \dLL^n
        \le 0.
    \end{align}
    Hence, we have equality in the Pólya--Szeg\H{o} inequality (Lemma~\ref{LEM:SDR}\ref{item:f* polya szego}): 
    \begin{align}
    \label{EQ:GRADW}
        \intO \abs{\Grad w}^2 \dLL^n 
        = \intO \abs{\Grad w^*}^2 \dLL^n.
    \end{align}
    In order to prove $w=w^*$ almost everywhere in $\Omega$, we now intend to show that $w^*$ is even \textit{strictly} symmetric-decreasing.
    Therefore, we argue by contradiction and assume that this is not the case. This means that there exists a direction $x\in\R^n$ with $\abs{x}=1$ as well as $0\le s<t\le R$ such that $w^*(sx) = w^*(tx) =:c$. As the function $w^*$ is positive in $\Omega$, we deduce that $c>0$. Moreover, since $w^*$ is non-increasing in radial direction, we infer that $w^*(\tau x) = c$ for all $\tau\in[s,t]$. Because of spherical symmetry, this already implies
    \begin{align}
    \label{MAXP:1}
        w^* = c \quad\text{in}\; 
        \big\{x\in\Omega \,\big\vert\, s\le \abs{x} \le t \big\}.
    \end{align}
    We further know from Corollary~\ref{COR:REG} that $w^*\in H^2(\Omega)$ is a strong solution of the eigenvalue problem \eqref{EVP}. Hence, recalling \eqref{EQ:STAR}, we have
    \begin{align*}
        0 = \Lap w^*  = \big( b^\eps(\varphi^*) - \lambda \big) w^*  \quad\text{a.e.~in}\; 
        \big\{x\in\Omega \,\big\vert\, s< \abs{x} < t \big\}.
    \end{align*}
    As $w^*>0$ in $\Omega$ and since $b^\eps(\varphi^*) - \lambda$ is non-decreasing and defined \textit{everywhere} in $\Omega$, we infer that
    \begin{align*}
        b^\eps(\varphi^*) - \lambda &= 0 
        \quad\text{in}\; 
        \big\{x\in\Omega \,\big\vert\, s< \abs{x} < t \big\},
        \\
        b^\eps(\varphi^*) - \lambda &\ge 0
        \quad\text{in}\;\; 
        A\coloneqq\big\{x\in\Omega \,\big\vert\, s< \abs{x} < R \big\},
    \end{align*}
    which in turn implies
    \begin{align}
    \label{MAXP:2}
        \Lap w^*
        = \big( b^\eps(\varphi^*) - \lambda\big) w^* \ge 0
        \quad\text{a.e.~in}\; 
        A.
    \end{align}
    Due to \eqref{MAXP:1}, we have
    \begin{align*}
        \underset{A}{\sup}\,w^*=c,
    \end{align*}
    since $w^*$ is symmetric-decreasing.
    Applying the strong maximum principle for the Laplace operator (see, e.g., \cite[Theorem~8.19]{Gilbarg-Trudinger} with $L=\Lap$), we infer that
    \begin{align*}
        w^* = c \quad\text{in}\; A.
    \end{align*}
    However, since $c>0$, this is a contradiction to the zero-trace condition hidden in $w^*\in H^1_0(\Omega)$. We have thus proven that $w^*$ is \textit{strictly} symmetric-decreasing. 
    
    As a consequence of this strict monotonicity, we have $\Grad w^* \neq 0$ almost everywhere in $\Omega$, meaning that 
    \begin{align*}
        \LL^n\big(\big\{x\in\Omega \,\big\vert\, \Grad w^* = 0 \big\}\big) = 0.
    \end{align*}
    Recalling that $w^* >0$ in $\Omega$, we use Lemma~\ref{LEM:SDR}\ref{item:f* polya szego} along with \eqref{EQ:GRADW} to conclude  
    \begin{align*}
       w=w^*\quad\text{a.e. in }\Omega,
    \end{align*}
    meaning that $w$ is symmetric-decreasing.
    Plugging this into \eqref{EQ:BPhi} we arrive at
    \begin{align*}
       \intO (b^{\eps}(\varphi^*)-b^{\eps}(\varphi))\big(w^*\big)^2\dLL^n=0. 
    \end{align*}
    Since $w^*>0$ in $\Omega$, we infer
    \begin{align*}
        b^\eps(\varphi^*)=b^\eps(\varphi) \quad\text{a.e. in }\Omega.
    \end{align*}
    This directly implies $\varphi=\varphi^*$ almost everywhere in $\Omega$ as $b^{\eps}$ is strictly decreasing (and thus injective). This proves that $\varphi$ is symmetric-decreasing and hence, the proof is complete.
\end{proof}

\begin{proof}[Proof of Theorem~\ref{GammaOwen}]
The $\liminf$ inequality of the Ginzburg--Landau part directly carries over from \cite[Lemma 1]{Owen} as in our case, we only need to consider trivial extensions of $\varphi_{\eps},\varphi$ instead of the more complicated boundary value function $h^{\eps}$ discussed there. 
We point out that \ref{ASS:A:1} is sufficient for the proof to work, as only the continuity in $[0,1]$ and the non-negativity of the potential $\psi$ is needed to ensure that the function $\Psi$ defined in \eqref{DEF:Root} is well-defined and differentiable. Note that we actually need to include the factor $\sqrt{2}$ in the definition of $\Psi$ in order for the Modica--Mortola trick
\begin{align*}
    \int_{\Omega}\bigabs{\nabla\Psi(\varphi_{\eps})}\dLL^n=
    \int_{\Omega}\sqrt{2\psi(\varphi_{\eps})}\bigabs{\nabla \varphi_{\eps}}\dLL^n\le 
    \int_{\Omega}\Big(\frac{\eps}{2}\bigabs{\nabla \varphi_{\eps}}^2+\frac{1}{\eps}\psi(\varphi_{\eps})\Big)\dLL^n=
    E^\eps(\varphi_{\eps})
\end{align*}
in the proof of the $\liminf$ inequality to work (see, e.g., \cite[Formula (3.61)]{BloweyElliott} also).
In \cite{Owen}, the factor $2$ is used which is due to the fact that there the gradient term in the energy is not scaled by $\frac{1}{2}$.

To verify the $\liminf$ inequality for the eigenvalue term, we proceed as in \cite[Theorem 4.10]{GHKK}. However, we need to be careful with the constraints for the limit cost functional. In \cite{GHKK}, the additional constraint $\varphi\in\mathcal{U}$ was imposed which fixes a non-trivial open set $S_1$ such that $S_1\subset \left\{\varphi=1\right\}$. This guarantees that all the eigenvalues are finite. In our framework, we now additionally need to consider the case $\lambda_1^{\varphi}=+\infty$. Therefore we consider $\varphi_{\eps}\to \varphi $ in $L^1(\Omega)$ such that
\begin{align*}
    \liminfse F_{\eps}^{\gamma}(\varphi_{\eps})<+\infty.
\end{align*}
Applying  Fatou's lemma to the potential term as in \cite[Proposition 1]{Modica} (which only requires the continuity of $\psi$ demanded in \ref{ASS:A:1}), we obtain that $\varphi\in BV(\Omega;\left\{0,1\right\})$ and, up to subsequence extraction, that the sequence of eigenvalues $(\lambda^{\eps,\varphi_\eps})_{\eps>0}$ is bounded.
Hence, as in the proof of \cite[Theorem 4.10]{GHKK}, the sequence of minimizers $(v_{\eps})_{\eps>0}$ of
the problem
\begin{align*}
        \min
        \left\{
            \left.
                    \int_{\Omega}
                        \abs{\nabla v}^2
                    \dLL^n
                  +\int_{\Omega}
                        b_{\varepsilon}(\ie{\varphi})\abs{v}^2
                     \dLL^n
          \right|
            \begin{aligned}
                &v\in H^1_0(\Omega), \\
                &\norm{v}_{L^2(\Omega)}=1
            \end{aligned}
        \right\} 
\end{align*}
fulfills
\begin{alignat*}{3}
    \begin{aligned}
        \ie{v}&\rightharpoonup \overline{v}\quad\text{in }H^1_0(\Omega),
        \qquad
        \ie{v}\to \overline{v}\quad\text{in }L^2(\Omega),
        \qquad
        \ie{v}\to \overline{v}\quad\text{a.e. in }\Omega,
    \end{aligned}
\end{alignat*}
after another subsequence extraction.
Now due to the boundedness of the sequence $\int_{\Omega}b_{\varepsilon}(\ie{\varphi})\abs{\ie{v}}^2 \dLL^n$ we proceed as in \cite{GHKK} and use Fatou's lemma to infer $\overline{v}\in V^{\varphi}$. Consequently, $V^{\varphi}$ is non-trivial and $\lambda_1^{0,\varphi}<\infty$. This means that the case $\lambda_1^{0,\varphi}=+\infty$ can only occur if also 
\begin{align*}
    \liminfse F_{\eps}^{\gamma}(\varphi_{\eps})=+\infty.
\end{align*} 
Therefore, the $\liminf$ inequality is established as in \cite[Theorem 4.10]{GHKK}.

It remains to prove the $\limsup$ inequality.
First of all, as already mentioned in Remark~\ref{Rem:Pot}, we want to show that the proof given in \cite[Theorem 1]{Sternberg} for the smooth double-well potential carries over to general potentials satisfying assumptions \ref{ASS:A:1} and \ref{ASS:A:1b}.
The key step is to consider the ordinary differential equation
\begin{alignat}{2}\label{ODE}
\left\{
\begin{aligned}
    \eta^{\prime}(t)&=\sqrt{2\psi(\eta(t))},\\
    \eta(0)&=\tfrac12.
\end{aligned}   
\right.
\end{alignat}
As the right hand side is locally Lipschitz away from $\eta(t)=0$ and $\eta(t)=1$, the Picard--Lindelöf theorem provides the existence of a unique maximal solution $\eta$ on an open interval $(t_0,t_1)$ with suitable $t_0 \in \R \cup\left\{-\infty\right\}$ and $t_1 \in \R \cup\left\{+\infty\right\}$, satisfying 
\begin{align}\label{limTmax}
    \underset{t\to t_1}{\lim}\eta(t)=1 
    \quad\text{and}\quad
    \underset{t\to t_0}{\lim}\eta(t)=0.
\end{align}
Moreover, since the right-hand side is non-negative, we know that $\eta$ is non-decreasing.
Now, depending on the choice of the potential $\psi$, the values $0$ and/or $1$ can either be reached in finite time, (i.e., $t_0$ or $t_1$ are finite), or the solution tends to those values asymptotically, (i.e., $t_0$ or $t_1$) are non-finite.
If the solution $\eta$ satisfies $\eta(t_0) = 0$ for some finite $t_0<0$ (or $\eta(t_1)=1$ for some finite $t_1>0$), it holds that $\eta(t)=0$ for all $t\le t_0$ (or $\eta(t)=1$ for all $t\ge t_1$). In particular, in any case, the solution exists for all $t\in\R$.
These properties follow from classical ODE theory, exploiting that, due to \ref{ASS:A:1}, the right-hand side of \eqref{ODE} is strictly positive whenever $\eta(t)\in (0,1)$.

As in \cite[(1.22)]{Sternberg}, we construct the profile function $\rho_{\eps}^P:\R\to [0,1]$ by defining
\begin{alignat}{2}\label{Profile}
    \rho_{\eps}^P(t)\coloneqq
    \begin{cases}
        1
        &\quad\text{for } t >2\sqrt{\eps},\\
       1+\left(1-\eta\left(\eps^{-\frac12}\right)\right)\left(\frac{t-2\sqrt{\eps}}{\sqrt{\eps}}\right),
        &\quad\text{for }\sqrt{\eps} \le t\le 2\sqrt{\eps},\\
        \eta\left(\frac{t}{\eps}\right),
        &\quad\text{for } \abs{t} \le \sqrt{\eps},\\
        \eta\left(-\eps^{-\frac12}\right)\left(\frac{t+2\sqrt{\eps}}{\sqrt{\eps}}\right),
        &\quad\text{for }-2\sqrt{\eps} \le t\le -\sqrt{\eps},\\    
        0,
        &\quad\text{for }t < -2\sqrt{\eps}.
    \end{cases}
\end{alignat}
The idea behind these profiles is to use the solution of \eqref{ODE} and possibly linearly interpolate the values where $\eta$ is close to $0$ or $1$. This interpolation is necessary to obtain a transition from $0$ to $1$ on a \textit{finite} interval scaling suitably with $\eps$, even though the solution of \eqref{ODE} does possibly not reach these values in finite time.

In case $\eta$ actually reaches the values $0$ and/or $1$, the interpolations in the second and fourth line of this definition become trivial and therefore negligible, provided that $\eps>0$ is sufficiently small.

In case the values $0$ and/or $1$ are only reached asymptotically, we still need the following exponential convergence rates in order for the proof in the spirit of \cite{Sternberg} to work out: 
\begin{itemize}
    \item If the value $1$ is only reached asymptotically, there exists $T_1>0$ as well as constants $C_1,a_1>0$ such that
    \begin{align}
    \label{EST:C1}
        \abs{1-\eta(t)}\le C_1 \exp(-a_1 t)\quad \text{for all $t\ge T_1$.}
    \end{align}
    \item If the value $0$ is only reached asymptotically, there exists $T_0>0$ as well as constants $C_0,a_0>0$ such that
    \begin{align}
    \label{EST:C0}
        \abs{\eta(t)}\le C_0 \exp(-a_0 t)\quad \text{for all $t\le -T_0$.}
    \end{align}
\end{itemize}
We will only prove estimate \eqref{EST:C1} since estimate \eqref{EST:C0} can be established completely analogously.
Therefore, we assume that $\eta$ reaches the value $1$ only asymptotically.
Due to the monotonicity of $\eta$, we have $\eta(t) \in \big[\tfrac 12,1 \big)$ and thus $\psi\big(\eta(t)\big)>0$ for all $t\in[0,\infty)$. Hence, $\eta$ is twice continuously differentiable with
\begin{align*}
    \eta^{\prime\prime}(t)=
    \frac{1}{2\sqrt{2\psi(\eta(t))}}2\psi^{\prime}(\eta(t))\eta^{\prime}(t)=
    \psi^{\prime}(\eta(t)) \quad\text{for all }t\in[0,\infty).
\end{align*}
Combining \eqref{limTmax} and \eqref{ODE}, we further deduce
\begin{align*}
    \underset{t\to \infty}{\lim} \eta^{\prime}(t)=0.
\end{align*}
Hence, for any $\xi\in [0,\infty)$, we have
\begin{align}\label{DifEq}
    \int_{\xi}^{\infty}\psi^{\prime}(\eta(t)) \text{\,d} t
    =-\eta^{\prime}(\xi).
\end{align}
Let us now assume that $\psi^{\prime}(1)\neq 0$. 
Since $\psi\colon [0,1]\to\R$ possesses a local minimum at $1$, this already entails $\psi^{\prime}(1)< 0$.
Hence, due to the continuity of $\psi'$, we have 
\begin{align*}
    \psi^{\prime}(s)<\frac{1}{2}\psi'(1)<0
\end{align*}
for all $s\in (0,1)$ in a suitably small neighborhood around $1$.
However, this is an obvious contradiction to the finiteness of the integral in \eqref{DifEq}. 
We thus conclude
\begin{align}\label{Psi10}
    \psi^{\prime}(1)=0.
\end{align}
This equality will now be the crucial ingredient in applying the comparison principle for ODEs.
Recalling that $\psi\in C^2([0,1])$, for any $s\in (0,1]$, we consider the Taylor expansion
\begin{align}\label{TayPsi}
    \psi(s)=\psi(1)+\psi^{\prime}(1)(s-1)
    +\frac{1}{2}\psi^{\prime\prime}(\xi_s)(s-1)^2
    =\frac{1}{2}\psi^{\prime\prime}(\xi_s)(s-1)^2,
\end{align}
for a $\xi_s$ between $1$ and $s$. 
In the light of \eqref{Psi10} and \ref{ASS:A:1b}, there exist $c,\delta>0$ such that
\begin{align*}
   \abs{\psi^{\prime\prime}(x)}\ge c >0, \quad\text{for all }x\in (1-\delta, 1+\delta)\cap[0,1].
\end{align*}
Hence, defining $a_1\coloneqq \sqrt{c}$, we obtain
\begin{align}\label{RHSEstimate}
    \sqrt{2\psi(s)}\ge a_1(1-s)
\end{align}
for all $s\in  (1-\delta, 1+\delta)\cap[0,1].$
Due to \eqref{limTmax}, there exists $t_1\in \R$ such that 
\begin{align}\label{initEta}
    s_1\coloneqq \eta(t_1)\in (1-\delta, 1+\delta)\cap(0,1)
\end{align}
We now consider the initial value problem
\begin{alignat}{2}\label{ODE2}
\left\{
\begin{aligned}
    \mu^{\prime}(t)&=a_1(1-\mu(t)),\\
    \mu(t_1)&=s_1,
\end{aligned}
\right.
\end{alignat}
which possesses the unique global solution
\begin{align*}
    \mu:\R \to \R, \quad \mu(t)=(s_1-1)\exp(-a_1(t-t_1))+1.
\end{align*} 
On the other hand, combining \eqref{ODE} and \eqref{initEta} with \eqref{RHSEstimate} and recalling that $\eta$ is non-decreasing  we infer that $\eta: \R \to \R$ satisfies
\begin{alignat}{2}\label{ODE3}
\left\{
\begin{aligned}
    \eta^{\prime}(t)&\ge a_1(1-\eta(t))\quad \text{for } t\ge t_1,\\
    \eta(t_1)&=s_1.
\end{aligned}   
\right.
\end{alignat}
Equations \eqref{ODE2} and \eqref{ODE3} directly imply
\begin{alignat*}{2}
\left\{
\begin{aligned}
    (\mu-\eta)^\prime(t)&\le - a_1(\mu-\eta)(t)\quad \text{for } t\ge t_1,\\
    (\mu-\eta)(t_1)&=0
\end{aligned}   
\right.
\end{alignat*}
and thus
\begin{align*}
    \eta(t)\ge \mu(t) \quad\text{for all } t\ge t_1,
\end{align*}
 as a direct consequence of Gronwall's lemma.

Hence, we conclude
\begin{align}\label{expDecay}
    \abs{\eta(t)-1}=1-\eta(t)\le (1-s_1)\exp(-a_1(t-t_1)) \quad\text{for all } t\ge t_1,
\end{align}
which proves \eqref{EST:C1} with $\bar C_1:=(1-s_1) \exp(a_1 t_1) >0$ and $T_1:=t_1$. 

Now, the estimates \eqref{EST:C1} and \eqref{EST:C0} allow us to continue as in \cite{Sternberg}. 
Therefore, we will need to approximate any general finite perimeter set by a suitable sequence of smooth sets. The reason for this approximation is that we know from the proof of \cite[Theorem 1]{Sternberg} that for any smooth, bounded, open set $E\subset \R^d$ having finite perimeter and satisfying the transversality condition
\begin{align*}
    \HH^{n-1}(\partial E\cap \partial\Omega)=0,
\end{align*}
there exists a recovery sequence $(\varphi_{\eps})_{\eps>0}\subset H^1(\Omega;[0,1])$ which satisfies
\begin{align}\label{LimsupModica}
\left\{
\begin{aligned}
 \limsupse 
 \int_{\Omega}
    \Big(
    \frac{\eps}{2}\abs{\nabla{\varphi}_{\eps}}^2+\frac{1}{\eps}\psi({\varphi}_{\eps})
    \Big)
 \dLL^n
 &\le c_0\text{Per}_{\Omega}(E),\\
 \norm{\varphi_{\eps}-\chi_E}_{L^1(\Omega)}&=\mathcal{O}(\eps).
\end{aligned} 
\right.
\end{align}
We point out that the above $L^1$-convergence rate can be obtained by following the line of argument in Step 1 of \cite[Theorem 1]{Sternberg}. The key fact is that, due to \eqref{EST:C1} and \eqref{EST:C0}, the profile $\rho_{\eps}^P$ converges in the interpolation parts exponentially to $0$ and $1$, respectively. In the middle part, $\rho_{\eps}^P$ is scaled with $\eps$ such that, using the coarea formula and a change of variables, we obtain the desired rate.

Obviously, this recovery sequence is not yet admissible as (in general) it does not fulfill the homogeneous Dirichlet boundary condition hidden in $H_0^1(\Omega)$. Following the idea in \cite[Theorem 3.1]{BourdinChambolle}, we make the following observation: 
If the finite perimeter set $E$ is \textit{compactly} contained in $\Omega$, then $\left(\varphi_{\eps}\right)_{\eps>0}\subset H_0^1(\Omega)$ is guaranteed provided that $\eps>0$ is sufficiently small. This directly follows from the construction of $\varphi_{\eps}$ via the optimal profile $\rho_\eps^P$ which vanishes in all points $t<-2\sqrt{\eps}$. This means that outside of a small tubular neighborhood around the boundary of $E$ we indeed have $\varphi_{\eps}=0$.
Therefore, we now approximate any finite perimeter set $E\subset\Omega$ by smooth, open finite perimeter sets which are compactly contained in $\Omega$. Although the line of argument is outlined in the proof of \cite[Theorem 3.1]{BourdinChambolle}, we highlight the key steps in order to present a comprehensive proof.

Let now $\varphi\in BV(\Omega;\left\{0,1\right\})$ be arbitrary and $E:=\{\varphi=1\}$.
In what follows,
we use the notation 
\begin{align*}
    E^{\Omega}:=E\cap\Omega, \quad
    \lambda_1^0(E)\coloneqq \lambda_1^{0,\chi_{E^{\Omega}}}\quad\text{and}\quad
    F_{0}^{\gamma}(E)\coloneqq F_{0}^{\gamma}(\chi_{E^{\Omega}}).
\end{align*}

As mentioned above, in order to construct a recovery sequence in $H_0^1(\Omega;[0,1])$,
we now approximate the finite perimeter set $E$ by a sequence $(E_k)_{k\in\N}$ of bounded, smooth, open sets $E_k\subset \R^n$ fulfilling
\begin{align}
\left\{
\begin{aligned}\label{PropEk}
    \HH^{n-1}(\partial E_k\cap \partial \Omega)&=0,\\
    \text{Per}_{\Omega}(E_k)&\to \text{Per}_{\Omega}(E)\quad \text{for }k\to\infty,\\
    \chi_{E_k^\Omega}&\to \chi_{E} \quad\text{in }  L^1(\Omega)\text{ for }k\to\infty,\\
	\underset{k\to\infty}{\limsup}\,\lambda_1^0(E_k)&\le\lambda_1^0(E).      
\end{aligned} 
\right.
\end{align}
Such a sequence is constructed in the proof of \cite[Theorem 4.10]{GHKK} relying on the ideas of \cite{BogoselOudet, Rindler, Modica}.
Note that the second and the third property of \eqref{PropEk} mean that
\begin{align}\label{chiStrict}
    \chi_{E_k^\Omega}\to\chi_{E} \text{ \textit{strictly} in } BV(\Omega)
\end{align}
as $k\to\infty$, see Subsection~\ref{Sub:BV}.
Due to the continuity of the trace operator with respect to strict $BV$-convergence, we further know
\begin{align*}
    \bignorm{\chi_{E_k^\Omega}}_{L^1(\partial\Omega)}\to \norm{\chi_{E}}_{L^1(\partial\Omega)}
    =\int_{\partial\Omega}\varphi_{\vert\del\Omega} \dHH^{n-1},
\end{align*}
for $k\to\infty$.

Now, the crucial idea of \cite[Theorem 3.1]{BourdinChambolle} is to perform a further approximation by cutting 
off $E_k$ in a tubular neighborhood around the boundary of $\Omega$ such that the truncated set is compactly contained in $\Omega$.
As we fix $k\in\N$ in what follows, we omit this index for a cleaner presentation.\\
For any $\delta>0$, the truncated set is defined as $E^{\delta}\coloneqq E\cap B_{\delta}$
with $$B_\delta\coloneqq \left\{x\in \Omega \suchthat \mathrm{dist}(x,\partial\Omega)>\delta\right\}.$$ Obviously, $E^{\delta}$ is compactly contained in $\Omega$ and it also is a set of finite perimeter.
We now intend to show that
\begin{align}\label{supSharpChambolle}
\left\{
\begin{aligned}
    &\;\underset{\delta\to 0}{\limsup}\;F_{0}^{\gamma}(E^{\delta})\le F_{0}^{\gamma}(E),\\
    &\;\underset{\delta\to 0}{\lim}\;\bignorm{\chi_{E^{\delta}}-\chi_{E}}_{L^1(\Omega)}=0.
\end{aligned} 
\right.
\end{align}
Then, applying a diagonal sequence argument will yield the desired $\limsup$ inequality, see below.

The $L^1$-convergence in \eqref{supSharpChambolle} is clear by construction.
To establish the first line of \eqref{supSharpChambolle}, we consider the eigenvalue term and the perimeter term separately.

For the eigenvalue term, we need to rely on the concept of $\gamma$-convergence (see \cite[Definition~3.3.1]{Bucur}), which was also a crucial tool in \cite[Theorem~4.10]{GHKK}. 
First of all, by using the characterization of $\gamma$-convergence via Mosco convergence (see \cite[Proposition~4.5.3]{Bucur}), we can show
\begin{align*}
    B_\delta\overset{\gamma}{\to}\Omega,
\end{align*}
for $\delta\to 0$.
Concerning the Mosco 1 condition, we note that for any $\phi\in H_0^1(\Omega)$ we find a sequence $(\phi_\delta)_{\delta>0}\subset C_0^\infty(\Omega)$  with
\begin{align*}
    \phi_\delta\to\phi\quad\text{in }H_0^1(\Omega).
\end{align*}
Due to the construction of $B_\delta$ and the fact that each $\phi_\delta$ has compact support in $\Omega$, we know $\phi_\delta\in H_0^1(B_\delta)$ (after possibly relabeling the index $\delta$).
The Mosco 2 condition is clear as $B_\delta\subset\Omega$.
Hence, from the fact that $\gamma$-convergence is stable with respect to intersection (cf. \cite[Proposition~4.5.6]{Bucur}), we infer the convergence
\begin{align}\label{EGamCon}
    E^\delta=E\cap B_\delta\overset{\gamma}{\to}E\cap \Omega=E^\Omega.
\end{align}
Note that at this stage we have to be careful not to confuse the notion of eigenvalues, as continuity with respect to $\gamma$-convergence is only formulated for the notion of eigenvalues defined on the classical Sobolev space, that is,
\begin{align*}
    \overline{\lambda}_1(\omega):=\min \left\{\left.
    \frac{\int_{\Omega}\abs{\nabla v}^2\dLL^n}{\int_{\Omega}\abs{v}^2\dLL^n}\right\vert
    v\in H_0^1(\omega)\backslash\{0\}
    \right\},
\end{align*}
with $\omega\subset \Omega$ quasi-open. For the $\gamma$-continuity
of this ``classical'' eigenvalue we refer to \cite[Corollary~6.1.8, Remark~6.1.10]{Bucur}.
Recalling the characterization in \eqref{MINL1}, our notion of the limit eigenvalue is defined as
\begin{align*}
    \lambda_1^0(\tilde{E})=\min \left\{\left.
    \frac{\int_{\Omega}\abs{\nabla v}^2\dLL^n}{\int_{\Omega}\abs{v}^2\dLL^n}\right\vert
    v\in \tilde{H}_0^1(\tilde{E})\backslash\{0\}
    \right\},
\end{align*}
for any finite perimeter set $\tilde{E}\subset \Omega$, since by construction, $V^{\chi_{\tilde{E}}}=\tilde{H}_0^1(\tilde{E})$ (see also Remark~\ref{Rem:V}\ref{CapvsLebes}). Nevertheless, as $E =E_k \subset \R^d$ and $B^\delta\subset \Omega$ are smooth open sets, we infer from the theory recalled in Remark~\ref{Rem:V} that 
\begin{align*}
    \tilde{H}_0^1(E\cap B_\delta)=H_0^1(E\cap B_\delta).
\end{align*}
Thus, in our case, $\lambda_1^0(E^\delta)=\lambda_1(E^\delta)$. In the light of \eqref{EGamCon}, we use the $\gamma$-continuity of eigenvalues (cf. \cite[Corollary 6.1.8, Remark 6.1.10]{Bucur}) to obtain
\begin{align}
    \label{LS:EIG}
    \underset{\delta\searrow 0}{\lim}\; \lambda_1^0(E^\delta)=\lambda_1^0(E).
\end{align}
For the perimeter term, we obtain
\begin{align}
    \label{EST:PER:1}
    P_{\Omega}(E^\delta)=P_{\Omega}(E\cap B_\delta)\le \HH^{n-1}(\partial(E\cap B_\delta))
\end{align}
due to \cite[Proposition 3.62]{AFP}. Applying \cite[Proposition 2.95]{AFP}, we deduce that
\begin{align}\label{boundMaes}
    \HH^{n-1}(\partial E\cap \partial B_{\delta})=\HH^{n-1}\big(\partial E\cap \big\{\mathrm{dist}(\cdot,\partial\Omega)=\delta\big\}\big)=0.
\end{align}
for almost every $\delta>0$.
Using the simple fact
\begin{align*}
    \partial(E\cap B_{\delta})\subset (\partial E\cap \overline{B_{\delta}})\cup (\partial B_{\delta}\cap \overline{E}),
\end{align*}
and exploiting \eqref{boundMaes},
we arrive at
\begin{align}
    \label{EST:H}
    \HH^{n-1}(\partial(E\cap B_\delta))
    &\le \HH^{n-1}(\partial E \cap B_{\delta})+\HH^{n-1}(E \cap \partial B_{\delta})
    \notag\\
    &= P_{B_{\delta}}(E)+\HH^{n-1}\big(E \cap \big\{\mathrm{dist}(\cdot,\partial\Omega)=\delta\big\}\big).
\end{align}
Here, for the equality in the second line, the smoothness of $E$ is crucial.
The first summand in the second line of \eqref{EST:H} side can be expressed as
$
   P_{B_{\delta}}(E)=\abs{D\chi_{E}}(B_{\delta}),
$
where $\abs{D\chi_{E}}$ denotes the total variation of the Radon-measure $D\chi_E$ associated with $\chi_E\in BV(\Omega)$ (see, e.g., \cite[Chapter~12]{Maggi}). From the $\sigma$-additivity of $\abs{D\chi_{E}}$, we directly infer
\begin{align}
    \label{EST:PER:2}
    \underset{\delta\to 0}{\lim}\;P_{B_{\delta}}(E)
    = \underset{\delta\to 0}{\lim}\;\abs{D\chi_{E}}(B_{\delta})
    = \abs{D\chi_{E}}(\Omega)
    = P_{\Omega}(E).
\end{align}
For the second summand in the second line of \eqref{EST:H}, we use the transversality condition $\HH^{n-1}(\partial E \cap \partial\Omega)=0$ in order to apply \cite[Lemma~13.9]{Rindler}. This yields
\begin{align}
    \label{EST:PER:3}
    \underset{\delta\to 0}{\lim}\;\HH^{n-1}\big(E \cap \big\{\mathrm{dist}(\cdot,\partial\Omega)=\delta\big\}\big)
    =\HH^{n-1}(E \cap \partial\Omega)
    =\int_{\partial\Omega}\varphi_{\vert\partial\Omega}\dHH^{n-1}.
\end{align}
Noticing that the trace of $\varphi_\delta$ vanishes on $\partial\Omega$ in the sense of Section~\ref{Sub:BV}, we combine \eqref{EST:PER:1}--\eqref{EST:PER:3} to obtain
\begin{align}
    \label{LS:PER}
    \underset{\delta\to 0}{\lim\sup}\; P_\Omega(E^\delta) \le P_\Omega(E).
\end{align}
Combining \eqref{LS:EIG} and \eqref{LS:PER}, we eventually conclude with \eqref{supSharpChambolle}.

Now, we close the proof by means of a final diagonal sequence argument. Therefore, we reinstate the index $k$.
Note that, without loss of generality, we can assume $E_k^{\delta}$, which is compactly contained in $\Omega$, is smooth by performing again the approximation of \eqref{PropEk}. 
We point out that by performing this approximation of $E_k^{\delta}$, the approximation sets are still compactly contained in $\Omega$. This is because the corresponding proof in \cite[Theorem 4.10]{GHKK} is based on classical convolution with mollifiers and thus, the set $E_k^{\delta}$ is only modified up to a small tubular neighborhood.

As we now take for granted that the sets $E_k^\delta$ are smooth, there exists a recovery sequence $\varphi_{\eps}^{k,\delta}\subset H_0^1(\Omega;[0,1])$ fulfilling \eqref{LimsupModica} with $E$ replaced by $E_k^{\delta}$. Using the convergence rate and the upper semicontinuity of eigenvalues provided by Corollary~\ref{UpperContEV}, we further know for any for fixed $k\in \N$ and $\delta>0$,
\begin{align*}
    \limsupse\lambda_1^{\eps,\varphi_{\eps}^{k,\delta}}\le \lambda_1(E_k^{\delta})
\end{align*}
and consequently,
\begin{align*}
    \limsupse F_{\eps}^{\gamma}(\varphi_\eps^{k,\delta})\le F_{0}^{\gamma}(E_k^{\delta}).
\end{align*}
Now, according to \eqref{PropEk} and \eqref{supSharpChambolle}, for every $k\in \N$ we can find a sufficiently small $\delta_k>0$ such that
\begin{align*}
   \underset{k\to\infty}{\limsup}\,F_{0}^{\gamma}(E_k^{\delta_{k}})&\le F_{0}^{\gamma}(E)\quad \text{and}\\
   \underset{k\to\infty}{\lim}\bignorm{\chi_{E_k^{\delta_{k}}}-\chi_E}_{L^1(\Omega)}&=0.
\end{align*}
This in turn allows us now to also choose $\eps_k>0$ small enough such that finally
\begin{align*}
    \underset{k\to\infty}{\limsup}\,F_{\eps_k}(\varphi_{\eps_k}^{k,\delta_{k}}) &\le F_{0}^{\gamma}(E)\quad\text{and}\\
    \underset{k\to\infty}{\lim}\bignorm{\varphi_{\eps_k}^{k,\delta_{k}}-\chi_E}_{L^1(\Omega)}&=0.
\end{align*}
Thus, the proof is complete.
\end{proof}

\begin{proof}[Proof of Theorem~\ref{MainGamma}]
    We have $\Phi_m\subset H_0^1(\Omega;[0,1])$ and $\Phi_m^0\subset BV(\Omega;\{0,1\})$ and we know that the volume constraint is preserved under $L^1$-convergence. Hence, the $\liminf$ inequality is a direct consequence of Theorem~\ref{GammaOwen} as now there are less admissible sequences.
    
    It remains to prove the $\limsup$ inequality---more precisely, that for every $\varphi\in \Phi_m^0$ there exists a sequence $(\tilde\varphi_{\eps})_{\eps>0}\subset \Phi_m$ fulfilling
    \begin{align}
        \limse \norm{\tilde\varphi_{\eps}-\varphi}_{L^1(\Omega)}&=0,\label{L1Phi}\\
        \limsupse J^{\eps}_{\gamma}(\tilde\varphi_{\eps})&\le J^0_{\gamma}(\varphi)\label{LimSupF}.
    \end{align}
    For any $\varphi\in \Phi_m^0$, a recovery sequence $(\varphi_{\eps})_{\eps>0}\subset H^1_0(\Omega;[0,1])$ for the functional $F_{\eps}^{\gamma}$ was
    constructed in Theorem~\ref{GammaOwen}. It was shown that this recovery sequence converges in $L^1(\Omega)$ and fulfills the $\limsup$ inequality for $F_{\eps}^{\gamma}$. 
    Now, our goal is to carefully modify this recovery sequence such that it preserves these properties but additionally fulfills the mean value constraint.
    For this modification, we proceed completely analogously to the proof of \cite[Theorem~4.8]{GHKK}. 
    Therefore, we only briefly sketch the most important steps.
    Since $\varphi\in \Phi_m^0$, it is non-constant. Hence, we can find a function $\B{\xi}\in C_0^1(\Omega,\R^n)$ such that
    \begin{align*}
        \int_{\Omega}\varphi\nabla\cdot \B{\xi}\dLL^n>0.
    \end{align*}
    For any $s\in\R$, we define the function
    \begin{align*}
        T_{s}:\R^n &\to\R^n,\\
        x&\mapsto x+s\B{\xi}(x),
    \end{align*}
    which is a $C^1$-diffeomorphism if $s$ is sufficiently small.
    By means of the implicit function theorem, we deduce that for any sufficiently small $\eps>0$, there exists $s(\eps)\in \R$ such that
    \begin{align*}
        \tilde{\varphi}_{\eps}\coloneqq \varphi_{\eps}\circ T_{s(\eps)}^{-1} \in \Phi_m,
    \end{align*}
    and $s(\eps)\to 0$ as $\eps\to 0$.
    In particular, the property $\tilde{\varphi}_{\eps}\in H_0^1(\Omega)$ holds since for $x$ close to $ \partial\Omega$, we have $T_{s(\eps)}(x)=x$ due to the fact that $\B{\xi}$ has compact support in $\Omega$.
    Eventually, we show that the sequence $(\tilde\varphi_{\eps})_{\eps>0}$ satisfies the properties \eqref{L1Phi} and \eqref{LimSupF} and thus, it is a suitable recovery sequence.
\end{proof}

\begin{proof}[Proof of Theorem~\ref{THM:DifInt}]
    In the proof of Theorem~\ref{MainGamma}, for any admissible $\varphi\in \Phi_m^0$, we have constructed a recovery sequence $(\tilde\varphi_{\eps})_{\eps>0}$. In particular, this implies that the cost functional $J_\gamma^\eps$ is bounded uniformly in $\eps$ along any sequence $(\varphi_\eps)_{\eps>0}$ of minimizers to the optimization problem \eqref{OP}. Consequently, there exists a constant $C>0$ independent of $\eps$ and $\gamma$ such that
    \begin{align}\label{EST:PsiEps}
        \int_{\Omega}\psi(\varphi_\eps)\dLL^n
        \le \eps \EE^\eps(\varphi_\eps)
        \le \frac\eps\gamma J_\gamma^\eps(\varphi_\eps)
        \le \frac{C\eps}{\gamma},
    \end{align}
    for all $\eps>0$. 
    Note that here the constant $C>0$ is universal in the sense that it is independent of the sequence of minimizers $(\varphi_\eps)_{\eps > 0}$ because the sequence $(J^\eps_{\gamma}(\varphi_\eps))_{\eps >0}$ is bounded independently of that choice.
    Recalling that $\psi\in C^2([0,1])$ is non-negative,
    we thus have
    \begin{align}\label{EST:PsiEps2}
        &\Big(\,\underset{[\delta,1-\delta]}{\min}\;\psi\,\Big) \; \LL^n\left(\left\{\delta\le \varphi_\eps\le 1-\delta\right\}\right)
        \le
            \int_{\left\{\delta\le \varphi_\eps\le 1-\delta\right\}}\psi(\varphi_\eps)\dLL^n
        \notag\\
        &\quad \le
            \int_{\Omega}\psi(\varphi_\eps)\dLL^n
        \le 
            \frac{C\eps}{\gamma},.
    \end{align}
    Since $\psi>0$ on $[\delta,1-\delta]$ due to \ref{ASS:A:1}, the assertion directly follows. 
\end{proof}

\begin{proof}[Proof of Theorem \ref{THM:EpsCon}]
We apply the compactness of the Ginzburg Landau energy of \cite[Proposition 3(a)]{Modica} which only relies on the fact that
\begin{align*}
\Psi:[0,1]\to [0,c_0], \quad
    s\mapsto \int_{0}^s\sqrt{2\psi(t)}\text{\,d}t,
\end{align*}
is invertible. This is true since due to \ref{ASS:A:1}, we have $\psi>0$ in $(0,1)$.
Consequently, there exists a function $\varphi_0\in BV(\Omega,\left\{0,1\right\})$ such that
\begin{align}
    \label{CONV:L1}
    \limse\norm{\varphi_{\eps}-\varphi_0}_{L^1(\Omega)}=0
\end{align}
along a non-relabeled subsequence of $(\varphi_{\eps})_{\eps>0}$.
We further recall that $\varphi_\eps = \varphi_\eps^{\ast}$ according to Theorem~\ref{THM:MIN}. 
Hence, the non-expansivity of the rearrangement (see Lemma \ref{LEM:SDR}\ref{item:f* nonexpansive}) yields
\begin{align*}
    \int_{\Omega}\abs{\varphi_0-\varphi_{0}^{\ast}}\dLL^n
    &\le
    \int_{\Omega}\abs{\varphi_0-\varphi_\eps}\dLL^n
        + \int_{\Omega}\abs{\varphi_\eps^{\ast}-\varphi_{0}^{\ast}}\dLL^n
    \\
    &\le
    2 \int_{\Omega}\abs{\varphi_\eps-\varphi_{0}}\dLL^n.
\end{align*}%
Hence, we infer $\varphi_0=\varphi_0^{\ast}$ almost everywhere in $\Omega$. As the mean value is preserved under $L^1$-convergence this is already enough to deduce that $\varphi_0$ is the characteristic function of the ball centered at the origin with volume $m\abs{\Omega}$.
Obviously the limit $\varphi_0$ does not depend on the choice of the subsequence of $(\varphi_{\eps})_{\eps>0}$. Hence, the convergence \eqref{CONV:L1} even holds for the \textit{whole} sequence.

Eventually, using Theorem~\ref{MainGamma} which states that $J^{\eps}\overset{\Gamma}{\to}J^0$, we conclude that $\varphi_0$ is a minimizer of $J^0_\gamma$ as $\Gamma$-convergence implies the convergence of minimizers.
\end{proof}

\section*{Acknowledgments}
    Paul Hüttl and Patrik Knopf gratefully acknowledge the support by the Graduiertenkolleg 2339 IntComSin of the Deutsche Forschungsgemeinschaft (DFG, German Research Foundation) – Project-ID 321821685.
	Tim Laux has received funding from the Deutsche Forschungsgemeinschaft (DFG, German Research Foundation) under Germany's Excellence Strategy -- EXC-2047/1 -- 390685813.


\footnotesize\setlength{\parskip}{0cm}
\renewcommand{\sc}{\scshape}
\bibliographystyle{siam}
\bibliography{HKL}

\begin{thebibliography}{10}

\bibitem{Alt}
{\sc H.~Alt}, {\em Linear Functional Analysis}, Universitext, Springer-Verlag London, London, 2016.

\bibitem{AltCafarelli}
{\sc H.~W. Alt and L.~A. Caffarelli}, {\em Existence and regularity for a minimum problem with free boundary}, J. Reine Angew. Math., 325 (1981), pp.~105--144.

\bibitem{Ambrosio}
{\sc L.~Ambrosio and G.~Buttazzo}, {\em An optimal design problem with perimeter penalization}, Calc. Var. Partial Differential Equations, 1 (1993), pp.~55--69.

\bibitem{AFP}
{\sc L.~Ambrosio, N.~Fusco, and D.~Pallara}, {\em Functions of bounded variation and free discontinuity problems}, Oxford Mathematical Monographs, The Clarendon Press, Oxford University Press, New York, 2000.

\bibitem{BloweyElliott}
{\sc J.~F. Blowey and C.~M. Elliott}, {\em The {C}ahn--{H}illiard gradient theory for phase separation with nonsmooth free energy. {I}. {M}athematical analysis}, European J. Appl. Math., 2 (1991), pp.~233--280.

\bibitem{bogosel2022polygonal}
{\sc B.~Bogosel and D.~Bucur}, {\em On the polygonal {F}aber-{K}rahn inequality}, J. \'{E}c. polytech. Math., 11 (2024), pp.~19--105.

\bibitem{BogoselOudet}
{\sc B.~Bogosel and E.~Oudet}, {\em Qualitative and numerical analysis of a spectral problem with perimeter constraint}, SIAM J. Control Optim., 54 (2016), pp.~317--340.

\bibitem{BourdinChambolle}
{\sc B.~Bourdin and A.~Chambolle}, {\em Design-dependent loads in topology optimization}, ESAIM Control Optim. Calc. Var., 9 (2003), pp.~19--48.

\bibitem{DePhilippis}
{\sc L.~Brasco, G.~De~Philippis, and B.~Velichkov}, {\em Faber--{K}rahn inequalities in sharp quantitative form}, Duke Math. J., 164 (2015), pp.~1777--1831.

\bibitem{Brothers-Ziemer}
{\sc J.~Brothers and W.~Ziemer}, {\em Minimal rearrangements of {S}obolev functions}, J. Reine Angew. Math., 384 (1988), pp.~153--179.

\bibitem{Bucur}
{\sc D.~Bucur and G.~Buttazzo}, {\em Variational methods in shape optimization problems}, vol.~65 of Progress in Nonlinear Differential Equations and their Applications, Birkh\"{a}user Boston, Inc., Boston, MA, 2005.

\bibitem{bucur.buttazzo.ea:minimization}
{\sc D.~Bucur, G.~Buttazzo, and A.~Henrot}, {\em Minimization of {$\lambda_2(\Omega)$} with a perimeter constraint}, Indiana Univ. Math. J., 58 (2009), pp.~2709--2728.

\bibitem{BucurDaners}
{\sc D.~Bucur and D.~Daners}, {\em An alternative approach to the {F}aber-{K}rahn inequality for {R}obin problems}, Calc. Var. Partial Differential Equations, 37 (2010), pp.~75--86.

\bibitem{BucurFreitas}
{\sc D.~Bucur and P.~Freitas}, {\em A free boundary approach to the {F}aber-{K}rahn inequality}, in Geometric and computational spectral theory, vol.~700 of Contemp. Math., Amer. Math. Soc., Providence, RI, 2017, pp.~73--86.

\bibitem{BucurGiacomini}
{\sc D.~Bucur and A.~Giacomini}, {\em Faber-{K}rahn inequalities for the {R}obin-{L}aplacian: a free discontinuity approach}, Arch. Ration. Mech. Anal., 218 (2015), pp.~757--824.

\bibitem{BucurHenrot}
{\sc D.~Bucur and A.~Henrot}, {\em Maximization of the second non-trivial {N}eumann eigenvalue}, Acta Math., 222 (2019), pp.~337--361.

\bibitem{BucurMartinetOudet}
{\sc D.~Bucur, E.~Martinet, and E.~Oudet}, {\em Maximization of {N}eumann eigenvalues}, Arch. Ration. Mech. Anal., 247 (2023), pp.~Paper No. 19, 36.

\bibitem{BucurVelichkov}
{\sc D.~Bucur and B.~Velichkov}, {\em A free boundary approach to shape optimization problems}, Philos. Trans. Roy. Soc. A, 373 (2015).

\bibitem{Cicalese}
{\sc M.~Cicalese and G.~P. Leonardi}, {\em A selection principle for the sharp quantitative isoperimetric inequality}, Arch. Ration. Mech. Anal., 206 (2012), pp.~617--643.

\bibitem{Daners}
{\sc D.~Daners}, {\em A {F}aber-{K}rahn inequality for {R}obin problems in any space dimension}, Math. Ann., 335 (2006), pp.~767--785.

\bibitem{de-philippis.lamboley.ea:regularity}
{\sc G.~De~Philippis, J.~Lamboley, M.~Pierre, and B.~Velichkov}, {\em Regularity of minimizers of shape optimization problems involving perimeter}, J. Math. Pures Appl. (9), 109 (2018), pp.~147--181.

\bibitem{EvansGariepy}
{\sc L.~C. Evans and R.~F. Gariepy}, {\em Measure theory and fine properties of functions}, Textbooks in Mathematics, CRC Press, Boca Raton, FL, revised~ed., 2015.

\bibitem{Faber}
{\sc G.~Faber}, {\em Beweis, dass unter allen homogenen {M}embranen von gleicher {F}l{\"a}che und gleicher {S}pannung die kreisf{\"o}rmige den tiefsten {G}rundton gibt}, Sitzungsberichte der mathematisch-physikalischen Klasse der Bayrischen Akademie der Wissenschaft,  (1923).

\bibitem{FuscoII}
{\sc N.~Fusco, F.~Maggi, and A.~Pratelli}, {\em The sharp quantitative isoperimetric inequality}, Ann. of Math. (2), 168 (2008), pp.~941--980.

\bibitem{FuscoFK}
{\sc N.~Fusco, F.~Maggi, and A.~Pratelli}, {\em Stability estimates for certain {F}aber--{K}rahn, isocapacitary and {C}heeger inequalities}, Ann. Sc. Norm. Super. Pisa Cl. Sci. (5), 8 (2009), pp.~51--71.

\bibitem{GHKK}
{\sc H.~Garcke, P.~H\"{u}ttl, C.~Kahle, P.~Knopf, and T.~Laux}, {\em Phase-field methods for spectral shape and topology optimization}, ESAIM Control Optim. Calc. Var., 29 (2023), pp.~Paper No. 10, 57.

\bibitem{GHK}
{\sc H.~Garcke, P.~H\"uttl, and P.~Knopf}, {\em Shape and topology optimization including the eigenvalues of an elastic structure: a multi-phase-field approach}, Adv. Nonlinear Anal., 11 (2022), pp.~159--197.

\bibitem{Gilbarg-Trudinger}
{\sc D.~Gilbarg and N.~Trudinger}, {\em Elliptic Partial Differential Equations of Second Order}, Classics in Mathematics, Springer-Verlag, Berlin, 2001.
\newblock Reprint of the 1998 edition.

\bibitem{Henrot}
{\sc A.~Henrot}, {\em Extremum problems for eigenvalues of elliptic operators}, Frontiers in Mathematics, Birkh\"{a}user Verlag, Basel, 2006.

\bibitem{henrot.pierre:shape}
{\sc A.~Henrot and M.~Pierre}, {\em Shape variation and optimization}, vol.~28 of EMS Tracts in Mathematics, European Mathematical Society (EMS), Z\"{u}rich, 2018.
\newblock A geometrical analysis, English version of the French publication [MR2512810] with additions and updates.

\bibitem{Krahn}
{\sc E.~Krahn}, {\em \"{U}ber eine von {R}ayleigh formulierte {M}inimaleigenschaft des {K}reises}, Math. Ann., 94 (1925), pp.~97--100.

\bibitem{lieb-loss}
{\sc E.~Lieb and M.~Loss}, {\em Analysis}, vol.~14 of Graduate Studies in Mathematics, American Mathematical Society, Providence, RI, second~ed., 2001.

\bibitem{Maggi}
{\sc F.~Maggi}, {\em Sets of finite perimeter and geometric variational problems}, vol.~135 of Cambridge Studies in Advanced Mathematics, Cambridge University Press, Cambridge, 2012.
\newblock An introduction to geometric measure theory.

\bibitem{mazzoleni.pratelli:existence}
{\sc D.~Mazzoleni and A.~Pratelli}, {\em Existence of minimizers for spectral problems}, J. Math. Pures Appl. (9), 100 (2013), pp.~433--453.

\bibitem{Modica}
{\sc L.~Modica}, {\em The gradient theory of phase transitions and the minimal interface criterion}, Arch. Rational Mech. Anal., 98 (1987), pp.~123--142.

\bibitem{Modica-Mortola}
{\sc L.~Modica and S.~Mortola}, {\em Un esempio di {G}amma-convergenza}, Boll. Un. Mat. Ital. B (5), 14 (1977), pp.~285--299.

\bibitem{Owen}
{\sc N.~C. Owen, J.~Rubinstein, and P.~Sternberg}, {\em Minimizers and gradient flows for singularly perturbed bi-stable potentials with a {D}irichlet condition}, Proc. Roy. Soc. London Ser. A, 429 (1990), pp.~505--532.

\bibitem{PS}
{\sc G.~P\'{o}lya and G.~Szeg\"{o}}, {\em Isoperimetric {I}nequalities in {M}athematical {P}hysics}, Annals of Mathematics Studies, No. 27, Princeton University Press, Princeton, N. J., 1951.

\bibitem{Rayleigh}
{\sc J.~W.~S. Rayleigh}, {\em The {T}heory of {S}ound}, Dover Publications, New York, N. Y., 1877.

\bibitem{Rindler}
{\sc F.~Rindler}, {\em Calculus of variations}, Universitext, Springer, Cham, 2018.

\bibitem{Sternberg}
{\sc P.~Sternberg}, {\em The effect of singular perturbation on nonconvex variational problems}, Archive for Rational Mechanics and Analysis, 101 (1988).

\bibitem{Szego}
{\sc G.~Szeg\"{o}}, {\em Inequalities for certain eigenvalues of a membrane of given area}, J. Rational Mech. Anal., 3 (1954), pp.~343--356.

\bibitem{Weinberger}
{\sc H.~F. Weinberger}, {\em An isoperimetric inequality for the {$N$}-dimensional free membrane problem}, J. Rational Mech. Anal., 5 (1956), pp.~633--636.

\end{thebibliography}

\end{document}